\newtheorem{thm}{Theorem}
\newtheorem{lemma}{Lemma}
\newtheorem{pro}{Proposition}
\newtheorem{cor}{Corollary}
\theoremstyle{definition}
\newtheorem{defin}{Definition}
\newtheorem{remark}{Remark}
\newcommand{\rrvert}{\vert}
\newcommand{\llvert}{\vert}
\def\nn{\nonumber}
\def\cadlag{{c\`adl\`ag}~}
\def\R{\mathbb{R}}
\def\P{\mathbb{P}}
\def\Fc{{\cal F}}
\def\t{\tau}
\def\Sum{\sum}
\def\Sup{\sup}
\def\Lim{\lim}
\def\N{\mathbb{N}}
\def\M{\mathbb{M}}
\def\E{\mathbb{E}}
\def\P{\mathbb{P}}
\def\L{\mathbb{L}}
\def\t{\tau}
\def\1{\mathbh{1}}
\def\cadlag{{c\`adl\`ag}~}
\def\E{\mathbb{E}}
\def\cB{{\cal B}}
\def\cC{{\cal C}}
\def\cF{{\cal F}}
\def\cH{{\cal H}}
\def\cL{{\cal L}}
\def\cM{{\cal M}}
\def\cP{{\cal P}}
\def\cS{{\cal S}}
\def\cT{{\cal T}}
\def\cU{{\cal U}}
\def\cV{{\cal V}}
\begin{document}
\begin{frontmatter}

\title{$\boldsymbol{\L^p(p\ge2)}$-solutions of generalized BSDEs with jumps and monotone generator in a general filtration}
\author[a]{\inits{M.}\fnm{M'hamed}\snm{Eddahbi}}\email{m.eddahbi@uca.ma}
\author[a]{\inits{I.}\fnm{Imade}\snm{Fakhouri}\corref{cor1}}\email{imadefakhouri@gmail.com}
\cortext[cor1]{Corresponding author.}
\author[a,b]{\inits{Y.}\fnm{Youssef}\snm{Ouknine}}\email{ouknine@uca.ac.ma}
\address[a]{Cadi Ayyad University, Av. Abdelkrim Khattabi, 40000,~Gu\'eliz--Marrakesh,~Morocco}
\address[b]{Hassan II Academy of Sciences and Technology, Morocco}

\markboth{M. Eddahbi et al.}{$\L^p(p\ge2)$-solutions of GBSDEs with
jumps in a general filtration}

\begin{abstract}
In this paper, we study multidimensional generalized BSDEs that have a
monotone generator in a general filtration supporting a Brownian motion
and an independent Poisson random measure. First, we prove the
existence and uniqueness of $\L^p(p\ge2)$-solutions in the case of a
fixed terminal time under suitable $p$-integrability conditions on the
data. Then, we extend these results to the case of a random terminal
time. Furthermore, we provide a comparison result in dimension~$1$.
\end{abstract}

\begin{keywords}
\kwd{Generalized backward stochastic differential equations (GBSDEs) with
jumps}
\kwd{$\L^p$ solution}
\kwd{monotone generator}
\kwd{comparison theorem}
\end{keywords}
\begin{keywords}[2010]
\kwd{60H10}
\kwd{60H20}
\kwd{60F25}
\end{keywords}

\received{28 April 2016}
%
\revised{18 January 2017}
%
\accepted{19 January 2017}
\publishedonline{7 February 2017}
\end{frontmatter}

\section{Introduction}
This paper is concerned with the study of multidimensional generalized
backward stochastic differential equations (GBSDEs) with jumps in a
general filtration. For convenience of the discussion, let us first
make precise the notion of such equations, which is adopted from \cite
{KP14}.

Let $T>0$ be a fixed time horizon and consider a filtered probability
space $(\varOmega,\Fc,(\Fc_t)_{t\le T},\P)$ carrying a standard
$d$-dimensional Brownian motion $W$ and an independent compensated
Poisson random measure $\widehat\pi$. The filtration
$(\Fc_t)_{0\le t\le T}$ is assumed to be complete and right continuous.
Assume that we are given an $\mathbb{R}^k$-valued $\cF_T$-measurable random
variable $\xi$, a random function $f:\varOmega\times[0,T]\times\R
^k\times\R^{k\times d}\times\cL_\lambda^2\longrightarrow\R^k$
(see Section~2 for the definition of $\cL_{\lambda}^2$) such that
$f(\cdot,y,z,v)$ is $(\cF_t)$-progressively measurable for each
$(y,z,v)$, and an $(\cF_t)_{t\ge0}$-progressively measurable \cadlag
finite-variation process $(R_t)_{t\in[0,T]}$ such that $R_0=0$.
Roughly speaking, solving a GBSDE with jumps in a general filtration
with terminal time $T$ associated with terminal condition $\xi$ and
generator $f+dR$ amounts to finding the usual triple $(Y_t,Z_t,V_t)$
(with $Y$ adapted and $Z$ and $V$ predictable) and a \cadlag martingale
$M=(M_t)_{t\in[0,T]}$ that is orthogonal to $W$ and $\widehat\pi$
(see Lemma~\ref{loc-mart-decomposition}) such that the following
equation is satisfied~$\P$-a.s.:
\begin{align}
\label{RBSDE-jumps}
Y_t&=\xi+\int^T_tf(s,Y_s,Z_s,V_s)ds+\int_t^TdR_s -\int^T_tZ_s dW_s\nn\\
&\quad -\int_{t}^{T}\int_UV_{s}(e)\widehat\pi(de,ds)-\int_t^TdM_s, \quad t\in[0,T].
\end{align}
This equation is usually denoted by GBSDE$(\xi,f+dR)$. Note that the
reason behind adding the martingale $M$ to the definition of GBSDE
\eqref{RBSDE-jumps} is the fact that we do not assume that the
underlying filtration is generated by $W$ and~$\widehat\pi$, and in
such cases, the martingale representation property may fail.

Nonlinear BSDEs with jumps (i.e., the underlying filtration is
generated by a Brownian motion and an independent Poisson random
measure) were first introduced by Tang and Li \cite{TL94}. They proved
the existence and uniqueness of the solution under a Lipschitz
continuity condition on the generator w.r.t.\ the variables. Since
then, a lot of papers (see, e.g., \cite{P97,BBP97,Royer06,Yao10,QS13,Liwei14,KP14},
and the references therein) and books (see, e.g., \cite{Situ05} and
\cite{Delong13}) studied BSDEs with jumps due to the connections of
this subject with mathematical finance (see, e.g., \cite{Delong13})
(e.g., if the Brownian motion stands for the noise from a financial
market, then the Poisson random measure can be interpreted as the
randomness of the insurance claims), stochastic control (see, e.g.,
\cite{KP2015}), and partial differential equations (see, e.g., \cite
{BBP97}), and so on. Since the work of Tang and Li \cite{TL94}, the
attempts of generalization of their results have been made in several
different directions. First of all, many papers aimed at relaxing the
Lipschitz condition on the generator w.r.t.\ $y$. For example, Pardoux
\cite{P97} considered a monotonicity condition on the generator w.r.t.
$y$ and a linear growth condition on $y$. Some efforts were devoted to
weaken the square integrability on the coefficients, for example, $ \E
 [|\xi|^{2}+\int_0^T|f(t,0,0,0)|^2dt ]<+\infty$.
Yao \cite{Yao10} gave the existence and uniqueness results for $\L
^p$-solutions ($p>1$) for BSDEs with jumps for a monotonic generator
(not the same monotonicity condition considered in our paper) and $\L
^p$ coefficients. Later, Li and Wei \cite{Liwei14} analyzed fully
coupled BSDEs with jumps and showed the existence and uniqueness of
\hbox{$\L^p$-solutions} ($p\ge2$) for such equations for a monotone
generator and $p$-integrable data.
Further, other settings of BSDEs with jumps have been introduced.
Pardoux \cite{P97} studied a class of BSDEs with jumps called
generalized BSDEs with jumps that involves an integral w.r.t. an
increasing continuous process. The author shows the uniqueness and
existence of the GBSDE with generator monotone in
$y$ and square-integrable data.

Recently, Kruse and Popier \cite{KP14} considered another direction of
generalization concerning the underlying filtration, which is no longer
assumed to be generated by $W$ and $\widehat\pi$. In fact, they
studied multidimensional BSDEs in a~general filtration of type \eqref
{RBSDE-jumps} with $R\equiv0$.
The authors established the existence and uniqueness of $\L
^p$-solutions ($p>1$) 
under a monotonicity assumption on the generator $f$ w.r.t. $y$ 
and under the condition that the data $\xi$
and $f(t, 0, 0, 0)$ are in $\L^p$, $p > 1$, that is,
%
\begin{equation}
\label{Lp-integrability-xi-f} \E \Biggl[|\xi|^{p}+\int_0^T\big|f(t,0,0,0)\big|^p
dt \Biggr]<+\infty.
\end{equation}
Moreover, they also considered the case of a random terminal time that
is not necessarily bounded.

In our paper, we first investigate the existence and uniqueness of $\L
^p(p\ge2)$-solutions (see Definition~\ref
{Lp-solution-GBSDE-definition}) for GBSDEs \eqref{RBSDE-jumps}
in a deterministic time horizon. We suppose that $f$ is monotonic
w.r.t. $y$ (this condition is essential in the study of BSDEs with
random terminal time), Lipschitz continuous w.r.t. to $z$ and $v$, and
satisfies a very general growth condition w.r.t. $y$ considered earlier
in the Brownian setting in \cite{briandetal03} and recently in the
case of jumps in \cite{KP14}:
%
\begin{equation}
\label{H4-introduction} \forall r>0, \quad\Sup_{|y|\le r}\big|f(\cdot,y,0,0)-f(
\cdot,0,0,0)\big| \in\L^1\bigl(\varOmega\times[0,T]\bigr).
\end{equation}
This condition seems to be the best possible growth condition on $f$
w.r.t.~$y$ and is widely used in the theory of partial differential
equations (see \cite{Benilanetal95} and the references therein).

Concerning the data, we assume that a $p$-integrability condition is
satisfied (see assumption $(H1)$). Moreover, under an additional
assumption on the jump component of $f$ (see $(H6')$ in Section~\ref{section-CT}), we provide a comparison principle in dimension one (see
the counterexample in \cite{BBP97}).
Then, we extend the results obtained in the case of deterministic time
horizon to the case of a random terminal time that is not necessarily bounded.

Let us highlight the main contribution of the paper compared to the
existing literature.
On the one hand, our results extend the work of Kruse and Popier \cite
{KP14} to the case of generalized BSDEs. Furthermore, we strengthen
their results even in the case $R=0$ since our $p$-integrability
condition on $f(t,0,0,0)$ (see assumption $(H1)$) is weaker than the
$\L^p$-integrability \eqref{Lp-integrability-xi-f} assumed in their
paper. It should be mentioned that, due to the $p$-integrability
assumed on $f(t,0,0,0)$ and also to the process $dR$, some difficulties
arise. Indeed, as in \cite{briandetal03} and \cite{KP14}, to study
the $\L^p$-solutions, a result on the existence and uniqueness in the
classical $\L^2$ case (see Theorem \ref{existence-uniqueness-in-L2-multi-case}) is first needed. To obtain
such a~result, the main trick is to truncate the coefficients with
suitable truncation functions in order to have a bounded solution $Y$,
which is a key tool in the proof in the $\L^2$ case (see \cite
[Prop.~2.4 and Thm.~2.2]{P99}, Proposition 2.2 in \cite{BrCar2000}
used in \cite[Thm.~4.2]{briandetal03} and \cite[Lemma 4 and Thm.~1]{KP14}).
The approach followed in the papers mentioned to get this important
estimate fails in our context. This is the reason why we give
nonstandard estimates in Lemma~\ref{Lemma9}, which allow us to
overcome this problem.

On the other hand, we generalize the work of Pardoux \cite{P97} to the
situation of a general filtration. Moreover, even in the case of a
Wiener--Poisson filtration (filtration generated by $W$ and $\pi$),
compared to \cite{P97}, we weaken the growth condition on $f$ w.r.t.
$y$ stated in assumption \eqref{H4-introduction}, instead of the
linear growth condition on the variable~$y$, and derive the existence
and uniqueness of $\L^p$-solutions of our GBSDE, whereas only the
classical $\L^2$-solutions were studied in \cite{P97}. Note also
that, in our case, GBSDE involves an integral w.r.t.\ a
finite-variation \cadlag process unlike \cite{P97}, where an integral
w.r.t. a continuous increasing process is considered instead.

Our main motivation for writing this paper is because it is a first
step in the study of our future work on the existence and uniqueness of
$\L^p$-solutions for reflected GBSDEs. Note that since we solve that
problem using a penalization method, the comparison principle obtained
here is primordial. 
Finally, to the best of our knowledge, there is no such result in the
literature.

The rest of the paper is organized as follows: in the following
section, we give the mathematical setting
of this paper and some basic identities. In Section $3$, we study the
existence and uniqueness of $\L^p$-solutions
on a fixed time interval, which is done in three parts. First, we study
the classical case of $\L^2$-solutions. The proof method follows the arguments
and techniques (convolution, weak convergence, truncation technique)
given in \cite{KR13,P97,P99,briandetal03,KP14} with obvious modifications. Then, in the remaining
parts, we extend the result
to $\L^p$-solutions for any $p\ge2$, using the right a priori
estimate, the $\L^2$ case result, and a truncation technique. In
Section $4$, a comparison
principle for GBSDEs with jumps in dimension $1$ is provided. Finally,
Section $5$ is devoted to the case of a random terminal time.

\section{Preliminaries}
Throughout this paper, $T>0$ is a fixed time horizon, $(\varOmega,\Fc
,(\Fc_t)_{t\le T},\P)$ is a filtered probability space.
The filtration $(\Fc_t, 0\le t\le T)$ is assumed to be complete and
right continuous.
We suppose that $(\varOmega,\Fc,(\Fc_t)_{t\le T},\P)$ supports a
$d$-dimensional Wiener process $(W_t, 0\le t\le T)$ and a random
Poisson measure $\pi$ on $\R^+\times U$, where $U:=\R^n\backslash\{
0\}$ is equipped with its Borel field $\cal U$, with the compensator
$\nu(dt,de)=dt\lambda(de)$ such that $\{\widehat{\pi}([0,t]\times
A)=(\pi-\nu)([0,t]\times A)\}_{t\le T}$ is a martingale for all $A\in
\cal U$ satisfying $\lambda(A)<+\infty$. Here, $\lambda$ is assumed
to be a $\sigma$-finite L\'evy measure on $(U,\cal U)$ such that
\[
\int_{U}\bigl(1\wedge|e|^2\bigr)\lambda(de)<
\infty.
\]

Let \textbf{$\cP$} denote the $\sigma$-algebra of predictable sets
on $\varOmega\times[0,T]$, and let us introduce the following notation:
\begin{itemize}
\item$\cS$ is the set of all adapted \cadlag processes.
\item$G_{\mathit{loc}}(\pi)$ is the set of $\cP\times\cU$-measurable
functions $V$ on $\varOmega\times[0,T] \times U$ such that, for any $t
\geq0$,
\[
\int_0^t \int_U
\bigl(\big|V_s(e)\big|^2\wedge\big|V_s(e)\big|\bigr)
\lambda(de) (ds) < +\infty \quad\mbox{a.s.}
\]
\item$\cH$ (resp. $\cH(0,T)$) is the set of all predictable
processes on $\R^+$ (resp. on $[0,T]$). $\L^2_{\mathit{loc}}(W)$ is the
subspace of $\cH$ such that, for any $t\geq0$,
\[
\int_0^t |Z_s|^2 ds <
+\infty\quad\mbox{a.s.}
\]
\item\textbf{$\M_{\mathit{loc}}$} is the set of \cadlag local
martingales orthogonal to $W$ and $\widehat{\pi}$. If
$M\in\M_{\mathit{loc}}$, then
%
\begin{equation}
\bigl[M,W^i\bigr]_t=0, \ 1\le i\le d, \quad\mbox{and}\quad \bigl[M,\widehat\pi (A,\cdot)\bigr]_t\xch{=0,}{=0}
\end{equation}
for all $A\in\cU$. In other words, $\E(\Delta M*\pi\mid\cP\otimes
\cU)=0$, where the product $*$ denotes the integral process (see
II.1.5 in \cite{JS03}).
\item\textbf{$\M$} is the subspace of $\M_{\mathit{loc}}$ of martingales.
\item$\cV$ 
is the set of all \cadlag
progressively measurable processes $R$
of finite variation such that $R_0=0$.
\end{itemize}
For a given process $R\in\cV$, we denote by $|R|_t$ the variation of
$R$ on $[0,t]$ and by $dR$ the random measure generated by its trajectories.
By $\cT$ we denote the set of all stopping times with values in
$[0,T]$ and by $\cT_t$ the set of all
stopping times with values in $[t,T]$. We say that a sequence $(\t
_k)_{k\in\N}\subset T$ is stationary if
$\P(\liminf_{k\to+\infty}\{\t_k=T\})=1$. For $X\in\cS$, we set
$X_{t-}=\Lim_{s\nearrow t}X_s$ and $\Delta X_t=X_t-X_{t-}$ with the
convention that $X_{0-}=0$.

Now, since we are dealing with a general filtration, we recall Lemma
III.4.24 in \cite{JS03}, which gives the representation property
of a local martingale in our context.
\begin{lemma} \label{loc-mart-decomposition}
Every local martingale has a decomposition
\[
\int_0^. Z_s dW_s + \int
_0^. \int_\cU V_s(e)
\widehat{\pi}(de,ds) + M,
\]
where $M \in\M_{\mathit{loc}}$, $Z \in\L^2_{\mathit{loc}}(W)$, and $V \in G_{\mathit{loc}}(\pi)$.
\end{lemma}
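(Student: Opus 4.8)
The plan is to derive this from the general structure theory of local martingales (Jacod--Shiryaev \cite{JS03}, Chapter~III), so I will only describe the mechanism producing the three summands. Let $N$ be a local martingale; subtracting the $\cF_0$-measurable random variable $N_0$, which may be absorbed into $M$, we may assume $N_0=0$. The first step is the canonical decomposition $N=N^c+N^d$ into the continuous local-martingale part $N^c$ and the purely discontinuous part $N^d$: the Brownian integral will be extracted from $N^c$ and the Poisson integral from $N^d$.

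For $N^c$, by the Kunita--Watanabe inequality each predictable covariation $\langle N^c,W^i\rangle$ is of finite variation and dominated by $\langle N^c\rangle^{1/2}\langle W^i\rangle^{1/2}$; since $\langle W^i\rangle_t=t$, it is absolutely continuous in $t$, so there is a predictable process $Z$ with $\langle N^c,W^i\rangle_t=\int_0^t Z^i_s\,ds$ for each $i$. After localizing along a stationary sequence of stopping times one gets $\int_0^t|Z_s|^2\,ds<\infty$ a.s., i.e.\ $Z\in\L^2_{\mathit{loc}}(W)$. Then $L^c:=N^c-\int_0^\cdot Z_s\,dW_s$ is a continuous local martingale with $\langle L^c,W^i\rangle\equiv0$ for all $i$, hence orthogonal to $W$; being continuous, $L^c$ is also orthogonal to $\widehat\pi$.

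For $N^d$, I would apply the theory of compensators of integer-valued random measures to the jump measure $\mu^N$ of $N$ and to $\pi$ (\cite{JS03}, Chapter~II). Taking the predictable projection of $\Delta N$ relative to the atoms of $\pi$ produces a $\cP\times\cU$-measurable function $V$ with $\int_0^t\int_U(|V_s(e)|^2\wedge|V_s(e)|)\lambda(de)\,ds<\infty$ a.s., i.e.\ $V\in G_{\mathit{loc}}(\pi)$, such that $L^d:=N^d-\int_0^\cdot\int_U V_s(e)\,\widehat\pi(de,ds)$ is a purely discontinuous local martingale with $[L^d,\widehat\pi(A,\cdot)]\equiv0$ for every $A\in\cU$ with $\lambda(A)<\infty$, equivalently $\E(\Delta L^d*\pi\mid\cP\otimes\cU)=0$; since $W$ is continuous, $[L^d,W^i]\equiv0$ as well. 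Setting $M:=L^c+L^d$ gives $[M,W^i]=[M,\widehat\pi(A,\cdot)]=0$ for all $i$ and all admissible $A$, so $M\in\M_{\mathit{loc}}$, and
\[
N=\int_0^\cdot Z_s\,dW_s+\int_0^\cdot\int_U V_s(e)\,\widehat\pi(de,ds)+M .
\]

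The delicate point is the jump part: unlike $Z$, which is read off directly from the absolute continuity of $\langle N^c,W^i\rangle$, the construction of $V$ and the verification that $V\in G_{\mathit{loc}}(\pi)$ with $L^d$ orthogonal to $\widehat\pi$ rely on the full structure theorem for compensators of integer-valued random measures. Since this is precisely the content of \cite[Lemma III.4.24]{JS03}, I would invoke that result at this level of generality rather than reprove it.
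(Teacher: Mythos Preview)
Your approach matches the paper's exactly: the paper does not prove this lemma at all but simply recalls it as Lemma~III.4.24 of \cite{JS03}, which is precisely the reference you end up invoking. Your sketch of the underlying mechanism (splitting into continuous and purely discontinuous parts, extracting $Z$ via the Kunita--Watanabe argument and $V$ via the compensator theory for integer-valued random measures) is a correct outline of that result, so there is nothing to add.
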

The Euclidean norm of a vector $y\in\R^k$ will be defined by
$|y|=\sqrt{\sum_{i=1}^k|y_i|^2}$, and for any $k\times d$ matrix $z$,
we define $|z|=\sqrt{\operatorname{Trace}(zz^t)}$, where $z^t$ stands for the
transpose of $z$. The quadratic variation of a martingale $M\in\R^k$
is defined by $[M]_t=\Sum_{i=1}^k[M^i]_t$. By $[M]^c$ we denote the
continuous part of the quadratic variation $[M]$. Let us introduce the
following spaces of processes for any real constant $p\ge2$:
\begin{itemize}
\item\textbf{$\L^p$} is the space of $\mathbb{R}^k$-valued random
variables $\xi$ such that
\[
\|\xi\|_{\L^p}:=E \bigl[|\xi|^{p} \bigr] ^{1/p}<+
\infty.
\]

\item\textbf{$\cS^p$} is the space of $\mathbb{R}^k$-valued, $\cF
_t$-adapted, and \cadlag
processes $ ( Y_{t} ) _{0\leq t\leq T}$ such that
\begin{equation*}
\|Y\|_{\cS^p}:=E \Bigl[ \sup_{0\leq t\leq T}|Y_{t}|^{p}
\Bigr]^{1/p} <+\infty.
\end{equation*}

\item\textbf{$\M^{p}$} is the set of all $\mathbb R^k$-valued
martingales $M\in\M$ such that $\E([M]_T)^\frac{p}{2}<+\infty$.
\item\textbf{$\cM^{p}$} is the set of $\mathbb{R}^{k\times
d}$-valued and
$\cF$-progressively measurable processes\break $(Z_{t})_{0\leq t\leq T}$
such that
\begin{equation*}
\|Z\|_{\mathcal{M}^{p}}:=E \Biggl[ \Biggl(\int_{0}^{T}|Z_{s}|^{2}ds
\Biggr)^\frac{p} {2} \Biggr]^{1/p} <+\infty.
\end{equation*}

\item\textbf{$\cL^p$} is the set of $\cP\otimes\cU$-measurable
mappings $V:\varOmega\times[0,T]\times U\rightarrow\R^k$
such that
\begin{equation*}
\big\|V(e)\big\|_{\mathcal{L}^{p}}:=E \Biggl[ \Biggl(\int_{0}^{T}
\int_U\big|V_{s}(e)\big|^{2}\lambda(de) ds
\Biggr)^\frac{p} {2} \Biggr] ^{1/p}<+\infty.
\end{equation*}

\item\textbf{$\cL_\lambda^p$} is the set of measurable functions
$\phi:U\rightarrow\R^k$ such that
\begin{equation*}
\big\|\phi(e)\big\|_{\mathcal{L}_\lambda^{p}}:= \biggl(\int_U\big|\phi
(e)\big|^{p}\lambda(de) \biggr)^{1/p}<+\infty.
\end{equation*}

\item$\varXi^p$ is the space $\cS^p\times\cM^p\times\cL^p\times\M^p$.

\item$\cV^p$ is the set of all processes $R\in\cV$ such that
$\|R\|_{\cV^{p}}:=\E (|R|^p_T )^{1/p}<\infty$, where
$|R|_T$ denotes the total variation of $R$ on $[0,T]$.
\end{itemize}
In what follows, let $\xi$ be an $\R^k$-valued and $\cF
_T$-measurable random variable, and let $R$ be a process in $\cV$.
Finally, let us consider
a random function $f:[0,T]\times\varOmega\times\R^k\times\R^{k\times
d}\times\cL_\lambda^2\longrightarrow\R^k$
measurable with respect to $\cP\otimes{\cB}
(\R^k)\otimes{\cB}(\R^{k\times d})\otimes\cB(\cL^2_\lambda)$.
In the paper, we consider the following hypotheses:
\begin{enumerate}[$(H1)$]
\item[$(H1)$]$\E [|\xi|^{p}+ (\int_0^T|f(t,0,0,0)|dt )^p+|R|^p_T ]<+\infty$.
\item[$(H2)$] For every $(t,z,v)\in[0,T]\times\R^{k\times d}\times
\cL_\lambda^2$, the mapping $y\in\R^k \to f(t,y,z,v)$ is continuous.
\item[$(H3)$] There exists $\mu\in\R$ such that
\begin{equation}
\nn \bigl(f(t,y,z,v)-f\bigl(t,y',z,v\bigr)\bigr)
\bigl(y-y'\bigr)\le\xch{\mu\bigl(y-y'\bigr)^2,}{\mu\bigl(y-y'\bigr)^2}
\end{equation}
for all
$t\in[0,T], y, y'\in\R^k, z\in\R^{k\times d}, v\in\cL^2_\lambda$.
\item[$(H4)$] For every $r>0$, the mapping $t\in[0,T]\to\Sup_{|y|\le
r}|f(t,y,0,0)-f(t,0,\break 0,0)|$ belongs to $\L^1(\varOmega\times[0,T])$.
\item[$(H5)$] $f$ is Lipschitz continuous w.r.t. $z$, that is, there
exists a constant $L>0$ such that
\begin{equation}
\nn \big|f(t,y,z,v)-f\bigl(t,y,z',v\bigr)\big|\le \xch{L\big|z-z'\big|,}{L|z-z'|}
\end{equation}
for all $t\in[0,T], y\in\R^k, z, z'\in\R^{k\times d}, v\in\cL
^2_\lambda$.
\item[$(H6)$] $f$ is Lipschitz continuous w.r.t. $v$, that is, there
exists a constant $L>0$ such that\vadjust{\eject}
\begin{equation}
\nn\big|f(t,y,z,v)-f\bigl(t,y,z,v'\bigr)\big|\le \xch{L\big\|v-v'\big\|_{\cL_\lambda^2},}{L\|v-v'\|_{\cL_\lambda^2}}
\end{equation}
for all $t\in[0,T], y\in\R^k, z\in\R^{k\times d}, v, v'\in\cL^2_\lambda$.
\end{enumerate}

To begin with, let us make precise the notion of $\L^p$-solutions of
the \hbox{GBSDE}~\eqref{RBSDE-jumps},
which we consider throughout this paper.
\begin{defin}\label{Lp-solution-GBSDE-definition}
We say that $(Y,Z,V,M):=(Y_t,Z_t,V_t,M_t)_{0\le t\le T}$ is an $\L
^p$-solution of the GBSDE \eqref{RBSDE-jumps} if
$(Y,Z,V,M)\in\varXi^p$ \xch{and}{and Eq.}~\eqref{RBSDE-jumps} is satisfied $\P$-a.s.
\end{defin}

\section{Generalized BSDEs with constant terminal time}

\subsection{$\L^2$-solutions}

In this subsection, we study the classical case of $\L^2$-solutions of
GBSDE \eqref{RBSDE-jumps}. The results given here generalize those of
\cite{P97} and \cite{KP14}. Note that the integrability condition
$(H1)_{p=2}$ made on $f(\cdot,0,0,0)$ is weaker than the assumption
$E\int_0^T|f(t,0,\break 0,0)|^2dt<+\infty$, $t\in[0,T]$, made in those
papers, which means that our assumption is weaker than that of \cite
{KP14} even in the case $R\equiv0$.

Let us begin by giving
nonstandard a priori estimates on the solution, which will play a
primordial role in the proof of Theorem~\ref
{existence-uniqueness-in-L2-multi-case}.
Let us first make the following assumption:
\begin{itemize}
\item[$(A)$] There exist $L\ge0$, $\mu\in\R$, and a nonnegative
progressively measurable process $\{f_t\}_{t\in[0,T]}$ satisfying $\E
(\int_0^{T}f_s ds)^2<+\infty$ such that

$\forall(t,y,z,v)\in[0,T]\times\R^k\times\R^{k\times d}\times\cL
^2_\lambda$,
\begin{equation}
\nn \operatorname{\widehat{sgn}}(y)f(t,y,z,v)\le f_t+\mu|y|+L|z|+L\|v
\|_{\cL^2_\lambda},\quad dt\otimes d\P\text{-a.s.}
\end{equation}
\end{itemize}
\begin{remark}
Note that $(A)$ is not a new assumption, but a direct consequence of
assumptions $(H3)$, $(H5)$, and $(H6)$ with $f_t=|f(t,0,0,0)|$.
In fact, three assumptions $(H3)$, $(H5)$, and $(H6)$ are reduced to a
single one \textup{(}assumption~$(A))$ for simplicity.
\end{remark}
\begin{lemma}\label{Lemma9} Let assumption $(A)$ hold, and let
$(Y,Z,V,M)$ be a solution of\break \mbox{GBSDE~\eqref{RBSDE-jumps}}. If $Y\in\cS^2$
and
%
\begin{equation}
\label{f_s-2-integrability} \E|\xi|^2+\E\Biggl(\int_0^{T}f_s
ds\Biggr)^2+\E|R|^2_T<+\infty,
\end{equation}
then, $(Z,V,M)$ belongs to $\cM^2\times\cL^2\times\M^2$, and for
some $a\ge\mu+2L^2$, there is a constant $C>0$
such that, for all $0\le q\le t\le T$,
\begin{align}
\label{exponential-estimate-Z-V-K-L2}
&\E \Biggl[\Sup_{s\in[t,T]}e^{2as}|Y_s|^2+\int_t^{T}e^{2as}|Z_s|^2 ds+\int_{t}^{T}\int_Ue^{2as}\big|V_{s}(e)\big|^2\lambda(de) ds\nn\\
&\qquad +e^{2aT}[ M]_T-e^{2at}[M]_t\Bigm|\cF_q \Biggr]\nn\\
&\quad \le C\E \Biggl[e^{2aT}|\xi|^2+ \Biggl(\int_t^{T}e^{as}f_s ds\Biggr)^2+ \Biggl(\int_t^{T}e^{as}d|R|_s\Biggr)^2\Bigm|\cF_q \Biggr].
\end{align}
\end{lemma}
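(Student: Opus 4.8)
The plan is to apply Itô's formula to $e^{2as}|Y_s|^2$ on a suitable localizing sequence, use assumption $(A)$ to control the drift term, and then take conditional expectations together with the Burkholder--Davis--Gundy inequality to obtain the estimate. First I would introduce a localizing sequence of stopping times $\t_n=\inf\{t\ge 0: \int_0^t(|Z_s|^2+\int_U|V_s(e)|^2\lambda(de))ds+[M]_t\ge n\}\wedge T$, which reduces all local martingales appearing below to true martingales. Applying Itô's formula to $s\mapsto e^{2as}|Y_s|^2$ between $t\wedge\t_n$ and $T\wedge\t_n$, I get
\begin{align*}
&e^{2a(t\wedge\t_n)}|Y_{t\wedge\t_n}|^2+2a\int_{t\wedge\t_n}^{T\wedge\t_n}e^{2as}|Y_s|^2ds+\int_{t\wedge\t_n}^{T\wedge\t_n}e^{2as}|Z_s|^2ds\\
&\qquad+\int_{t\wedge\t_n}^{T\wedge\t_n}\int_U e^{2as}|V_s(e)|^2\pi(de,ds)+\sum e^{2as}|\Delta M_s|^2+\int_{t\wedge\t_n}^{T\wedge\t_n}e^{2as}d[M]_s^c\\
&\quad=e^{2a(T\wedge\t_n)}|Y_{T\wedge\t_n}|^2+2\int_{t\wedge\t_n}^{T\wedge\t_n}e^{2as}Y_sf(s,Y_s,Z_s,V_s)ds+2\int_{t\wedge\t_n}^{T\wedge\t_n}e^{2as}Y_{s-}dR_s\\
&\qquad-(\text{martingale terms involving }Z,V,M).
\end{align*}
The drift term is handled by $(A)$: since $Y_sf(s,Y_s,Z_s,V_s)\le|Y_s|(f_s+\mu|Y_s|+L|Z_s|+L\|V_s\|_{\cL^2_\lambda})$, Young's inequality gives $2e^{2as}Y_sf(s,\ldots)\le 2\mu e^{2as}|Y_s|^2+\frac12 e^{2as}|Z_s|^2+\frac12 e^{2as}\|V_s\|^2_{\cL^2_\lambda}+Ce^{2as}|Y_s|^2+Ce^{2as}f_s|Y_s|$; the choice $a\ge\mu+2L^2$ ensures the quadratic-in-$Y$ terms on the right are absorbed by $2a\int e^{2as}|Y_s|^2ds$ on the left, leaving a margin, and the $\frac12|Z_s|^2,\frac12\|V_s\|^2$ terms are absorbed by half of the corresponding left-hand integrals. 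Similarly $2e^{2as}Y_{s-}dR_s$ is dominated by using $2ab\le \varepsilon a^2+\varepsilon^{-1}b^2$ in the integral form, producing a term $\le\Sup_s e^{2as}|Y_s|\cdot\int e^{as}d|R|_s$ which after another Young splitting contributes $\frac14\Sup_s e^{2as}|Y_s|^2+C(\int_t^T e^{as}d|R|_s)^2$.

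After these absorptions, taking $\E[\,\cdot\mid\cF_q]$ (legitimate because on the stopped interval everything is integrable; note $q\le t$ so $\cF_q\subset\cF_t$) kills the genuine martingale increments and leaves, for the stopped processes,
\[
\E\Big[\int_{t\wedge\t_n}^{T\wedge\t_n}e^{2as}|Z_s|^2ds+\int_{t\wedge\t_n}^{T\wedge\t_n}\!\!\int_U e^{2as}|V_s(e)|^2\lambda(de)ds+e^{2a(T\wedge\t_n)}[M]_{T\wedge\t_n}-e^{2a(t\wedge\t_n)}[M]_{t\wedge\t_n}\Bigm|\cF_q\Big]
\]
bounded by $C\,\E[e^{2aT}|\xi|^2+(\int_t^T e^{as}f_s ds)^2+(\int_t^T e^{as}d|R|_s)^2\mid\cF_q]+C\E[\Sup_{s}e^{2as}|Y_s|^2\mid\cF_q]$, where I used $|Y_{T\wedge\t_n}|\le\Sup_s|Y_s|$ together with $Y\in\cS^2$ so the right side is finite. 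Letting $n\to\infty$ by monotone convergence on the left (the integrands are nonnegative and $\t_n\nearrow T$) shows first that $Z\in\cM^2$, $V\in\cL^2$, $M\in\M^2$, and gives the $\cF_q$-conditional bound for the $Z,V,M$ block. It remains to reinsert the $\Sup_s e^{2as}|Y_s|^2$ term on the left: going back to the Itô expansion before stopping is removed, rearrange to bound $\Sup_{s\in[t,T]}e^{2as}|Y_s|^2$ by the terminal value, the $f_s$ and $R$ integrals, and the supremum of the martingale part; apply BDG to the martingale part, which is controlled by $\E[(\int_t^T e^{4as}|Y_s|^2(|Z_s|^2+\|V_s\|^2)ds)^{1/2}+(\int e^{4as}|Y_s|^2 d[M]_s)^{1/2}\mid\cF_q]$, dominate this by $\frac14\E[\Sup_s e^{2as}|Y_s|^2\mid\cF_q]+C\E[\int e^{2as}|Z_s|^2ds+\int\int e^{2as}|V_s|^2\lambda(de)ds+e^{2aT}[M]_T-e^{2at}[M]_t\mid\cF_q]$, and absorb the $\frac14$-term. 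Combining with the bound already obtained for the $Z,V,M$ block yields \eqref{exponential-estimate-Z-V-K-L2}.

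The main obstacle I expect is the simultaneous treatment of the $R$-driven term and the supremum term under the conditional expectation: the term $2\int e^{2as}Y_{s-}dR_s$ cannot be absorbed against anything of Gronwall type (there is no $|R|^2$ on the left), so it must be pushed entirely to the right-hand side via a factorization $\Sup_s e^{as}|Y_s|\cdot\int_t^T e^{as}d|R|_s$, and the resulting $\Sup_s e^{2as}|Y_s|^2$ must be kept small enough to survive the later absorption from the BDG step. Careful bookkeeping of the constants $\varepsilon$ in the several Young inequalities — ensuring the $|Z|^2$, $\|V\|^2$, $[M]$ and $\Sup|Y|^2$ coefficients left on the left after all absorptions are strictly positive — is the delicate point; once the constants line up, everything else is a standard application of Itô, $(A)$, and BDG. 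A minor additional technicality is justifying that one may condition on $\cF_q$ throughout (rather than on $\cF_t$); this is fine since $q\le t$ and all integrands from time $t$ onward are nonnegative or already shown integrable, so the tower property applies.
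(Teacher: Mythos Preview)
Your proposal is correct and follows essentially the same route as the paper's proof: a two-step argument where one first localizes and applies It\^o's formula to bound the $Z,V,M$ block conditionally in terms of $\sup e^{2as}|Y_s|^2$ and the data, and then bounds $\sup e^{2as}|Y_s|^2$ itself via the BDG inequality applied to the martingale terms, with repeated Young absorptions. The only cosmetic difference is that the paper begins with an exponential change of variables $\widetilde Y_t=e^{at}Y_t$ (and similarly for $Z,V,M,R,f$) to reduce to the case $a=0$, $\mu+2L^2\le 0$, and then runs both steps without the weight; you instead carry the weight $e^{2as}$ throughout and use the term $2a\int e^{2as}|Y_s|^2\,ds$ on the left to absorb the quadratic-in-$Y$ drift, which is an equivalent bookkeeping choice.
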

\begin{proof}
The proof is performed in two steps. For simplicity, we can assume
w.l.o.g.\ that $a=0$. Indeed, let us fix $a\ge\mu+2L^2$ and define
$\widetilde{Y}_t=e^{at}Y_t$, $\widetilde{Z}=e^{at}Z_t$, $\widetilde
{V}_t=e^{at}V_t$, $d\widetilde{M}_t=e^{at}dM_t$. Observe that
$(\widetilde{Y},\widetilde{Z},\widetilde{V},\widetilde{M})$ solves
the following GBSDE:
\begin{align*}
\widetilde{Y}_t&=\widetilde{\xi}+\int^T_t\widetilde {f}(s,\widetilde{Y}_s,\widetilde{Z}_s,\widetilde{V}_s) ds+\int_t^Td\widetilde{R}_s -\int^T_t\widetilde{Z}_s dW_s\\[-1pt]
&\quad -\int_{t}^{T}\int_U\widetilde{V}_{s}(e)\widehat\pi(de,ds)-\int_t^Td\widetilde{M}_s,\quad t\in[0,T],
\end{align*}
where $\widetilde{\xi}=e^{aT}\xi$, $\widetilde
{f}(t,y,z,v)=e^{at}f(t,e^{-at}y,e^{-at}z,e^{-at}v)-ay$, and
$d\widetilde{R}_t=e^{at}dR_t$. Notice that $\widetilde{f}$ satisfies
assumption $(A)$ with $\widetilde{f}_t=e^{at}f_t$, $\widetilde{\mu
}=\mu-a$, $\widetilde{L}=L$. Since we are working on a compact time
interval, the integrability conditions are equivalent with or without
the superscript \ $\widetilde{}$ . Thus, with
this change of variable, we reduce to the case $a=0$ and $\mu+2L^2\le
0$. We omit the
superscript \ $\widetilde{}$ \ for notational convenience.

{\bf Step 1.} First, we show that there exists a constant $C>0$ such
that, for all $0\le q\le t\le T$,
\begin{align}
\label{estimate-YZVM-L2-step1}
& \E \Biggl(\int_t^T|Z_s|^2 ds+\int_t^T\int_U\big|V_{s}(e)\big|^2 \lambda (de) ds +\int_t^Td[M]_s\Bigm| \cF_q \Biggr)\nn           \\[-2pt]
&\quad \le C\E \Biggl(\Sup_{u\in[t,T]}|Y_u|^2+ \Biggl( \int_t^Tf_s ds \Biggr)^2+ \Biggl(\int_t^Td|R|_s \Biggr)^2\Bigm|\cF_q \Biggr).
\end{align}
Since there is a lack of integrability of the processes $(Z,V,M)$, we
are proceeding by localization. For $n\in\N$, we set
\[
\tau_n=\inf\Biggl\{t>0;\int_0^t|Z_s|^2ds+
\int_{0}^{t}\int_U\big|V_{s}(e)\big|^2
\lambda(de) ds+[M]_t>n\Biggr\}\wedge T.
\]
By It\^o's formula (see \cite[Thm.~II.32]{Protter90}),
\begin{align}
\label{rbsde-ito}
&|Y_{t\wedge{\t_n}}|^2+\int_{t\wedge{\t_n}}^{\t_n}|Z_s|^2 ds+\int_{t\wedge{\t_n}}^{\t_n}\int_U\big|V_{s}(e)\big|^2\pi(de,ds)+\int_{t\wedge{\t_n}}^{\t_n}d[M]_s\nn\\[-2pt]
&\quad =|Y_{\t_n}|^2+2\int_{t\wedge{\t_n}}^{\t_n}Y_{s}f(s,Y_s,Z_s,V_s) ds+2\int_{t\wedge{\t_n}}^{\t_n}Y_{s-}dR_s\nn\\[-2pt]
&\qquad -2\int_{t\wedge{\t_n}}^{\t_n} Y_{s}Z_s dW_s-2\int_{t\wedge{\t_n}}^{\t_n}\int_U Y_{s-}V_{s}(e)\widehat{\pi}(de,ds)-2\int_{t\wedge{\t_n}}^{\t_n}Y_{s-}dM_s .
\end{align}
But from $(A)$, the basic inequality $2ab\le2a^2+\frac{b^2}{2}$, and
the fact that $\mu+2L^2\le0$ we have that
\begin{align}
\label{gbsde-ito1-L2}
2Y_{s}f(s,Y_s,Z_s,V_s)&\le 2L|Y_{s}||Z_{s}|+2L|Y_{s}|\big\|V_{s}(e)\big\| _{\cL^2_\lambda}+2\mu|Y_{s}|^2+2|Y_{s}|f_s\nn\\[-2pt]
&\le 2\bigl(\mu+2L^2\bigr)|Y_{s}|^2+2|Y_{s}|f_s+\frac{1}{2}|Z_{s}|^2+\frac{1}{2}\int_U\big|V_{s}(e)\big|^2\lambda(de)\nn\\[-2pt]
&\le 2|Y_{s}|f_s+\frac{1}{2}|Z_{s}|^2+\frac{1}{2}\int_U\big|V_{s}(e)\big|^2\lambda(de).
\end{align}
Then, plugging the last inequality into \eqref{rbsde-ito}, we deduce
\begin{align*}
&\frac{1}2\int_{t\wedge{\t_n}}^{\t_n}|Z_s|^2 ds+\int_{t\wedge{\t_n}}^{\t_n}\int_U\big|V_{s}(e)\big|^2\pi(de,ds)\nn\\[-1pt]
&\qquad -\frac{1}2\int_{t\wedge{\t_n}}^{\t_n}\int_U\big|V_{s}(e)\big|^2 \lambda (de) ds+\int_{t\wedge{\t_n}}^{\t_n}d[M]_s\nn\\[-1pt]
&\quad \le \sup_{s\in[t\wedge\t_n,T]}|Y_{s}|^2+2\sup_{s\in[t\wedge\t_n,T]}|Y_{s}| \Biggl(\int_{t\wedge{\t_n}}^Tf_s ds \Biggr)\\[-1pt]
&\qquad +2\sup_{s\in[t\wedge\t_n,T]}|Y_{s}| \Biggl(\int_{t\wedge{\t_n}}^{T}d|R|_s \Biggr)-2\int_{t\wedge{\t_n}}^{\t_n} Y_{s}Z_s dW_s\\[-1pt]
&\qquad -2\int_{t\wedge{\t_n}}^{\t_n}\int_U Y_{s-}V_{s}(e)\widehat{\pi }(de,ds)-2\int_{t\wedge{\t_n}}^{\t_n}Y_{s-} dM_s.\nn
\end{align*}
Hence, using the inequality $2ab\le a^2+b^2$, we obtain
\begin{align}
\label{gbsde-ito2-L2}
&\!\!\!\frac{1}2\int_{t\wedge{\t_n}}^{\t_n}|Z_s|^2 ds\,{+}\,\int_{t\wedge{\t_n}}^{\t_n}\int_U\big|V_{s}(e)\big|^2\pi(de,ds)\nn\\[-1pt]
&\!\!\!\qquad -\frac{1}2\int_{t\wedge{\t_n}}^{\t_n}\int_U\big|V_{s}(e)\big|^2 \lambda (de) ds+\int_{t\wedge{\t_n}}^{\t_n}d[M]_s\nn\\[-1pt]
&\!\!\!\quad \le 3\sup_{u\in[t\wedge\t_n,T]}|Y_{u}|^2+ \Biggl(\int_{t\wedge{\t_n}}^Tf_s ds\Biggr)^2+ \Biggl(\int_{t\wedge{\t_n}}^{T}d|R|_s\Biggr)^2\nn\\[-1pt]
&\!\!\!\qquad \,{-}\,2\!\int_{t\wedge{\t_n}}^{\t_n} Y_{s}Z_s dW_s\,{-}\,2\!\int_{t\wedge{\t_n}}^{\t_n}\int_U Y_{s-}V_{s}(e)\widehat{\pi}(de,ds)\,{-}\,2\!\int_{t\wedge{\t_n}}^{\t_n}Y_{s-}dM_s.
\end{align}
Note that since $Y\in\cS^2$, by the definition of the stopping time
$\t_n$ it follows by the BDG inequality that
$\int_0^{t\wedge\t_n}Y_{s}Z_s dW_s$,
$\int_0^{t\wedge\t _n}\!\int_U Y_{s-}V_s(e)\widehat{\pi}(de,ds)$
and $\int_0^{t\wedge\t _n}Y_{s-} dM_s$ are uniformly integrable martingales.
Consequently, taking the conditional expectation w.r.t. $\cF_q$, $0\le
q\le t\le T$, in both sides of \eqref{gbsde-ito2-L2} yields
\begin{align}
&\E \Biggl(\frac{1}2\int_{t\wedge{\t_n}}^{\t_n}|Z_s|^2 ds+\frac{1}2\int_{t\wedge{\t_n}}^{\t_n}\int_U\big|V_{s}(e)\big|^2\lambda(de) ds +\int_{t\wedge{\t_n}}^{\t_n}d[M]_s\Bigm|\cF_q\Biggr)\nn\\[-1pt]
&\quad \le \E \Biggl(3\sup_{u\in[t\wedge\t_n,T]}|Y_{u}|^2+\Biggl(\int_{t\wedge{\t_n}}^Tf_s ds\Biggr)^2+ \Biggl(\int_{t\wedge{\t_n}}^{T}d|R|_s\Biggr)^2\Bigm|\cF_q \Biggr).\nn
\end{align}
Therefore, letting $n$ to infinity and using Fatou's lemma, we obtain
\eqref{estimate-YZVM-L2-step1}.

{\bf Step 2.} In this step, we will estimate $\E (\Sup_{u\in
[t,T]}|Y_u|^2|\cF_q )$, $0\le q\le t\le T$. Applying It\^o's
formula to $|Y_{t}|^2$ for each $t\in[0,T]$, we get
\begin{align}
\label{rbsde-ito-L2-Step2}
&|Y_{t}|^2+\int_{t}^{T}|Z_s|^2 ds+\int_{t}^{T}\int_U\big|V_{s}(e)\big|^2\pi(de,ds)+\int_{t}^{T}d[M]_s\nn\\[-1pt]
&\quad =|\xi|^2+2\int_{t}^{T}Y_{s}f(s,Y_s,Z_s,V_s) ds+2\int_{t}^{T}Y_{s-}dR_s\nn\\[-1pt]
&\qquad -2\int_{t}^{T} Y_{s}Z_s dW_s-2\int_{t}^{T}\int_U Y_{s-}V_{s}(e)\widehat{\pi}(de,ds)-2\int_{t}^{T}Y_{s-}dM_s ,
\end{align}
but in view of \eqref{gbsde-ito1-L2}, we deduce
\begin{align}
\label{rbsde-ito-2.73}
&|Y_{t}|^2+\frac{1}2\int_t^T|Z_s|^2 ds+\int_{t}^{T}\int_U\big|V_{s}(e)\big|^2\pi(de,ds)+\int_t^Td[M]_s\nn\\[-2pt]
&\quad \le|\xi|^2+2\int_t^T|Y_{s}|f_s ds+2\int_t^T|Y_{s-}|d|R|_s+\frac{1}2\int_t^T\int_U\big|V_{s}(e)\big|^2 \lambda(de) ds\nn\\[-2pt]
&\qquad -2\int_t^T Y_{s}Z_s dW_s-2\int_{t}^{T}\int_U Y_{s-}V_{s}(e)\widehat{\pi}(de,ds)-2\int_t^TY_{s-}dM_s.
\end{align}
Recalling that $Y\in\cS^2$, thanks to the first step (estimate \eqref
{estimate-YZVM-L2-step1}), it follows that $Z\in\cM^2$, $V\in\cL
^2$, and $M\in\M^2$. Therefore, by the BDG inequality we deduce that
$\int_0^{t}Y_{s}Z_s dW_s$, $\int_{0}^{t}\int_U
Y_{s-}V_{s}(e)\widehat{\pi}(de,ds)$, and $\int_0^{t}Y_{s-} dM_s$
are uniformly integrable martingales.
Therefore, taking the conditional expectation in \eqref
{rbsde-ito-2.73} w.r.t. $\cF_q$, it follows that, for all $0\le q\le
t\le T$,
\begin{align}
\label{ineq2.78}
\E \Biggl(\frac{1}2\int_t^T|Z_s|^2\,\xch{ds}{ds|}+\frac{1}2\int_t^T\int_U\big|V_{s}(e)\big|^2 \lambda(de)ds+\int_t^Td[M]_s\Bigm|\cF_q\Biggr)\le\E (\varsigma\mid \cF_q ),
\end{align}
where $\varsigma=|\xi|^2+2\int_t^T|Y_{s}|f_s ds+2\int_t^T|Y_{s-}|d|R|_s$.

Next, we deduce from \eqref{rbsde-ito-2.73} that
\begin{align}
\Sup_{u\in[t,T]}|Y_{u}|^2&\le\varsigma+
\frac{1}2\int_t^T\int
_U\big|V_{s}(e)\big|^2 \lambda(de) ds +2
\sup_{u\in[t,T]}\Bigg|\int_u^T
Y_{s}Z_s dW_s\Bigg|\nn\\[-1pt]
&\quad +2\sup_{u\in[t,T]}\Bigg|\int_u^T
\int_UY_{s-}V_{s}(e)\widehat{\pi
}(de,ds)\Bigg|+2\sup_{u\in[t,T]}\Bigg|\int_u^TY_{s^-}
dM_s\Bigg|.
\end{align}
Consequently, taking the conditional expectation w.r.t. $\cF_q$, $0\le
q\le t\le T$, we obtain from \eqref{ineq2.78} that
\begin{align}
\label{Y-L2-BDG-cond-exp}
&\E \bigl(\Sup_{u\in[t,T]}|Y_{u}|^2\mid\cF_q \bigr)\nn\\[-2pt]
&\quad \le\E \Biggl(2\varsigma+2\sup_{u\in[t,T]}\bigg|\int_u^T Y_{s}Z_s dW_s\bigg|+2\sup_{u\in[t,T]}\bigg|\int_u^T\int_UY_{s-}V_{s}(e)\widehat{\pi}(de,ds)\bigg|\nn\\[-2pt]
&\qquad +2\sup_{u\in[t,T]}\bigg|\int_u^TY_{s^-}dM_s\bigg|\Bigm|\cF_q \Biggr).
\end{align}
Next, applying the BDG inequality to the martingale terms implies that
there exists a constant $C_1>0$ such that
{\allowdisplaybreaks
\begin{align}
&2\E \Biggl(\sup_{u\in[t,T]}\Biggl\llvert \int_u^TY_{s}Z_s dW_s\Biggr\rrvert \Bigm|\cF_q \Biggr)\nn\\[-2pt]
&\quad \le 2C_1\E \Biggl(\sup_{u\in[t,T]}|Y_u|\Biggl(\int_t^T |Z_s|^2ds\Biggr)^{1/2}\Bigm|\cF_q \Biggr)\nn\\
&\quad \le \frac{1}4\E \bigl(\Sup_{u\in[t,T]}|Y_u|^2\Bigm|\cF_q \bigr)+2C_1^2\E \Biggl(\int_t^T|Z_s|^2ds\Bigm|\cF_q \Biggr),\label{ineq2.75}\\[3pt]
&2\E \Biggl(\sup_{u\in[t,T]}\Biggl\llvert \int_u^TY_{s-}V_s(e)\widehat{\pi}(de,ds)\Biggr\rrvert \Bigm|\cF_q \Biggr)\nn\\
&\quad \le 2C_1\E \Biggl(\sup_{u\in[t,T]}|Y_u|\Biggl(\int_t^T \big|V_s(e)\big|^2\pi(de,ds) \Biggr)^{1/2}\Bigm|\cF_q \Biggr)\nn\\
&\quad \le \frac{1}4\E \Bigl(\Sup_{u\in[t,T]}|Y_u|^2\Bigm|\cF_q \Bigr)+2C_1^2\E \Biggl(\int_t^T\big|V_s(e)\big|^2\lambda(de)ds\Bigm|\cF_q \Biggr),\label{ineq2.76}
\end{align}}%
and
\begin{align}
\label{ineq2.77}
&2\E \Biggl(\sup_{u\in[t,T]}\Biggl\llvert \int_u^TY_{s-}dM_s\Biggr\rrvert \Bigm|\cF_q \Biggr)\nn\\
&\quad \le2C_1\E \Biggl(\sup_{u\in[t,T]}|Y_u|\Biggl(\int_t^T d[M]_s\Biggr)^{1/2}\Bigm|\cF_q \Biggr)\nn\\
&\quad \le \frac{1}4\E \Bigl(\Sup_{u\in[t,T]}|Y_u|^2\Bigm|\cF_q \Bigr)+2C_1^2\E \Biggl(\int_t^Td[M]_s\Bigm|\cF_q\Biggr).
\end{align}
Hence, plugging estimates \eqref{ineq2.75}--\eqref{ineq2.77} into
\eqref{Y-L2-BDG-cond-exp} implies in view of \eqref{ineq2.78} that
there exits a constant $C_2>0$ such that
\begin{equation}
\nn \E \Bigl(\Sup_{u\in[t,T]}|Y_u|^2\Bigm|\cF_q \Bigr)\le C_2\E(\varsigma \mid\cF_q).
\end{equation}
Applying Young's inequality yields
\begin{align}
\label{ineq2.79}
&C_2\E \Biggl(\int_t^T|Y_{s}|f_s ds+\int_t^T|Y_{s-}|d|R|_s\Bigm|\cF _q \Biggr)\nn\\
&\quad \le \frac{1}4\E \Bigl(\Sup_{u\in[t,T]}|Y_u|^2\Bigm|\cF_q \Bigr)+C_3\E \Biggl[ \Biggl(\int_t^Tf_s ds \Biggr)^2 +\Biggl(\int_t^Td|R|_s\Biggr)^2\Bigm|\cF_q \Biggr],
\end{align}
from which we deduce, coming back to the definition of $\varsigma$,
that there exists $C_4>0$ such that
\begin{eqnarray*}
\E \Bigl(\Sup_{u\in[t,T]}|Y_u|^2\Bigm|\cF_q
\Bigr) \le C_4\E \Biggl(|\xi|^2+ \Biggl(\int
_t^Tf_s ds \Biggr)^2+
\Biggl(\int_t^Td|R|_s
\Biggr)^2\Bigm|\cF_q \Biggr).
\end{eqnarray*}
Finally, combining this with \eqref{estimate-YZVM-L2-step1}, the
desired result follows, which ends the proof.
\end{proof}

Now, we give the main result of this subsection.
\begin{thm}{$(L^2$-solutions\textup{)}}\label
{existence-uniqueness-in-L2-multi-case}
Assume that $(H1)_{p=2}$--$(H6)$ are in force. Then, there exists a
unique $\L^2$-solution $(Y,Z,V,M)$
for the GBSDE \eqref{RBSDE-jumps}.\vadjust{\eject}
\end{thm}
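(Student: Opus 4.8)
The plan is to follow the classical scheme for monotone BSDEs (Pardoux \cite{P97}, Briand et al.\ \cite{briandetal03}, Kruse--Popier \cite{KP14}) adapted to the generalized setting with the jump term and the additional martingale component $M$. The first step is to reduce to a bounded, Lipschitz situation by truncation. Fix $a\ge\mu+2L^2$ and pass, as in the proof of Lemma~\ref{Lemma9}, to the equivalent GBSDE for $(e^{at}Y_t,\ldots)$, so that we may assume $\mu+2L^2\le0$. Then, for $n\in\N$, introduce truncation functions $\theta_n\colon\R^k\to\R^k$ (a smooth radial cutoff equal to the identity on $\{|y|\le n\}$ and supported in $\{|y|\le n+1\}$) and replace the generator by $f_n(t,y,z,v)=f(t,\theta_n(y),z,v)-f(t,\theta_n(y),0,0)+f(t,\theta_n(y),0,0)\1_{\{|f(t,\theta_n(0),0,0)|\le n\}}$, together with truncating $\xi$ at level $n$ and $R$ by stopping it when $|R|_t$ reaches $n$; the point of $(H4)$ is precisely that $t\mapsto\sup_{|y|\le n+1}|f(t,y,0,0)-f(t,0,0,0)|\in\L^1$, so each $f_n$ is bounded (in $t,\omega$) plus Lipschitz in $(z,v)$ and, being now bounded in $y$ and monotone, falls under a standard existence theorem for $\L^2$-GBSDEs with jumps in a general filtration. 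The existence of $(Y^n,Z^n,V^n,M^n)\in\varXi^2$ for the truncated problem should be quotable or provable by a Picard/Yosida argument exactly as in \cite{P97,KP14}; the convolution/Yosida regularization in $y$ handles the mere continuity $(H2)$, and the martingale $M^n$ appears via the decomposition of Lemma~\ref{loc-mart-decomposition}.

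The second step is the crucial a priori bound: I would show that $Y^n$ is bounded uniformly in $n$ by a constant depending only on the data, so that for $n$ large the truncation is inactive and $(Y^n,Z^n,V^n,M^n)$ actually solves GBSDE$(\xi,f+dR)$. This is exactly where the paper signals the difficulty (``the approach followed in the papers mentioned to get this important estimate fails in our context''); the standard trick of getting a pointwise bound on $Y$ via It\^o on $e^{\beta t}|Y_t|^2$ does not close here because $f(t,0,0,0)$ is only in $\L^1_t$ (not $\L^2_t$ or $\L^\infty_t$) and because of the finite-variation driver $R$. The resolution is to use Lemma~\ref{Lemma9}: apply its conditional exponential estimate \eqref{exponential-estimate-Z-V-K-L2} to $(Y^n,Z^n,V^n,M^n)$ — legitimate since each $Y^n\in\cS^2$ by construction — with $q=t$, take $s=t$ inside the supremum, and use that $e^{2at}|Y^n_t|^2$ is bounded by the conditional expectation of $e^{2aT}|\xi_n|^2+(\int_t^Te^{as}f_s\,ds)^2+(\int_t^Te^{as}d|R|_s)^2\le e^{2aT}|\xi|^2+(\int_0^Te^{as}f_s\,ds)^2+(\int_0^Te^{as}d|R|_s)^2$, whose $\L^1$ norm is controlled by $(H1)_{p=2}$. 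Combined with Doob's inequality this yields $\|Y^n\|_{\cS^2}\le C$ uniformly; one then also needs the stability/contraction estimate showing $(Y^n-Y^m,\ldots)\to0$ in $\varXi^2$, obtained by It\^o on $|Y^n_t-Y^m_t|^2$ using monotonicity $(H3)$ and the Lipschitz conditions $(H5)$--$(H6)$ to absorb the $z,v$ terms, plus the uniform bound to control the difference of the truncated generators. Passing to the limit gives a solution $(Y,Z,V,M)\in\varXi^2$ of \eqref{RBSDE-jumps}.

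Finally, uniqueness: given two solutions $(Y,Z,V,M)$ and $(Y',Z',V',M')$ in $\varXi^2$, set $\delta Y=Y-Y'$, etc., apply It\^o's formula to $|\delta Y_t|^2$ on $[0,T]$ (the $dR$ terms cancel since both solve the same equation), take expectations — the stochastic integrals against $W$, $\widehat\pi$, $M$, $M'$ being true martingales because all ingredients are in the right $\L^2$ spaces — use $(H3)$ to get $(f(s,Y_s,Z_s,V_s)-f(s,Y'_s,Z_s,V_s))\delta Y_s\le\mu|\delta Y_s|^2$ and $(H5)$--$(H6)$ with Young's inequality to dominate the cross terms by $\tfrac12|\delta Z_s|^2+\tfrac12\|\delta V_s\|^2_{\cL^2_\lambda}$ plus a multiple of $|\delta Y_s|^2$, then conclude by Gronwall that $\delta Y\equiv0$ and hence $\delta Z\equiv0$, $\delta V\equiv0$, $\delta M\equiv0$. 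The main obstacle throughout is the uniform $\cS^2$-bound on the truncated solutions; everything else is the routine monotone-BSDE machinery, and the nonstandard conditional estimate of Lemma~\ref{Lemma9} is the tool that makes that obstacle surmountable.
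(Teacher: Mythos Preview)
Your overall plan is right in spirit, but the central step has a real gap. You correctly identify Lemma~\ref{Lemma9} with $q=t$ as the tool for bounding $|Y^n_t|^2$ by a conditional expectation of the data. However, the conclusion you draw --- that the $y$-truncation $\theta_n$ becomes inactive for large $n$ --- requires a \emph{pointwise} (a.s.) bound on $|Y^n_t|$ uniform in $n$, not merely a uniform $\cS^2$ bound. Since you bound the truncated data by the untruncated data, the right-hand side $\E[e^{2aT}|\xi|^2+(\int_0^T e^{as}f_s\,ds)^2+(\int_0^T e^{as}d|R|_s)^2\mid\cF_t]$ is an integrable random variable, not a bounded one; Lemma~\ref{Lemma9} then only delivers $\|Y^n\|_{\cS^2}\le C$, which does not deactivate $\theta_n$. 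And without the pointwise bound your Cauchy argument breaks too: the difference $f_n(s,Y^n_s,\cdot)-f_m(s,Y^m_s,\cdot)$ involves $f(s,\theta_n(Y^n_s),\cdot)-f(s,\theta_m(Y^m_s),\cdot)$, and the monotonicity $(H3)$ is for $f$, not for $f(\cdot,\theta_n(\cdot),\cdot)$, so the usual absorption does not go through.

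The paper fixes this by decoupling the two truncation levels. First (Steps~1--3) one \emph{assumes} the data are bounded, $|\xi|+\sup_t|f(t,0,0,0)|+|R|_T\le\epsilon$; then Lemma~\ref{Lemma9} with $q=t$ gives a genuinely deterministic bound $|Y^n_t|\le r=r(\epsilon)$, so any $y$-cutoff at level $>r$ is inactive and can be removed. Only afterwards (Step~4) is the boundedness of the data lifted, by truncating $\xi$, $f(\cdot,0,0,0)$, and $R$ alone --- \emph{not} $y$ --- and showing the resulting sequence is Cauchy via Lemma~\ref{Lemma9} applied to the differences; at that stage there is no $y$-cutoff, so assumption~$(A)$ holds for the difference with $f_t=|T_m(f(t,0))-T_n(f(t,0))|$ and the estimate is clean. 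Your single-pass truncation collapses these two layers and loses exactly the pointwise bound that makes the first layer work. A secondary point: the paper also freezes $(z,v)$ throughout Steps~1--4 (Lemma~\ref{prop-GBSDE-contraction}) and reintroduces the full $(z,v)$-dependence only via a Picard iteration in Step~5; you should keep this separation or explain why your truncated problem with full $(z,v)$-dependence is directly solvable. Your uniqueness argument via It\^o plus Gronwall is fine; the paper shortcuts it by observing that the difference satisfies assumption~$(A)$ with $f_t\equiv0$ and invoking Lemma~\ref{Lemma9} once.
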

\begin{proof}
{\it Uniqueness}.
Let $(Y,Z,V,M)$ and $(Y',Z',V',M')$ denote respectively two $\L
^2$-solutions of GRBSDE \eqref{RBSDE-jumps}.
Define $(\bar{Y},\bar{Z},\bar{V},\bar{M})=(Y-Y',Z-Z',V-V',M-M')$.
Then, $(\bar{Y},\bar{Z},\bar{V},\bar{M})$ solves the following
GBSDE in $\varXi^2$:
\begingroup
\abovedisplayskip=5pt
\belowdisplayskip=5pt
\begin{align}
\label{uniqueness-p=2}
\bar{Y}_t&=\int^T_t\bigl(f(s,Y_s,Z_s,V_s)-f\bigl(s,Y'_s,Z'_s,V'_s\bigr)\bigr) ds -\int^T_t\bar Z_s dW_s\\
&\quad -\int_{t}^{T}\int_U\bar V_{s}(e)\widehat{\pi}(de,ds)-\int_t^Td\bar M_s,\quad t\in[0,T].\nn
\end{align}
It follows from $(H3)$, $(H5)$, and $(H6)$ that
\begin{align*}
&\operatorname{\widehat{sgn}}(\bar{y}) \bigl(f(t,y,z,v)-f\bigl(t,y',z',v'\bigr)\bigr)\\
&\quad =\operatorname{\widehat{sgn}}(\bar{y}) \bigl[f(t,y,z,v)-f\bigl(t,y',z,v\bigr)+f\bigl(t,y',z,v\bigr)-f\bigl(t,y',z',v \bigr)\nn\\
&\qquad +f\bigl(t,y',z',v\bigr)-f\bigl(t,y',z',v'\bigr) \bigr]\nn\\
&\quad \le \mu|y|+L|z|+L\|v\|_{\cL^2_\lambda},
\end{align*}
which means that assumption $(A)$ is satisfied for the generator of
GBSDE \eqref{uniqueness-p=2} with $f_t\equiv0$. By Lemma~\ref{Lemma9}
with $q=t=0$ we obtain immediately that $(\bar{Y},\bar{Z},\bar
{V},\bar{M})=(0,0,0,0)$. The proof of the uniqueness is then
complete.

{\it Existence}.
Before giving the proof of the existence part, we will talk a little
bit about it, but let us first give the following assumption, the
so-called general growth condition, which will be needed later:
%
\begin{equation}
(H_{gg})\quad\text{For\ every}\ (t,y)\in[0,T]\times\R,\quad  \big|f(t,y,0,0)\big|
\le\big|f(t,0,0,0)\big|+\gamma\big(|y|\big),
\end{equation}
\endgroup
where $\gamma:\R^+\to\R^+$ is a deterministic continuous increasing
function.

The proof method of Theorem~\ref
{existence-uniqueness-in-L2-multi-case} is enlightened by \cite{KR13,P97,P99,briandetal03,KP14}, but
of course with some obvious changes. More precisely, the first step
uses arguments given in \cite{KR13,P97,KP14}, whereas
the techniques used in the second step, the convolution and weak
convergence, are borrowed from \cite{P97}. The truncation techniques
applied in the third and fourth steps are taken partly from \cite
{briandetal03,KP14,KR13}. However, it should be
mentioned that since we have changed, compared to \cite{KP14}, the $\L
^2$-integrability condition of $f(t,0,0,0)$ from $\E\int_0^T|f(t,0,0,0)|^2 dt<+\infty$ to the one given in $(H1)_{p=2}$ and
due also to the finite-variation part $dR$, some new troubles come up,
especially, when we want to prove an analogous result of \cite[Lemma
4]{KP14}, which says that whenever the data is bounded, so is the
solution of the GBSDE. Their approach fails in our context. This is the
reason why we give nonstandard estimates in Lemma~\ref{Lemma9},
which allows us to overcome this problem (see, e.g., estimates \eqref
{r-estimate} and \eqref{r-estimate-2}). Additionally, in order to
prove the existence part of Theorem~\ref
{existence-uniqueness-in-L2-multi-case}, we first need an existence
result under the assumptions of this theorem but with $(H4)$ replaced
with $(H_{gg})$, which extends results given in \cite{P97} and \cite
{KP14}.

The proof is divided into five steps as follows. Note that we will
frequently apply Lemma~\ref{Lemma9}. For simplicity, we will assume
w.l.o.g. that $a=0$, which means that, in this case, $\mu<0$ (since
$\mu+2L^2\le a=0$, i.e., $\mu\le-2L^2<0$). In the rest of the proof,
we will assume that $\mu<0$.

{\bf Step \xch{1.}{1:}} We first assume additionally that there exists a constant
$l>0$ such that
%
\begin{equation}
\label{lipschitz-assumption} \big|f(t,y,z,v)-f\bigl(t,y',z,v\bigr)\big|\le \xch{l\big|y-y'\big|,}{l|y-y'|}
\end{equation}
for $t\in[0,T], y,y'\in\R^k, z\in\R^{k\times d},v\in\cL
^2_\lambda$. Moreover, we assume also that there exists a constant
$\epsilon>0$ such that
%
\begin{equation}
\label{boundedness-data} |\xi|+\Sup_{t\in[0,T]}\big|f(t,0,0,0)\big|+|R|_T\le
\epsilon.
\end{equation}
For $(\varGamma,\varUpsilon,\varPsi,N)\in\varXi^2$, in view of the assumptions
made on
$\xi$, $f$, and $R$,
define the processes $(Y,Z,V,M)$ as follows:
\begin{align}
Y_t&=\E \Biggl[\xi+\int_0^Tf(s,
\varGamma_s,\varUpsilon_s,\varPsi_s)  ds+
\int_0^TdR_s\biggm|\cF_t
\Biggr]\nn
\\
&\quad -\int_0^tf(s,
\varGamma_s,\varUpsilon_s,\varPsi_s) ds-
\int_0^tdR_s,\nn
\end{align}
and the local martingale
\begin{equation}
\nn \E \Biggl[\xi+\int_0^Tf(s,
\varGamma_s,\varUpsilon_s,\varPsi_s) ds+
\int_0^TdR_s\biggm|\cF_t
\Biggr]-Y_0,
\end{equation}
which thanks to the martingale representation theorem (see Lemma~\ref
{loc-mart-decomposition}), can be decomposed as follows:
\[
\int_0^tZ_s dW_s+
\int_{0}^{t}\int_UV_{s}(e)
\widehat{\pi}(de,ds)+M_t,
\]
where $Z$, $V$, and $M$ belong respectively to $\L^2_{\mathit{loc}}(W)$,
$G_{\mathit{loc}}(\pi)$, and $\M_{\mathit{loc}}$.
Therefore, $(Y,Z,V,M)$ is the unique solution of the GBSDE
\begin{align}
\nn Y_t&=\xi+\int^T_tf(s,
\varGamma_s,\varUpsilon_s,\varPsi_s) ds+
\int_t^T  dR_s-\int
^T_tZ_s dW_s
\\
&\quad -\int_{t}^{T}\int
_UV_{s}(e)\widehat{\pi}(de,ds)-\int
_t^T dM_s,\quad t\in[0,T].
\end{align}
Moreover, from the conditions on $\xi$, $f$ and $R$ it is easy to
prove that $(Y,Z,V,\break M)\in\varXi^2$.

As a by-product, we may define the mapping $\varPhi:\varXi^2\to\varXi^2$
that associates $(\varGamma,\varUpsilon,\varPsi,N)$ with
$\varPhi((\varGamma,\varUpsilon,\varPsi,N))=(Y,Z,V,M)$. By standard arguments
(see, e.g., the proof of \cite[Thm.~55.1]{P97}) it can be shown that
$\varPhi$ is contractive on the Banach space $\varXi^2$ endowed with the norm
\begin{align*}
\big\|(Y,Z,V,M)\big\|_\beta&=\E \Biggl\{\Sup_{t\in[0,T]}e^{\beta t}|Y_t|^2+\int_0^Te^{\beta t}|Z_t|^2 dt\\
&\quad +\int_0^T\int_Ue^{\beta t}\big|V_t(e)\big|^2\lambda(de) dt\xch{+\Biggl[\int_0^. e^{\beta t} dM_t\Biggr]_T \Biggr\}^\frac{1}{2},}{+\Biggl[\int_0^. e^{\beta t} dM_t\Biggr]_T \Biggr\}^\frac{1}{2}}
\end{align*}
for a suitably chosen constant $\beta>0$. Consequently, $\varPhi$ has a
fixed point $(Y,Z,V,\break M)\in\varXi^2$. Therefore, clearly, $(Y,Z,V,M)$ is the
unique solution of GBSDE \eqref{RBSDE-jumps} under the assumptions
made so far.

{\bf Step \xch{2.}{2:}} In this step, we will show how to dispense with
assumption \eqref{lipschitz-assumption}. We state and prove
the following lemma.\vadjust{\eject}
\begin{lemma}\label{prop-GBSDE-contraction}
Assume that $(H1)_{p=2}, (H2),(H3),(H_{gg}), (H5)$, $(H6)$, and \eqref
{boundedness-data} hold. For given $(\varUpsilon,\varPsi)\in\cM^2\times
\cL^2$,
there exists a unique quadruple of processes $(Y,Z,V,\break M)\in\varXi^2$ such that
\begin{align}
\label{GBSDE-contraction} Y_t&=\xi+\int^T_tf(s,Y_s,
\varUpsilon_s,\varPsi_s) ds+\int_t^T
 dR_s-\int^T_tZ_s
 dW_s\nn
\\
&\quad -\int_{t}^{T}\int
_UV_{s}(e)\widehat{\pi}(de,ds)-\int
_t^T dM_s.
\end{align}
\end{lemma}
For notational convenience, we set $f(t,y)=f(t,y,\varUpsilon_t,\varPsi_t)$
for each $y\in\R^k$.

\smallskip

\noindent{\bf Proof.} Uniqueness is proved by arguing as for uniqueness in
Theorem~\ref{existence-uniqueness-in-L2-multi-case}. The result
follows immediately. For the existence part,
we follow the line of the proof of \cite[Prop.~2.4]{P99}.
Now, let us assume that \eqref{boundedness-data}
holds and define $f_n(t,y)=(\rho_n*f(t,\cdot))(y)$, where $\rho_n:\R
\to\R^+$ is a sequence of smooth functions with compact support
that approximate the Dirac measure at $0$ and satisfy $\int\rho
_n(z)dz=1$. Moreover, they are defined such that
$\varpi$ satisfying $\varpi(r)=\Sup_n\Sup_{|y|\le r}\int_{\R
}\gamma(|y|)\rho_n(y-z) dz$ is finite for all $r\in\R^+$.
Note that $f$ satisfies the following assumptions:
\begin{itemize}
\item[$(\mathit{i})$] $|f(t,y)|\le|f(t,0,0,0|+L(|\varUpsilon_s|+\|\varPsi_s\|_{\cL
_\lambda^2})+\gamma(|y|)$,
\item[$(\mathit{ii})$] $\E\int_0^T|f(t,0)|^2 dt<+\infty$,
\item[$(\mathit{iii})$] $(y-y')(f(t,y)-f(t,y'))\le0$,
\item[$(\mathit{iv})$] $y\rightarrow f(t,y)$ is continuous for all $t$ a.s.
\end{itemize}
Thus, it is elementary to check that $f_n$ satisfies (i)--(iv) with the
same constant~$L$ and $\varpi$
instead of $\gamma$. However, we cannot apply
step 1 of the proof since 
$f_n$ is not necessarily globally Lipschitz continuous in $y$ but only
locally Lipschitz. Hence, to overcome this
problem, we add a truncation function $T_p$ in $f_n$. Indeed, define,
for each $p\in\N$,
\begin{equation}
\nn f_{n,p}(t,y)=f_n\bigl(t,T_p(y)\bigr) \quad
\text{such\ that}\ T_p(y)=\frac
{py}{|y|\vee p}.
\end{equation}
Notice that, for all $n,p\in\N$, $y\to f_{n,p}(t,y)$ is globally
Lipschitz and satisfies the conditions of Step $1$. Therefore, for all
$n,p\in\N$, according to what has already been proved in Step 1,
there exists a unique solution $(Y^{n,p},Z^{n,p},V^{n,p},M^{n,p})$ to GBSDE \eqref{GBSDE-contraction}
associated with $(\xi,f_{n,p}+dR)$. Furthermore, it follows from
\eqref{boundedness-data} and Lemma~\ref{Lemma9} with $a=0$ and $q=t$
that there exists a universal constant $C_1>0$ such that, for all $n,p
\in\N$ and $t\in[0,T]$,
\begin{align}
\label{r-estimate}
&\big|Y^{n,p}_t\big|^2+\E \Biggl(\int_t^T\big|Z^{n,p}_s\big|^2 ds+\int_t^T\int_U\big|V^{n,p}_{s}(e)\big|^2\lambda(de) ds +\int_t^Td\bigl[M^{n,p}\bigr]_s\biggm|\cF_t \Biggr)\nn\\
&\quad \le C_1\E \Biggl[|\xi|^2+ \Biggl(\int_t^T\big|f(s,0,0,0)\big| ds \Biggr)^2+\Biggl(\int_t^Td|R|_s\Biggr)^2\biggm|\cF_t \Biggr]\nn\\
&\quad \le C_1\epsilon^2\bigl(2+T^2\bigr):=r^2.
\end{align}
Hence, for any $p> r$, the sequence $(Y^{n,p},Z^{n,p},V^{n,p},M^{n,p})$
does not depend on $p$. Then we denote it by $(Y^n,Z^n,V^n,M^n)$, and
it is a solution to GBSDE \eqref{GBSDE-contraction} associated with
$(\xi,f_{n}+dR)$. Moreover, now $f_n$
satisfies the conditions of Lemma~\ref{Lemma9} with a constant
independent of $n$, and thus the sequence $(Y^n,U^n,Z^n,V^n,M^n)$ is
uniformly bounded, that is,
\begin{align}
\label{uniform-boundedness-Y-Z-V-M-f-Hilbert}
&\Sup_{n\in\N}\E \Biggl[\int_0^T\big|Y^n_t\big|^2  dt+ \Biggl(\int_0^Tf_n\bigl(t,Y^n_t\bigr) dt \Biggr)^2+\int_0^T\big|Z^n_t\big|^2 dt\nn\\
&\quad +\int_0^T\int_U\big|V_t^n(e)\big|^2\lambda(de) dt+\bigl[M^n\bigr]_T \Biggr]\le C.
\end{align}
Let us set $U^n_t=f_n(t,Y^n_t)$ for $t\in[0,T]$.
Using the previous uniform estimate of the sequence $\{
(Y^n,Z^n,V^n,U^n,)\}_n$ and the Hilbert structure of $\L^2(\varOmega
\times[0,T])\times\cM^2\times\cL^2\times\mathfrak{H}^2$, we
deduce that we can extract subsequences, still denoted $\{n\}$, that
weakly converge to some process $(Y,Z,V,U)$ in $\L^2(\varOmega\times
[0,T])\times\cM^2\times\cL^2\times\mathfrak{H}^2$, where
$\mathfrak{H}^2$ denotes the set of $\cF_t$-progressively measurable
$\R^k$-valued processes $(U_t)_{t\in[0,T]}$ such that
\[
\|U\|_{\mathfrak{H}^2}:= \Biggl\{\E \Biggl[ \Biggl(\int_0^T|U_t|
 dt \Biggr)^2 \Biggr] \Biggr\}^{\frac{1}2}<+\infty.
\]
Now we deal with the convergence of the martingale $M^n$. By estimate
\eqref{uniform-boundedness-Y-Z-V-M-f-Hilbert} it follows that $\Sup
_{n\ge1}\E|M^n_T|^2<\infty$. Thus, there exists a subsequence, still
denoted $\{n\}$, such that $M_T^n$ converges weakly to some random variable
$M_T$ in $\L^2(\varOmega)$. Let $M_t$ denote the martingale with
terminal value $M_T$.

Next, following \cite[Prop.~2.4]{P99}, we deduce, by using the
martingale representation theorem (see Lemma~\ref
{loc-mart-decomposition}) and orthogonality that the following weak
convergence hold for the martingales in $\L^2(\varOmega)$: for each
$t\in[0,T]$,
\begin{align}
&\int_t^T Z^{n}_s
 dW_s\to\int_t^T Z_s
 dW_s, \ \int_t^T\int
_UV^{n}_s(e)\hat\pi(de,ds)\to\int
_t^T\int_UV_s(e)
\hat\pi (de,ds),\nn
\\
& \quad \text{and} \quad M^n_t\to M_t. \nn
\end{align}
Therefore, taking weak limits in the approximating equation,
we get that $(Y,Z,V,M)$ satisfies the GBSDE 
%
\begin{equation}
\nn Y_t=\xi+\int^T_tU_s
 ds+\int_t^T dR_s-\int
^T_tZ_s dW_s-\int
_{t}^{T}\int_UV_{s}(e)
\widehat{\pi}(de,ds)-\int_t^T dM_s.
\end{equation}
Finally, as in \cite[Prop.~2.4]{P99}, we can show that $U_t=f(t,Y_t)$.
This implies that $(Y,Z,V,M)$ solves GBSDE \eqref{GBSDE-contraction}
under the assumptions made so far in this step,
which completes the proof of Lemma~\ref{prop-GBSDE-contraction}.

{\bf Step 3.} In this step, we will show that assumption $(H_{gg})$
assumed so far can be weakened to $(H4)$. In fact, by the mean
of truncation technique we will show that, for given $(\varUpsilon,\varPsi
)\in\cM^2\times\cL^2$ and provided that $(H1)_{p=2}$--$(H6)$ and
\eqref{boundedness-data} hold, GBSDE~\eqref{GBSDE-contraction} has a
solution in $\varXi^2$. The idea behind the proof is to approximate $f$
by a sequence of functions $f_n$ satisfying assumption $(H_{gg})$.
Indeed, let $\theta_r$ be a smooth function such that $0\le\theta
_r\le1$ and satisfies for $r$ large enough:
\begin{equation}
\nn \theta_r(y)= \bigg\{ %
\begin{array}{@{}ll}
1\quad\text{for} \quad|y|\le r,\\
0 \quad\text{for} \quad|y|\ge r+1.
\end{array} %
\end{equation}
Let $\varphi_{r}(t)=\Sup_{|y|\le r}|f(t,y,0,0)-f(t,0,0,0)|\in\L
^1([0,T])$ and, for $n\in\N^*$,
denote $T_n(x)=\frac{xn}{|x|\vee n}$.
The approximation sequence $f_n$ is defined by
\begin{align}
\nn f_n(t,y,\varUpsilon,\varPsi)&=\bigl(f\bigl(t,y,T_n(
\varUpsilon_t),T_n(\varPsi _t)\bigr)-f(t,0,
\varUpsilon_t,\varPsi_t)\bigr)\frac{n}{\varphi_{r+1}(t)\vee n}
\\
&\quad +f(t,0,\varUpsilon_t,\varPsi_t).\nn
\end{align}
We also define a sequence $h_n$ that truncates $f_n$ for $|y|\ge r+1$:
\begin{align}
\nn h_n(t,y,\varUpsilon_t,\varPsi_t)&=
\theta_r(y) \bigl(f\bigl(t,y,T_n(\varUpsilon
_t),T_n(\varPsi_t)\bigr)-f(t,0,
\varUpsilon_t,\varPsi_t)\bigr)\frac{n}{\varphi
_{r+1}(t)\vee n}
\\
&\quad +f(t,0,\varUpsilon_t,\varPsi_t).\nn
\end{align}
Following the same reasoning as in the proof of \cite
[Thm.~4.2]{briandetal03}, it can be shown that $h_n$ still
satisfies the monotonicity condition $(H3)$ but with a positive
constant $C(r,k,n)$ depending on $r,k$, and $n$.
Then, the conditions of Lemma~\ref{prop-GBSDE-contraction} of the
previous step are fulfilled by the data $(\xi,h_n+dR)$. Consequently,
for each $n\in\N^*$,
the GBSDE \eqref{GBSDE-contraction} associated with $(\xi,h_n+dR)$,
admits a unique solution $(Y^n,Z^n,V^n,M^n)\in\varXi^2$. Moreover, since
$yh_n(t,y,\varUpsilon,\varPsi)\le|y|\|f(t,0,0,0)\|_{\infty
}+k|y|(|\varUpsilon|+\|\varPsi\|_{\cL_\lambda^2})$, $h_n$ satisfies the
condition of Lemma~\ref{Lemma9}. Consequently, applying Lemma~\ref
{Lemma9} with $a=0$ and
$q=t=0$, in view of the boundedness assumption \eqref
{boundedness-data}, we get similarly as in \eqref{r-estimate} that,
for each $n\in\N$,
the following estimates hold $d\P\times dt$-a.e.:
%
\begin{equation}
\label{r-estimate-2} \big|Y^{n}_t\big|\le r \quad\text{and} \quad\E
\Biggl(\int_0^T\big|Z^{n}_s\big|^2
 ds+\int_0^T\int_U\big|V^{n}_{s}(e)\big|^2
\lambda(de) ds +\bigl[M^{n}\bigr]_T \Biggr)\le
r^2.
\end{equation}
As a by-product, $(Y^n,Z^n,V^n,M^n)$ is a solution to the GBSDE \eqref
{GBSDE-contraction} associated with the data $(\xi,f_n+dR)$.
Next, we show as in \cite[Thm.~1]{KP14} (see also \cite
[Thm.~4.2]{briandetal03}) by using similar arguments that
$(Y^n,Z^n,V^n,M^n)$ is a Cauchy sequence
in $\varXi^2$, and its limit is $(Y,Z,V,M)\in\varXi^2$.

{\bf Step \xch{4.}{4:}} We now treat the general case. We want to get rid of the
boundedness condition \eqref{boundedness-data} used so far. To this
end, a truncation procedure. Indeed, under assumptions
$(H1)_{p=2}$--$(H6)$, we first set, for each $n\in\N^*$,
\begin{align*}
\xi_n&=T_n(\xi),\qquad f_n(t,y)=f_n(t,y,\varUpsilon_t,\varPsi_t)=f(t,y)-f(t,0)+T_n\bigl(f(t,0)\bigr),\\
R^n_t&=\int_0^t \1_{\{|R|_s\le n\}} dR_s.
\end{align*}
Then, according to the previous step, for each $n\in\N^*$, GBSDE
\eqref{GBSDE-contraction} associated with $(\xi_n,f_n+dR^n)$
has a unique solution $(Y^n,Z^n,V^n,M^n)\in\varXi^2$. Our goal now is to
show that $(Y^n,Z^n,V^n,M^n)$ is a Cauchy sequence in $\varXi^2$. Set
$(\bar Y,\bar Z,\bar V, \bar M)=(Y^m-Y^n,Z^m-Z^n,V^m-V^n,M^m-M^n)$.
Then $(\bar Y,\bar Z,\bar V, \bar M)$ is solution to the GBSDE
\begin{align}
\label{uniqueness-cauchy-p=2} \bar{Y}_t&=\xi_m-\xi_n+\int
_t^Td\bigl(R^m_s-R^n_s
\bigr)+\int^T_t\bigl(f_m
\bigl(s,Y^m_s\bigr)-f_n\bigl(s,Y^n_s
\bigr)\bigr) ds\nn
\\
&\quad -\int^T_t\bar Z_s
 dW_s-\int_{t}^{T}\int
_U\bar V_{s}(e)\widehat{\pi}(de,ds)-\int
_t^Td\bar M_s,\quad t\in[0,T].
\end{align}
Thanks to $(H3)$, $(H5)$, and $(H6)$, the generator of GBSDE \eqref
{uniqueness-cauchy-p=2} satisfies assumption $(A)$ with $f_t\equiv
T_m(f(t,0))-T_n(f(t,0))$
and $L=0$. Therefore, applying Lemma~\ref{Lemma9} with $a=0$ and
$q=t=0$ yields,
for all $n,m\in\N$,
\begin{align}
\label{Truncation-Cauchy-sequence}
&\E \Biggl[\Sup_{t\in[0,T]}|\bar Y_t|^2+\int_0^T|\bar Z_t|^2 dt+\int_{0}^{T}\int_U\big|\bar V_t(e)\big|^2\lambda(de)dt+[\bar M]_T \Biggr]\nn\\
&\quad \le C\E \Biggl[|\xi_m-\xi_n|^2+ \Biggl(\int_0^T\big|T_m\bigl(f(t,0)\bigr)-T_n\bigl(f(t,0)\bigr)\big| dt \Biggr)^2\nn\\
&\qquad + \Biggl(\int_0^Td\big|R^m-R^n\big|_s\Biggr)^2 \Biggr].
\end{align}
Obviously, the right-hand side of \eqref{Truncation-Cauchy-sequence}
tends to $0$ as $n,m\to\infty$. Therefore, $(Y^n,Z^n,\break V^n,M^n)$ is a
Cauchy sequence in $\varXi^2$, and its limit $(Y,Z,V,M)\in\varXi^2$ is an
$L^2$-solution of GBSDE \eqref{GBSDE-contraction}.

{\bf Step \xch{5.}{5:}} In this step, we will finally complete the proof of the
existence part of Theorem~\ref{existence-uniqueness-in-L2-multi-case}.
To this end, we consider a Picard's iteration
procedure. Set $(Y^0,Z^0,V^0,\break M^0)\,{=}\,(0,0,0,0)$ and define $\{
(Y_t^n,Z_t^n,V_t^n,M_t^n)_{t\in[0,T]}\}_{n\ge1}$ recursively in view
of Step $4$, for all $n\ge0$ and $t\in[0,T]$,
\begin{align}
\label{GBSDE-picard} Y^{n+1}_t&=\xi+\int^T_tf
\bigl(s,Y^{n+1}_s,Z^n_s,V^n_s
\bigr) ds+\int_t^TdR_s -\int
^T_tZ^{n+1}_s
 dW_s\nn
\\
&\quad -\int_{t}^{T}\int
_UV^{n+1}_{s}(e)\widehat{\pi }(de,ds)-
\int_t^TdM^{n+1}_s.
\end{align}
From the previous step it follows that, under assumptions
$(H1)_{p=2}$--$(H6)$, for each $n\ge0$, there exists a solution of
GBSDE \eqref{GBSDE-picard}. Let us set
$\delta Y^n:=Y^{n+1}-Y^n$, $\delta Z^n:=Z^{n+1}-Z^n$, $\delta
V^n:=V^{n+1}-V^n$, and $\delta M^n:=M^{n+1}-M^n$. 
$(\delta Y^n,\delta Z^n,\delta V^n,\delta M^n)$ solves the GBSDE
\begin{align*}
\label{GBSDE-picard-diff}
\delta Y^{n}_t&=\int^T_t
\bigl[f\bigl(s,Y^{n+1}_s,Z^n_s,V^n_s
\bigr)-f\bigl(s,Y^{n}_s,Z^{n-1}_s,V^{n-1}_s
\bigr) \bigr] ds-\int^T_t\delta
Z^{n}_s dW_s
\\
&\quad -\int_{t}^{T}\int_U
\delta V^{n}_{s}(e)\widehat\pi(de,ds) -\int
_t^Td\delta M^{n}_s,
\quad t\in[0,T].
\end{align*}
By assumptions $(H3)$, $(H5)$, and $(H6)$ we have that
{\allowdisplaybreaks
\begin{align*}
&\operatorname{\widehat{sgn}}\bigl(\delta y^n\bigr) \bigl(f\bigl(t,y^{n+1},z^n,v^n\bigr)-f\bigl(t,y^{n},z^{n-1},v^{n-1}\bigr)\bigr)\\
&\quad =\operatorname{\widehat{sgn}}\bigl(\delta y^n\bigr) \bigl[\bigl(f\bigl(t,y^{n+1},z^n,v^n\bigr)-f\bigl(t,y^{n},z^{n},v^{n}\bigr)\bigr)\\
&\qquad +\bigl(f\bigl(t,y^{n},z^n,v^n\bigr)-f\bigl(t,y^{n},z^{n-1},v^{n-1}\bigr)\bigr)\bigr]\\
&\quad \le\mu\big|\delta y^n\big|+L\big|\delta z^{n-1}\big|+L\big\|\delta v^{n-1}\big\|_{\cL^2_\lambda},
\end{align*}}%
which is assumption $(A)$ with $f_t= L|\delta z^{n-1}|+L\|\delta
v^{n-1}\|_{\cL^2_\lambda}$ and $L\equiv0$.
Thus, it follows from Lemma~\ref{Lemma9} with $a=q=t=0$ and H\"
{o}lder's inequality that there exists a constant $C>0$ such that
\begin{align*}
&\E \Biggl[ \Sup_{t\in[0,T]} \big|\delta Y^n_t\big|^2+ \int_0^T \big|\delta Z^n_s\big|^2ds +\int_0^T \int_U \big|\delta V^n_s(e)\big|^2 \lambda(de) ds + \bigl[\delta M^n\bigr]_T \Biggr] \nn\\
&\quad \le C \E \Biggl[ \int_0^T L \bigl(\big|\delta Z_t^{n-1}\big|+\big\|\delta V^{n-1}_t(e)\big\|_{\cL^2_\lambda} \bigr) dt \Biggr]^2\\
&\quad \le CL^2T\E \Biggl[\int_0^T\bigl(\big|\delta Z_t^{n-1}\big|^2+\big\|\delta V^{n-1}_t(e)\big\|^2_{\cL^2_\lambda} \bigr) dt\Biggr].
\end{align*}
Consequently, by induction we deduce that, for $n\ge2$,
\begin{align*}
&\E \Biggl[ \Sup_{t\in[0,T]} \big|\delta Y^n_t\big|^2+ \int_0^T \big|\delta Z^n_t\big|^2 dt +\int_0^T \int_U \big|\delta V^n_t(e)\big|^2 \lambda(de) dt + \bigl[\delta M^n\bigr]_T \Biggr] \nn\\
&\quad \le c^{n-1}\E \Biggl[ \int_0^T\big|\delta Z^1_t\big|^2 dt +\int_0^T \int_U \big|\delta V^1_t(e)\big|^2 \lambda(de) dt \Biggr],
\end{align*}
where $c=CL^2T$. Let us first assume, for a sufficiently small $T$,
that $c<1$. Then, since the remaining term of the right-hand side of
the last inequality is finite, we deduce that
$(Y^n,Z^n,V^n,M^n)$ is a Cauchy sequence in $\varXi^2$, and the limit
process $(Y,Z,V,M)$ is a solution to GBSDE \eqref{RBSDE-jumps} in $\varXi^2$.

For the general case, it suffices to subdivide the interval time
$[0,T]$ into a finite number of small intervals, and using the standard
arguments, we can prove the existence of a solution $(Y,Z,V,M)$ of
GBSDE \eqref{RBSDE-jumps} in $\varXi^2$ on the whole interval $[0,T]$.
This completes the proof of this step and thus of the whole proof of
Theorem~\ref{existence-uniqueness-in-L2-multi-case}.
\end{proof}
%
\subsection{Case $p\ge2$}
In this subsection, we study the issue of existence and uniqueness of
$\L^p$-solutions of GBSDE \eqref{RBSDE-jumps} in the case $p\ge2$.
Let us first give a priori estimates for the solution and their
variations induced by a variation of the data.
\begin{lemma} \label{Lp-estimates}
Let assumption $(A)$ hold, and let $(Y,Z,V,M)$ be a solution of GBSDE
\eqref{RBSDE-jumps}. Let us assume moreover that
%
\begin{equation}
\E \Biggl[|\xi|^p+ \Biggl(\int_0^Tf_t
 dt \Biggr)^p+|R|^p_T \Biggr]<+\infty.
\end{equation}
If $Y\in S^p$, then there exists a constant $C_p>0$, depending only on
$p$ and $T$, such that, for every $a\ge\mu+2L^2$,\vadjust{\eject}
\begin{align}
\label{ineq2.62}
&\E \Biggl[ \Sup_{t\in[0,T]} e^{apt}|Y_t|^p+ \Biggl( \int_0^T e^{2at}|Z_t|^2 dt \Biggr)^{p/2}\nn\\[-2pt]
& \qquad + \Biggl( \int_0^T \int_U e^{2at}\big|V_t(e)\big|^2\lambda(de) dt \Biggr)^{p/2} + e^{apT}[M]_T^{p/2}\Biggr] \nn\\[-2pt]
&\quad \le C \E \Biggl[e^{apT}|\xi|^{p} + \Biggl(\int_0^T e^{at}f_t dt\Biggr)^p+ \Biggl(\int_0^Te^{at}d|R|_t\Biggr)^p \Biggr].
\end{align}
\end{lemma}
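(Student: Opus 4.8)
The plan is to derive \eqref{ineq2.62} from two applications of It\^o's formula: one to $|Y_t|^p$, which will produce the bounds on $\E[\sup_t|Y_t|^p]$ and on the weighted quantity $\E\int_0^T|Y_s|^{p-2}|Z_s|^2\,ds$, and one to $|Y_t|^2$, which, raised to the power $p/2$, will produce the bounds for $Z$ in $\cM^p$, $V$ in $\cL^p$ and $M$ in $\M^p$ (in the spirit of \cite{briandetal03,P99,KP14}). As a preliminary step, exactly as at the beginning of the proof of Lemma~\ref{Lemma9}, I would fix $a\ge\mu+2L^2$ and pass to $\widetilde Y_t=e^{at}Y_t$, $\widetilde Z_t=e^{at}Z_t$, $\widetilde V_t=e^{at}V_t$, $d\widetilde M_t=e^{at}dM_t$: the transformed quadruple solves a GBSDE whose generator satisfies $(A)$ with $\widetilde f_t=e^{at}f_t$, $\widetilde\mu=\mu-a$, $\widetilde L=L$, so that $\widetilde\mu+2\widetilde L^2\le0$, and on the compact interval $[0,T]$ all the $p$-integrability requirements are unchanged. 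Hence it suffices to prove \eqref{ineq2.62} without the exponential factors, under the extra assumption $\mu+2L^2\le0$. Note also that, since $Y\in\cS^p\subset\cS^2$ and the $p$-integrability of the data entails \eqref{f_s-2-integrability}, Lemma~\ref{Lemma9} already yields $(Z,V,M)\in\cM^2\times\cL^2\times\M^2$; this $\L^2$-integrability legitimates the localizations below and makes the stochastic integrals that appear uniformly integrable martingales.

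First I would apply It\^o's formula to $|Y_t|^p$ (legitimate since $y\mapsto|y|^p$ is $C^2$ for $p\ge2$), after localizing by a sequence of stopping times that is sent to $T$ at the end via Fatou's lemma. The continuous second-order term contributes $\tfrac{p(p-1)}2\int_t^T|Y_s|^{p-2}|Z_s|^2\,ds$ plus a nonnegative $[M^c]$-term, which I keep on the left-hand side; the jump sum $\sum_{t<s\le T}\bigl(|Y_s|^p-|Y_{s-}|^p-p|Y_{s-}|^{p-2}\langle Y_{s-},\Delta Y_s\rangle\bigr)$ is nonnegative by convexity, and, by the elementary inequality $|y+\zeta|^p-|y|^p-p|y|^{p-2}\langle y,\zeta\rangle\ge c_p(|y|^2+|\zeta|^2)^{(p-2)/2}|\zeta|^2\ge c_p|y|^{p-2}|\zeta|^2$ valid for $p\ge2$, its compensated part dominates, in expectation, a positive multiple of $\int_t^T\int_U|Y_s|^{p-2}|V_s(e)|^2\lambda(de)\,ds$, which I also keep on the left. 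On the right-hand side, $(A)$ gives $|Y_s|^{p-2}Y_s\cdot f(s,Y_s,Z_s,V_s)\le|Y_s|^{p-1}f_s+\mu|Y_s|^p+L|Y_s|^{p-1}|Z_s|+L|Y_s|^{p-1}\|V_s\|_{\cL^2_\lambda}$, and Young's inequality --- with its parameters tuned so that the $|Z|^2$- and $\|V\|^2$-contributions are absorbed by the terms just kept --- turns the coefficient of $|Y_s|^p$ into a nonpositive number (this is where $\mu+2L^2\le0$ is used), which is discarded. One is thus left with an inequality of the form
\[
|Y_t|^p+c\int_t^T|Y_s|^{p-2}|Z_s|^2\,ds\le|\xi|^p+C\int_t^T|Y_s|^{p-1}f_s\,ds+C\int_t^T|Y_{s-}|^{p-1}\,d|R|_s+\mathcal N_t,
\]
with $\mathcal N$ a martingale. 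Taking $t=0$ and expectations, then $\sup_t$ and expectations, I would handle the $\sup$-terms by Young ($ab\le\tfrac14a^{p/(p-1)}+Cb^p$ with $a=\sup_t|Y_t|^{p-1}$ and $b=\int_0^Tf_s\,ds$ or $|R|_T$) and the martingale suprema by the BDG inequality, using $[\int_0^{\cdot}|Y_{s-}|^{p-2}Y_{s-}Z_s\,dW_s]_T^{1/2}\le(\sup_t|Y_t|^p)^{1/2}(\int_0^T|Y_s|^{p-2}|Z_s|^2\,ds)^{1/2}$ (and likewise for the $\widehat\pi$- and $dM$-integrals) together with Young. A routine absorption then gives $\E\bigl[\sup_t|Y_t|^p\bigr]+\E\int_0^T|Y_s|^{p-2}|Z_s|^2\,ds\le C\,\E\bigl[|\xi|^p+(\int_0^Tf_s\,ds)^p+|R|_T^p\bigr]$.

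It remains to bound $\E\bigl[(\int_0^T|Z_s|^2ds)^{p/2}+(\int_0^T\int_U|V_s(e)|^2\lambda(de)ds)^{p/2}+[M]_T^{p/2}\bigr]$. For this I would return to It\^o's formula for $|Y_t|^2$ --- precisely the computation of Steps~1--2 of Lemma~\ref{Lemma9} with $q=t=0$ --- which, after using $(A)$, $\mu+2L^2\le0$ and $2ab\le\tfrac12a^2+2b^2$, yields
\[
\int_0^T|Z_s|^2\,ds+\int_0^T\!\!\int_U\big|V_s(e)\big|^2\lambda(de)\,ds+[M]_T\le C\Bigl(|\xi|^2+\sup_t|Y_t|\!\int_0^T\!f_s\,ds+\sup_t|Y_t|\!\int_0^T\!d|R|_s+|\mathcal N'_T|\Bigr),
\]
where $\mathcal N'$ collects the martingales $\int Y_sZ_s\,dW_s$, $\int\!\int Y_{s-}V_s(e)\widehat\pi(de,ds)$, $\int Y_{s-}\,dM_s$. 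Raising to the power $p/2$ and taking expectations, the terms $\sup_t|Y_t|^{p/2}(\int_0^Tf_s\,ds)^{p/2}$ and $\sup_t|Y_t|^{p/2}(\int_0^Td|R|_s)^{p/2}$ split by Young ($\le\varepsilon\sup_t|Y_t|^p+C_\varepsilon(\int f)^p$, resp.\ $C_\varepsilon|R|_T^p$), with $\E\sup_t|Y_t|^p$ already bounded, while the martingale term is controlled by BDG: $\E|\mathcal N'_T|^{p/2}\le C\,\E\bigl(\sup_t|Y_t|^2\bigl(\int_0^T|Z_s|^2ds+\int_0^T\int_U|V_s(e)|^2\lambda(de)ds+[M]_T\bigr)\bigr)^{p/4}\le\varepsilon\,\E\bigl(\int_0^T|Z_s|^2ds+\int_0^T\int_U|V_s(e)|^2\lambda(de)ds+[M]_T\bigr)^{p/2}+C_\varepsilon\,\E\sup_t|Y_t|^p$. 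Choosing $\varepsilon$ small and moving the $\varepsilon$-terms to the left-hand side (where the three quadratic-variation quantities are bundled) gives the remaining bound; in particular $(Z,V,M)\in\cM^p\times\cL^p\times\M^p$. Combining with the previous paragraph and undoing the preliminary change of variables then yields \eqref{ineq2.62}.

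The step I expect to be the main obstacle is the jump term in the It\^o expansion of $|Y_t|^p$: one must justify rigorously that its compensated part has the right sign and dominates a suitable positive multiple of $\int_0^T\int_U|Y_s|^{p-2}|V_s(e)|^2\lambda(de)\,ds$ --- large jumps, and possible common jump times of $M$ and of the Poisson integral, call for care with the cross terms and with the passage to the compensator --- and, all along, one must track the localization so that every stochastic integral appearing is a genuine uniformly integrable martingale; this is exactly where the $\cS^p$-assumption on $Y$ and the $\L^2$-bounds on $(Z,V,M)$ coming from Lemma~\ref{Lemma9} are used.
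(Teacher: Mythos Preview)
Your approach is essentially the paper's: the same preliminary exponential change of variables reducing to $a=0$ and $\mu+2L^2\le0$, and the same two It\^o computations (on $|Y_t|^p$ and on $|Y_t|^2$ raised to the power $p/2$) combined with assumption $(A)$, Young's inequality and BDG; the jump terms are handled exactly via the lower bounds you describe, which are the paper's estimates \eqref{estimate-traceZ-Lp-geq-2}--\eqref{estimate-poisson-part-Lp-geq-2}. The only cosmetic differences are that the paper performs the two steps in the opposite order --- first bounding $(Z,V,M)$ in $\cM^p\times\cL^p\times\M^p$ in terms of $\|Y\|_{\cS^p}$ (its Step~1), and then using this to make the stochastic integrals in the $|Y|^p$-expansion genuine uniformly integrable martingales (its Step~2) --- and that in Step~2 it retains a $\int_t^T|Y_s|^p\,ds$ term on the right and closes with Gronwall's lemma, rather than tuning the Young constants to make its coefficient nonpositive as you do.
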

\begin{proof}
The proof is divided into two steps. By an already used argument (see
Lem\-ma~\eqref{Lemma9}) we can assume w.l.o.g. that $\mu+2L^2\le0$ and
take $a=0$.

{\bf Step 1.} First, we show that
\begin{align}
\label{gbsde-ito2-Lp-pgreaterthan2-T}
&\E \Biggl[ \Biggl(\int_0^{T}|Z_s|^2 ds \Biggr)^\frac{p} {2}+ \Biggl(\int_0^{T}\int_U\big|V_{s}(e)\big|^2 \lambda(de)ds\Biggr)^\frac{p} {2} + \Biggl(\int_0^{T}d[M]_s\Biggr)^\frac{p} {2} \Biggr]\nn\\[-2pt]
&\quad \le C(p,T) \Biggl[\E\Sup_{t\in[0,T]}|Y_{t}|^p+\Biggl(\int_{0}^Tf_s ds\Biggr)^p + \Biggl(\int_0^{T}d|R|_s\Biggr)^p \Biggr].
\end{align}
Indeed, define the sequence of stopping times $\t_n$ for $n\in\N$:
\[
\tau_n=\inf\Biggl\{t>0;\int_0^t|Z_s|^2ds+
\int_{0}^{t}\int_U\big|V_{s}(e)\big|^2
\lambda de(ds)+[M]_t>n\Biggr\}\wedge T.
\]
By It\^o's formula on $|Y_t|^2$,
\begin{align*}
&|Y_0|^2+\int_{0}^{\t_n}|Z_s|^2 ds+\int_{0}^{\t_n}\int_U\big|V_{s}(e)\big|^2\pi(de,ds)+\int_{0}^{\t_n}d[M]_s\\[-2pt]
&\quad =|Y_{\t_n}|^2+2\int_{0}^{\t_n}Y_{s}f(s,Y_s,Z_s,V_s) ds+2\int_{0}^{\t_n}Y_{s-}dR_s-2\int_{0}^{\t_n} Y_{s}Z_s dW_s\\[-2pt]
&\qquad -2\int_{0}^{\t_n}\int_UY_{s-}V_{s}(e)\widehat{\pi}(de,ds)-2\int_{0}^{\t_n}Y_{s-}dM_s.
\end{align*}
But from assumption $(A)$, combined with the inequality $2ab\le\frac
{1}{\varepsilon}a^2+\varepsilon b^2$ for $\varepsilon>0$, since $\mu
+2L^2<0$, we have
\begin{align}
\label{gbsde-ito1-Lp-} 2Y_{s}f(s,Y_s,Z_s,V_s)&
\le2\mu |Y_{s}|^2+2|Y_{s}||f_s|+2L|Y_{s}||Z_{s}|+2L|Y_{s}|
\big\|V_{s}(e)\big\|_{\cL
^2_\lambda}\nn
\\[-2pt]
&\le \biggl(\frac{1}{\varepsilon}+2\bigl(\mu+L^2\bigr)
\biggr)|Y_{s}|+2|Y_{s}|f_s+
\frac{1}{2}|Z_{s}|^2+\varepsilon
\big\|V_{s}(e)\big\| ^2_{\cL^2_\lambda}\nn
\\[-2pt]
&\le\frac{1}{\varepsilon}|Y_{s}|^2+2|Y_{s}|f_s+
\frac
{1}{2}|Z_{s}|^2+\varepsilon
\big\|V_{s}(e)\big\|^2_{\cL^2_\lambda}.
\end{align}
Thus, since $\t_n\le T$, we deduce that
{\allowdisplaybreaks
\begin{align*}
&\frac{1}2\int_{0}^{\t_n}|Z_s|^2 ds+\int_{0}^{\t_n}\int_U\big|V_{s}(e)\big|^2\pi(de,ds)+\int_{0}^{\t_n}d[M]_s\\[-1pt]
&\quad \le\Sup_{t\in[0,T]}|Y_t|^2+\frac{1}{\varepsilon}\int_0^T|Y_{s}|^2 ds+2\Sup_{t\in[0,T]}|Y_t|\int_{0}^{T}f_s ds\\
&\qquad +2\Sup_{t\in[0,T]}|Y_t|\int_{0}^{T}d|R|_s+\varepsilon\int_0^{\t_n}\int_U\big|V_{s}(e)\big|^2\lambda(de) ds+2\Biggl\llvert \int_0^{\t_n}Y_{s}Z_s dW_s\Biggr\rrvert\\
&\qquad +2\Biggl\llvert \int_0^{\t_n}\int_UY_{s-}V_{s}(e)^2\widehat{\pi }(de,ds)\Biggr\rrvert +2\Biggl\llvert \int_0^{\t_n}Y_{s-}dM_s\Biggr\rrvert\\
&\quad \le \biggl(3+\frac{T}{\epsilon} \biggr)\Sup_{t\in[0,T]}|Y_t|^2+\Biggl(\int_{0}^{T}f_s ds\Biggr)^2+ \Biggl(\int_{0}^{T}d|R|_s\Biggr)^2\\
&\qquad +\varepsilon\int_0^{\t_n}\int_U\big|V_{s}(e)\big|^2 \lambda(de) ds+\Biggl\llvert \int_0^{\t_n} Y_{s}Z_s dW_s\Biggr\rrvert\\
&\qquad +\Biggl\llvert \int_0^{\t_n}\int_UY_{s-}V_{s}(e)^2\widehat{\pi }(de,ds)\Biggr\rrvert +\Biggl\llvert \int_0^{\t_n}Y_{s-}dM_s\Biggr\rrvert.
\end{align*}}%
It follows that there exists a constant $c_p>0$, depending only on $p$,
such that
\begin{align}
\label{gbsde-ito2-Lp-pgreaterthan2}
& \Biggl(\int_0^{\t_n}|Z_s|^2 ds \Biggr)^\frac{p} {2}+ \Biggl(\int_0^{\t_n}\int_U\big|V_{s}(e)\big|^2 \pi(de,ds)\Biggr)^\frac{p} {2} + \Biggl(\int_0^{\t_n}d[M]_s\Biggr)^\frac{p} {2}\nn\\
&\quad \le c_p \Biggl[ \biggl(3+\frac{T}{\epsilon} \biggr)^\frac{p}{2}\Sup _{t\in[0,T]}|Y_{t}|^p+ \Biggl(\int_{0}^Tf_s ds \Biggr)^p+ \Biggl(\int_0^{\t_n}d|R|_s\Biggr)^p\nn\\
&\qquad +\epsilon^\frac{p} {2} \Biggl(\int_0^{\t_n}\int_U\big|V_{s}(e)\big|^2 \lambda(de) ds\Biggr)^\frac{p} {2}+\Biggl\llvert \int_0^{\t_n}Y_{s}Z_s dW_s\Biggr\rrvert ^\frac{p} {2}\nn\\
&\qquad +\Biggl\llvert \int_0^{\t_n}\int_UY_{s-}V_{s}(e)^2\widehat {\pi}(de,ds)\Biggr\rrvert ^\frac{p} {2}+\Biggl\llvert \int_0^{\t_n}Y_{s-} dM_s\Biggr\rrvert ^\frac{p} {2} \Biggr].
\end{align}
Since $\frac{p}{2}\ge1$, we can apply the BDG inequality to obtain
\begin{align}
c_p\E\Biggl\llvert \int_0^{\t_n}Y_{s}Z_s dW_s\Biggr\rrvert ^\frac{p} {2}&\le d_p\E\Biggl[ \Biggl(\int_0^{\t_n}|Y_{s}|^2|Z_{s}|^2 ds \Biggr)^\frac {p} {4} \Biggr]\nn\\
&\le\frac{d^2_p}{4}\E\Bigl(\sup_{t\in[0,T]}|Y_{t}|^p\Bigr)+\frac{1}{2}\E \Biggl(\int_0^{\t_n}|Z_s|^2 ds \Biggr)^\frac{p} {2},\label{BDG-Brownian-p-greater-than-2}\\[9pt]
c_p\E\Biggl\llvert \int_0^{\t_n}Y_{s-}dM_s\Biggr\rrvert ^\frac{p} {2}&\le d_p\E \Biggl[ \Biggl(\int_0^{\t_n}|Y_{s-}|^2d[M]_{s}\Biggr)^\frac{p} {4} \Biggr]\nn\\
&\le\frac{d^2_p}{4}\E\Bigl(\sup_{t\in[0,T]}|Y_{t}|^p\Bigr)+\frac{1}{2}\E [M]_{\t_n}^\frac{p} {2},
\end{align}
and
{\allowdisplaybreaks
\begin{align}
\label{BDG-poisson-p-greater-than-2}
&c_p\E\Biggl\llvert \int_0^{T}\int_UY_{s-}V_s(e)\widehat\pi(de,ds)\Biggr\rrvert ^\frac{p} {2}\nn\\
&\quad \le d_p\E \Biggl[ \Biggl(\int_0^{\t_n}|Y_{s}|^2\big|V_{s}(e)\big|^2\pi (de,ds) \Biggr)^\frac{p} {4} \Biggr]\nn\\
&\quad \le\frac{d^2_p}{4}\E\Bigl(\sup_{t\in[0,T]}|Y_{t}|^p\Bigr) +\frac{1}{2}\E \Biggl(\int_0^{\t_n}\big|V_{s}(e)\big|^2\pi(de,ds) \Biggr)^\frac{p} {2}.
\end{align}}%
Plugging estimates \eqref{BDG-Brownian-p-greater-than-2}--\eqref
{BDG-poisson-p-greater-than-2} into \eqref
{gbsde-ito2-Lp-pgreaterthan2} and then taking the expectation, we get
\begin{align}
&\frac{1}{2}\E \Biggl[ \Biggl(\int_0^{\t_n}|Z_s|^2 ds \Biggr)^\frac {p} {2}+ \Biggl(\int_0^{\t_n}\int_U\big|V_{s}(e)\big|^2 \pi(de,ds)\Biggr)^\frac{p} {2} + \Biggl(\int_0^{\t_n}d[M]_s\Biggr)^\frac{p} {2} \Biggr]\nn\\
&\quad \le C(p,T,\epsilon)\E\Sup_{t\in[0,T]}|Y_{t}|^p+c_p\E \Biggl(\int_{0}^Tf_s ds\Biggr)^p+c_p\E \Biggl(\int_0^{\t_n}d|R|_s\Biggr)^p\nn\\
&\qquad +c_p\epsilon^\frac{p} {2}\E \Biggl[ \Biggl(\int_0^{\t_n}\int_U\big|V_{s}(e)\big|^2\lambda(de) ds \Biggr)^\frac{p} {2} \Biggr] .\nn
\end{align}
By \cite[Section~4]{Lenglartetal79} or \cite[Lemma 2.1]{Dzpharidze90}
we have that, for some constant $\eta_p>0$,
%
\begin{equation}
\E \Biggl(\int_0^{\t_n}\int_U\big|V_{s}(e)\big|^2
\lambda(de) ds \Biggr)^\frac{p} {2} \le\eta_p\E \Biggl(\int
_0^{\t_n}\int_U\big|V_{s}(e)\big|^2
\pi (de,ds) \Biggr)^\frac{p} {2}.
\end{equation}
Thus, choosing $\epsilon$ small enough and depending only on $p$,
we deduce that 
\begin{align*}
&\E \Biggl[ \Biggl(\int_0^{\t_n}|Z_s|^2 ds \Biggr)^\frac {p} {2}+ \Biggl(\int_0^{\t_n}\int_U\big|V_{s}(e)\big|^2 \lambda(de)ds\Biggr)^\frac{p} {2} + \Biggl(\int_0^{\t_n}d[M]_s\Biggr)^\frac{p} {2} \Biggr]\\
&\quad \le \widetilde C(p,T)\E\Sup_{t\in[0,T]}|Y_{t}|^p+\widetilde {C}_p\E \Biggl[ \Biggl(\int_{0}^Tf_s ds \Biggr)^p + \Biggl(\int_0^{T}d|R|_s\Biggr)^p \Biggr].
\end{align*}
Finally, letting $n$ to $+\infty$ and using Fatou's lemma, \eqref
{gbsde-ito2-Lp-pgreaterthan2-T} follows.

{\bf Step 2.} Since $p\ge2$, we can apply It\^o's formula 
with the $\cC^2$ function $|y|^p$ to $|Y_t|^p$. Note that
\[
\frac{\partial\theta}{\partial y_i}(y)=py_i|y|^{p-2},\qquad
\frac
{\partial^2 \theta}{\partial y_i \partial y_j}(y)=p|y|^{p-2}\delta _{i,j}+p(p-2)y_iy_j|y|^{p-4},
\]
where $\delta_{i,j}$ is the Kronecker delta. Thus, for every $t\in
[0,T]$, we have
\begin{align}
| Y_{t}|^p &= |\xi|^p+p\int_{t}^{T}Y_{s-}|Y_{s-}|^{p-2}dR_s+p\int_{t}^{T} Y_s | Y_s|^{p-2}f(s,Y_s,Z_s,V_s)ds\nonumber\\
& \quad- p \int_{t}^{T} Y_{s-}|Y_{s-}|^{p-2} dM_s - p \int_{t}^{T} Y_{s}|Y_{s}|^{p-2}Z_s dW_s \nn\\
& \quad- p \int_{t}^{T} \!\int_U\bigl( Y_{s-} |Y_{s-}|^{p-2} V_s(e)\bigr) \widehat{\pi}(de,ds) \,{-}\, \frac{1}{2}\int_t^T \!\!\operatorname{Trace}\bigl(D^2\theta(Y_s)Z_sZ_s^t\bigr) ds\nn\\
& \quad- \int_{t}^{T} \!\int_U\bigl( \big|Y_{s-} + V_s(e) \big|^p -|Y_{s-}|^p - pY_{s-} |Y_{s-}|^{p-2}V_s(e) \bigr) \pi(de,ds)-\aleph_t,\label{ito-formula-cauchy-p-greater-than-2}
\end{align}
where
\begingroup
\abovedisplayskip=7.2pt
\belowdisplayskip=7.2pt
\begin{align}
\aleph_t&=\frac{1}{2} \int_t^T
\sum_{1\le i,j\le d}\frac{\partial
^2 \theta}{\partial y_i \partial y_j}(Y_s)d
\bigl[M^i,M^j\bigr]^c_s\nn
\\
&\quad +\sum_{t< s \leq T} \bigl( |Y_{s-}
+ \Delta M_s |^p - |Y_{s-}|^p -
pY_{s-} |Y_{s-}|^{p-2}\Delta M_s
\bigr).\nn
\end{align}
Following arguments from \cite[Prop.~2]{KP14}, we have that
\begin{eqnarray}
&\displaystyle \operatorname{Trace}\bigl(D^2\theta(y)zz^t\bigr)\ge p|y|^{p-2}|z|^2 ,\label{estimate-traceZ-Lp-geq-2}&\\
&\displaystyle \aleph_t \ge\alpha_p\int_t^T|Y_{s-}|d[M]_s,\label{estimate-martingale-part-Lp-geq-2}&
\end{eqnarray}
and
\begin{align}
\label{estimate-poisson-part-Lp-geq-2}
&- \int_{t}^{T} \int_U \bigl( \big|Y_{s-} + V_s(e)\big|^p - |Y_{s-}|^p - pY_{s-}|Y_{s-}|^{p-2} V_s(e) \bigr) \pi(de,ds)\nn\\
&\quad \le-p(p-1)3^{1-p}\int_t^T|Y_{s-}|^{p-2}\big|V_s(e)\big|^2\pi(de,ds),
\end{align}
where $\alpha_p=\min(\frac{p}{2},p(p-1)3^{1-p})$.

Consequently, in view of estimates \eqref{estimate-traceZ-Lp-geq-2},
\eqref{estimate-martingale-part-Lp-geq-2}, and \eqref
{estimate-poisson-part-Lp-geq-2}, Eq.~\eqref
{ito-formula-cauchy-p-greater-than-2} becomes
\begin{align}
&| Y_{t}|^p +\alpha_p\int_t^T | Y_s|^{p-2}|Z|^2_s ds+\alpha_p\int_{t}^{T}\int_U|Y_s|^{p-2}\big|V_s(e)\big|^2\pi(de,ds)\nn\\
&\qquad +\alpha_p \int_t^T|Y_s|^{p-2}d[M]^c_s+\alpha_p\sum_{t< s \leq T}|Y_s|^{p-2}|\Delta M_s|^2\nn\\
&\quad \le |\xi|^p+p\int_{t}^{T}Y_{s-}|Y_{s-}|^{p-2}dR_s+p\int_{t}^{T} Y_s | Y_s|^{p-2}f(s,Y_s,Z_s,V_s)ds\nn\\
&\qquad - p \int_{t}^{T} Y_{s-}|Y_{s-}|^{p-2} dM_s- p \int_{t}^{T}Y_{s}|Y_{s}|^{p-2} Z_s dW_s\nn\\
&\qquad - p \int_{t}^{T} \int_U Y_{s-} |Y_{s-}|^{p-2} V_s(e)\,\widehat{\pi }(de,ds).
\end{align}
But from assumption $(A)$ and the fact that $\mu\le-2L^2\le0$ (since
$\mu+2L^2\le0$) we deduce by using the inequality $ab\le\frac
{1}{2\epsilon}a^2+\frac{\epsilon}{2}b^2$ that
%
\begin{eqnarray}
\label{estimate-Yf-Lp-p>2} Y_{s}f(s,Y_s,Z_s,V_s)
\le\frac{L^2}{\epsilon}|Y_{s}|^2+|Y_{s}|f_s+
\frac{\epsilon
}{2}|Z_{s}|^2+\frac{\epsilon}{2}\int
_U\big|V_{s}(e)\big|^2 \lambda(de).
\end{eqnarray}
Choosing $\epsilon=\frac{\alpha_p}{p}$, we obtain in view of the
last inequality that
{\allowdisplaybreaks
\begin{align}
&| Y_{t}|^p +\frac{\alpha_p}{2}\int_t^T | Y_s|^{p-2}|Z|^2_s ds+\alpha_p\int_{t}^{T}\int_U|Y_s|^{p-2}\big|V_s(e)\big|^2\pi(de,ds)\nn\\
&\qquad +\alpha_p \int_t^T|Y_s|^{p-2}d[M]_s\nn\\
&\quad \le |\xi|^p+\frac{pL^2}{\alpha_p}\int_{t}^{T}|Y_s|^{p} ds+p\int_{t}^{T} | Y_s|^{p-1}f_s ds+p\int_{t}^{T} |Y_{s-}|^{p-1}dR_s\nn\\
&\qquad +\frac{\alpha_p}{2}\int_{t}^{T}|Y_s|^{p-2}\big\|V_s(e)\big\|^2_{\cL^2_{\lambda}} ds-p \int_{t}^{T} Y_{s}|Y_{s}|^{p-2} Z_s dW_s\nn\\
&\qquad - p \int_{t}^{T} \int_U Y_{s-} |Y_{s-}|^{p-2} V_s(e)\,\widehat{\pi}(de,ds) - p \int_{t}^{T}Y_{s-} |Y_{s-}|^{p-2} dM_s.\label{ito-formula-cauchy-p-greater-than-2-ineq2}
\end{align}}%
\endgroup
Let us set $X=|\xi|^p+\frac{pL^2}{\alpha_p}\int_{t}^{T}|Y_s|^{p}\,
ds+p\int_{t}^{T} | Y_s|^{p-1}|f_s| ds+p\int_{t}^{T}
|Y_{s-}|^{p-1}dR_s$, $\varUpsilon_t=\int_{0}^{t} Y_{s}|Y_{s}|^{p-2} Z_s
dW_s$, $\varTheta_t=\int_{0}^{t} \int_U Y_{s-} |Y_{s-}|^{p-2} V_s(e)\,
\widehat{\pi}(de,ds)$, and
$\varGamma_t=\break \int_{0}^{t} Y_{s-} |Y_{s-}|^{p-2} dM_s$.

It follows from the BDG inequality that 
$\varUpsilon_t$, $\varTheta_t$, and $\varGamma_t$ are uniformly integrable
martingales. Indeed, by Young's inequality we have
\begin{align}
\E\bigl([\varUpsilon]_T^\frac{1}2\bigr)&\le\E\Biggl[\Sup_{t\in[0,T]}|Y_{t}|^{p-1} \Biggl(\int_0^{T}|Z_s|^2 ds\Biggr)^\frac {1} {2} \Biggr]\nn\\
&\le\frac{p-1}{p}\E\Bigl(\Sup_{t\in[0,T]}|Y_{t}|^p\Bigr)+\frac{1}{p}\E \Biggl(\int_0^{T}|Z_s|^2 ds \Biggr)^\frac{p} {2},\label{Lp-martingale-uniformly-integ-BDG-Young}\\[6pt]
\E\bigl([\varTheta]_T^\frac{1}2\bigr)&\le\E\Biggl[\Sup_{t\in[0,T]}|Y_{t}|^{p-1} \Biggl(\int_0^{T}\big|V_s(e)\big|^2\pi(de,ds) \Biggr)^\frac{1} {2} \Biggr]\nn\\
&\le\frac{p-1}{p}\E\Bigl(\Sup_{t\in[0,T]}|Y_{t}|^p\Bigr)+\frac{1}{p}\E \Biggl(\int_0^{T}\big\|V_s(e)\big\|^2_{\cL^2_\lambda}ds \Biggr)^\frac{p}{2},\label{Lp-martingale-uniformly-integ-BDG-Young-2}
\end{align}
and
%
\begin{equation}
\label{Lp-martingale-uniformly-integ-BDG-Young-3} \E\bigl([\varGamma]_T^\frac{1}2\bigr)\le\E
\Bigl[\Sup_{t\in
[0,T]}|Y_{t}|^{p-1}[M]_T^\frac{1}
{2} \Bigr] \le\frac{p-1}{p}\E\Bigl(\Sup_{t\in[0,T]}|Y_{t}|^p
\Bigr)+\frac{1}{p}\E [M]_T^\frac{p} {2}.
\end{equation}
The claim holds since the last terms of \eqref
{Lp-martingale-uniformly-integ-BDG-Young}, \eqref
{Lp-martingale-uniformly-integ-BDG-Young-2}, and \eqref
{Lp-martingale-uniformly-integ-BDG-Young-3} are finite. This is due to
the fact that $Y\in\cS^p$, which implies by the first step of the
proof that $Z\in\cM^p$, $V\in\cL^p$,
and $M\in\M^p$.
Moreover, we have
%
\begin{equation}
\label{Lp-2-equa-Y-pi-lambda} \E\int_{t}^{T} \int
_U|Y_s|^{p-2}\big|V_s(e)\big|^2
\pi(de,ds)=\E\int_{t}^{T}|Y_s|^{p-2}
\big\|V_s(e)\big\|^2_{\cL^2_{\lambda}} ds.
\end{equation}
Hence, in view of \eqref{Lp-2-equa-Y-pi-lambda}, taking the
expectation in \eqref{ito-formula-cauchy-p-greater-than-2-ineq2} yields
\begin{align}
\label{ineq3.40}
&\frac{\alpha_p}{2}\E\int_t^T |Y_s|^{p-2} |Z|^2_s ds+\frac{\alpha_p}{2}\E\int_{t}^{T}|Y_s|^{p-2}\big\|V_s(e)\big\|^2_{\cL^2_{\lambda}} ds\nn\\
&\qquad +\alpha_p \E\int_t^T |Y_s|^{p-2}d[M]_s\nn\\
&\quad \le\E[X].
\end{align}

Furthermore, coming back to \eqref
{ito-formula-cauchy-p-greater-than-2-ineq2}, we deduce in view of
\eqref{ineq3.40} that\vadjust{\eject}
\begin{align}
\E\Sup_{s\in[t,T]}|Y_{s}|^p &\le2\E X+p\E \Biggl(\sup_{s\in[t,T]}\Biggl\llvert \int_s^T\int_UY_{u^-}|Y_{u^-}|^{p-2} V_u(e)\widehat {\pi}(de,du)\Biggr\rrvert \Biggr)\nn\\[-1pt]
&\quad +p\E \Biggl(\sup_{s\in[t,T]}\Biggl\llvert \int_s^TY_{u}|Y_{u}|^{p-2}Z_u dW_u\Biggr\rrvert \Biggr)\nn\\[-1pt]
&\quad +p\E \Biggl(\sup_{s\in[t,T]}\Biggl\llvert \int_s^TY_{u^-}|Y_{u^-}|^{p-2} dM_u\Biggr\rrvert \Biggr).
\end{align}
The BDG inequality implies that
\begin{align}
&p\E \Biggl(\sup_{s\in[t,T]}\Biggl\llvert \int_s^TY_{u}|Y_{u}|^{p-2}Z_u dW_u\Biggr\rrvert \Biggr)\nn\\[-1pt]
&\quad \le d_p \E \Biggl[ \Biggl(\int_t^T\,|Y_{s}|^{2p-2}|Z_s|^2ds\Biggr)^\frac{1}2 \Biggr]\nn\\[-1pt]
&\quad \le\frac{1}4\E\Sup_{s\in[t,T]}|Y_{s}|^p+4d^2_p\E \Biggl(\int_t^T\, |Y_{s}|^{p-2}|Z_s|^2du\Biggr),\label{BDG-step2-BM-Lp}\\
&p\E \Biggl(\sup_{s\in[t,T]}\Biggl\llvert \int_s^T\int_UY_{u^-}|Y_{u^-}|^{p-2} V_u(e)\widehat{\pi}(de,du)\Biggr\rrvert \Biggr)\nn\\[-1pt]
&\quad \le d_p\E \Biggl(\int_t^T\int_U|Y_{u}|^{2p-2}\big|V_s(e)\big|^2\pi (de,ds) \Biggr)^\frac{1}2\nn\\[-1pt]
&\quad \le\frac{1}4\E\Sup_{s\in[t,T]}|Y_{s}|^p+4d^2_p\E \Biggl(\int_t^T\, |Y_{u}|^{p-2}\big\|V(e)_u\big\|^2_{\cL_\lambda^2}du \Biggr),\label{BDG-step2-Poisson-Lp}
\end{align}
and
\begin{align}
&p\E \Biggl(\sup_{s\in[t,T]}\Biggl\llvert \int_s^TY_{u^-} |Y_{u^-}|^{p-2}dM_u\Biggr\rrvert \Biggr)\nn\\[-2pt]
&\quad \le d_p\E \bigl(|Y_{s}|^{2p-2}d[M]_s\bigr)^\frac{1}2\nn\\[-2pt]
&\quad \le\frac{1}4\E\Sup_{s\in[t,T]}|Y_{s}|^p+4d^2_p\E \Biggl(\int_t^T\, |Y_{s}|^{p-2}d[M]_s\Biggr).\label{BDG-step2-cadlag-Mart-Lp}
\end{align}
Thus, combining estimates \eqref{BDG-step2-BM-Lp}--\eqref
{BDG-step2-cadlag-Mart-Lp} with \eqref{ineq3.40}, we deduce that
%
\begin{equation}
\label{ineq3.47} \E\Sup_{s\in[t,T]}|Y_{s}|^p\le
C_p \E[X].
\end{equation}
But, applying Young's inequality, we get
{\allowdisplaybreaks
\begin{align}
pC_p\E\int_t^T\,|Y_s|^{p-1}|f_s|
 ds&\le pC_p\E \Biggl(\Sup_{s\in
[t,T]}|Y_{t}|^{p-1}
\int_t^Tf_s ds \Biggr)\nn
\\[-2pt]
&\le\frac{1}6\E\Sup_{s\in[t,T]}|Y_{t}|^{p}+\xch{d'_p
\E \Biggl(\int_t^Tf_s ds
\Biggr)^p\nn,}{d'_p
\E \Biggl(\int_t^Tf_s ds
\Biggr)^p\nn}
\end{align}}%
and
\begingroup
\abovedisplayskip=4pt
\belowdisplayskip=4pt
\begin{equation}
\nn pC_p\E\int_t^T
\,|Y_{s-}|^{p-1} dR_s\le\frac{1}6E
\Sup_{s\in
[t,T]}|Y_{t}|^{p}+d''_p
\E \Biggl(\int_t^{T}d|R|_s
\Biggr)^p.
\end{equation}
Consequently, rearranging \eqref{ineq3.47} in view of the two last
estimates implies
\begin{align}
\E\Sup_{s\in[t,T]}|Y_{s}|^p&\le C'_p \E \Biggl[|\xi|^p+ \Biggl(\int_t^Tf_s ds \Biggr)^p+\Biggl(\int_t^{T}d|R|_s\Biggr)^p \Biggr]\nn\\[-2pt]
&\quad +C''_p\int_t^T\E\Sup_{u\in[s,T]}|Y_{u}|^p ds, \hspace{3mm} t\in[0,T].\nn
\end{align}
Finally, using Gronwall's lemma, we deduce that
%
\begin{equation}
\E\Sup_{t\in[0,T]}|Y_{t}|^p\le C'_pe^{C''_pT}
\E \Biggl[|\xi |^p+ \Biggl(\int_0^Tf_s
 ds \Biggr)^p+ \Biggl(\int_0^{T}d|R|_s
\Biggr)^p \Biggr].
\end{equation}
\endgroup
This, combined with \eqref{gbsde-ito2-Lp-pgreaterthan2-T}, ends the proof.
\end{proof}
\begin{lemma}\label{Lp-estimates-variation}
Let $(\xi,f,R)$ and $(\xi',f',R')$ be two sets of data, each
satisfying assumptions $(H1)$--$(H6)$.
Let $(Y,Z,V,M)$ and $(Y',Z',V',M')$ denote respectively an $\L
^p$-solution of GRBSDE \eqref{RBSDE-jumps} with data $(\xi,f,R)$ and
$(\xi',f',R')$.
Define
\begingroup
\abovedisplayskip=4pt
\belowdisplayskip=4pt
\[
(\bar{Y},\bar{Z},\bar{V},\bar{M},\bar{\xi},\bar{f},\bar {R})=
\bigl(Y-Y',Z-Z',V-V',M-M',
\xi-\xi',f-f',R-R'\bigr).
\]
Then there exists a constant $C>0$, depending on $p$ and $T$, such
that, for every $a\ge\mu+2L^2$,
\begin{align}
\label{ineq2.63}
&\E \Biggl[\Sup_{t\in[0,T]}e^{apt}|\bar Y_t|^p+ \Biggl(\int_0^Te^{2at}|\bar Z_t|^2dt \Biggr)^\frac{p} {2}\nn\\[-2pt]
&\qquad + \Biggl(\int_{0}^{T}\int_Ue^{2at}|\bar V_{t}|^2\lambda(de)dt \Biggr)^\frac{p} {2}+e^{apT}[\bar M]_T^\frac{p} {2}
\Biggr]\nn
\\[-2pt]
&\quad {\le}\, C\E \Biggl[e^{apT}|\bar\xi|^p+ \Biggl(\int
_0^Te^{at}\big|\bar f\bigl(s,Y'_t,Z'_t,V'_t
\bigr)\big| dt \Biggr)^p+ \Biggl(\int_0^Te^{at}d|
\bar R|_t \Biggr)^p \Biggr].
\end{align}
\endgroup
\end{lemma}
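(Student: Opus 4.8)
The plan is to recognise \eqref{ineq2.63} as the instance of Lemma~\ref{Lp-estimates} obtained by applying that a priori estimate to the difference process. Subtracting the two copies of \eqref{RBSDE-jumps}, the quadruple $(\bar Y,\bar Z,\bar V,\bar M)$ solves the GBSDE with terminal value $\bar\xi$, finite-variation term $\bar R$, and generator
\[
g(s,\bar y,\bar z,\bar v):=f\bigl(s,\bar y+Y'_s,\bar z+Z'_s,\bar v+V'_s\bigr)-f'\bigl(s,Y'_s,Z'_s,V'_s\bigr),
\]
since then $g(s,\bar Y_s,\bar Z_s,\bar V_s)=f(s,Y_s,Z_s,V_s)-f'(s,Y'_s,Z'_s,V'_s)$. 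Both $(Y,Z,V,M)$ and $(Y',Z',V',M')$ lie in $\varXi^p$, hence so does $(\bar Y,\bar Z,\bar V,\bar M)$; in particular $\bar Y\in\cS^p$, which is the only integrability of the difference process that Lemma~\ref{Lp-estimates} takes as an input.

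Next I would check that $g$ fulfills assumption $(A)$ with the nonnegative, progressively measurable process $f_t:=\bigl|\bar f(t,Y'_t,Z'_t,V'_t)\bigr|$. Inserting the telescoping terms $f(s,Y'_s,\bar z+Z'_s,\bar v+V'_s)$, $f(s,Y'_s,Z'_s,\bar v+V'_s)$ and $f(s,Y'_s,Z'_s,V'_s)$ one writes
\begin{align*}
\operatorname{\widehat{sgn}}(\bar y)\,g(s,\bar y,\bar z,\bar v)
&=\operatorname{\widehat{sgn}}(\bar y)\bigl[f(s,\bar y+Y'_s,\bar z+Z'_s,\bar v+V'_s)-f(s,Y'_s,\bar z+Z'_s,\bar v+V'_s)\bigr]\\
&\quad+\operatorname{\widehat{sgn}}(\bar y)\bigl[f(s,Y'_s,\bar z+Z'_s,\bar v+V'_s)-f(s,Y'_s,Z'_s,\bar v+V'_s)\bigr]\\
&\quad+\operatorname{\widehat{sgn}}(\bar y)\bigl[f(s,Y'_s,Z'_s,\bar v+V'_s)-f(s,Y'_s,Z'_s,V'_s)\bigr]\\
&\quad+\operatorname{\widehat{sgn}}(\bar y)\bigl[f(s,Y'_s,Z'_s,V'_s)-f'(s,Y'_s,Z'_s,V'_s)\bigr],
\end{align*}
and bounds the four brackets by means of $(H3)$, $(H5)$, $(H6)$ and the trivial estimate $\bigl|\bar f(s,Y'_s,Z'_s,V'_s)\bigr|$ respectively; this gives
\[
\operatorname{\widehat{sgn}}(\bar y)\,g(s,\bar y,\bar z,\bar v)\le f_s+\mu|\bar y|+L|\bar z|+L\|\bar v\|_{\cL^2_\lambda},
\]
that is, $(A)$ with the constants $\mu$ and $L$ coming from $(H3)$, $(H5)$, $(H6)$ for $f$.

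It remains to secure the integrability hypothesis of Lemma~\ref{Lp-estimates}. If the right-hand side of \eqref{ineq2.63} is infinite the inequality is vacuous, so I may assume it is finite; as $e^{at}$ is bounded away from $0$ and $\infty$ on the compact interval $[0,T]$, this yields $\E\bigl(\int_0^T f_t\,dt\bigr)^p<\infty$ (hence also $\E\bigl(\int_0^T f_t\,dt\bigr)^2<\infty$, since $p\ge2$), together with $\E|\bar\xi|^p<\infty$ and $\E|\bar R|^p_T<\infty$. Thus all hypotheses of Lemma~\ref{Lp-estimates} are met for $(\bar Y,\bar Z,\bar V,\bar M)$ with generator $g$ and data $(\bar\xi,\bar R)$, and its conclusion \eqref{ineq2.62} is exactly \eqref{ineq2.63}. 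I expect the only point requiring care to be the bookkeeping of the telescoping step---in particular keeping track of which argument $\operatorname{\widehat{sgn}}(\bar y)$ multiplies---while no estimate beyond those already proved for Lemma~\ref{Lemma9} and Lemma~\ref{Lp-estimates} is needed.
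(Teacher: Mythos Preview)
Your proposal is correct and follows essentially the same route as the paper: write the difference $(\bar Y,\bar Z,\bar V,\bar M)$ as the solution of a GBSDE with data $(\bar\xi,\bar R)$, verify assumption $(A)$ for its generator via the telescoping decomposition built from $(H3)$, $(H5)$, $(H6)$ with $f_t=|\bar f(t,Y'_t,Z'_t,V'_t)|$, and then invoke Lemma~\ref{Lp-estimates}. Your explicit definition of $g(s,\bar y,\bar z,\bar v)$ and the check of the integrability hypothesis are in fact more carefully stated than in the paper, but the argument is the same.
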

\begin{proof}
By an already used change-of-variable argument we may assume that $a=0$.
Obviously, $(\bar{Y},\bar{Z},\bar{V},\bar{M})$ solves the following
GBSDE in $\varXi^p$:
\begingroup
\abovedisplayskip=4pt
\belowdisplayskip=4pt
\begin{align}
\label{Lp-p2-ito-formula-variation} \bar{Y}_t&=\bar\xi+\int^T_t
\bigl(f(s,Y_s,Z_s,V_s)-f'
\bigl(s,Y'_s,Z'_s,V'_s
\bigr) \bigr) ds+\int_t^Td\bar R_s
-\int^T_t\bar Z_s
 dW_s\nn
\\[-1pt]
& \quad -\int_{t}^{T}\int
_U\bar V_{s}(e)\widehat{\pi }(de,ds)-\int
_t^Td\bar M_s,\hspace{3mm} t
\in[0,T].
\end{align}
It follows from $(H3)$, $(H5)$, and $(H6)$ that
\begin{align}
&\operatorname{\widehat{sgn}}(\bar{y}) \bigl(f(t,y,z,v)-f'\bigl(t,y',z',v'
\bigr)\bigr)\nn
\\[-1pt]
&\quad =\operatorname{\widehat{sgn}}(\bar{y}) \bigl(f(t,y,z,v)-f\bigl(t,y',z',v'
\bigr) \bigr)+\operatorname{\widehat{sgn}}(\bar{y})\bar{f}\bigl(t,y',z',v'
\bigr)\nn
\\[-1pt]
&\quad =\operatorname{\widehat{sgn}}(\bar{y}) \bigl[f(t,y,z,v)-f\bigl(t,y',z,v\bigr)+f
\bigl(t,y',z,v\bigr)-f\bigl(t,y',z',v
\bigr)\nn
\\[-1pt]
&\qquad +f\bigl(t,y',z',v\bigr)-f
\bigl(t,y',z',v'\bigr) \bigr]+
\operatorname{\widehat{sgn}}(\bar {y})\bar{f}\bigl(t,y',z',v'
\bigr)\nn
\\[-1pt]
&\quad \le\big|\bar{f}\bigl(t,y',z',v'\bigr)\big|+
\mu|y|+L|z|+L\|v\|_{\cL^2_\lambda},
\end{align}
\endgroup
which means that assumption $(A)$ is satisfied for the generator of
GBSDE \eqref{Lp-p2-ito-formula-variation}, with $f_t\equiv|\bar
{f}(t,y',z',v')|$. Thus, by Lemma~\ref{Lp-estimates} the desired
estimate follows, which ends the proof of Lemma~\ref{Lp-estimates-variation}.
\end{proof}

Now we are able to give the main result of this subsection, the
existence and uniqueness of an $L^p$-solution of GBSDE \eqref
{RBSDE-jumps} in the case $p\ge2$.
\begin{thm}\label{existence-uniqueness-in-Lp-multi-case-1<p<2}
Let $p\ge2$ and assume that $(H1)$--$(H6)$ hold. Then, there exists a
unique $\L^p$-solution $(Y,Z,V,M)$
for the GBSDE \eqref{RBSDE-jumps}.
\end{thm}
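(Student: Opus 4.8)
The plan is to follow the standard two-part strategy for existence and uniqueness of $\L^p$-solutions, using the a priori estimates of Lemma~\ref{Lp-estimates} and Lemma~\ref{Lp-estimates-variation} together with the already-established $\L^2$ theory (Theorem~\ref{existence-uniqueness-in-L2-multi-case}).

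\emph{Uniqueness.} This is immediate: if $(Y,Z,V,M)$ and $(Y',Z',V',M')$ are two $\L^p$-solutions, then since $p\ge2$ they are in particular $\L^2$-solutions, and the uniqueness part of Theorem~\ref{existence-uniqueness-in-L2-multi-case} forces them to coincide. Alternatively, one applies Lemma~\ref{Lp-estimates-variation} to the difference with $(\bar\xi,\bar f,\bar R)=(0,0,0)$, which gives directly that the difference vanishes.

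\emph{Existence.} I would proceed by truncating the data so as to reduce to the $\L^2$ framework, then pass to the limit using the $\L^p$ a priori estimate. Concretely: for each $n\in\N^*$ set $\xi_n=T_n(\xi)$, $f_n(t,y,z,v)=f(t,y,z,v)-f(t,0,0,0)+T_n(f(t,0,0,0))$, and $R^n_t=\int_0^t\1_{\{|R|_s\le n\}}dR_s$. Each triple $(\xi_n,f_n,R^n)$ satisfies $(H1)_{p=2}$--$(H6)$ (the data are now bounded in the sense needed; note $f_n(t,0,0,0)=T_n(f(t,0,0,0))$ is bounded and $f_n$ inherits $(H2)$--$(H6)$ from $f$ with the same constants), so Theorem~\ref{existence-uniqueness-in-L2-multi-case} yields a unique $\L^2$-solution $(Y^n,Z^n,V^n,M^n)\in\varXi^2$ of GBSDE$(\xi_n,f_n+dR^n)$. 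Next, since the generator of this equation satisfies assumption $(A)$ with $f_t=|f_n(t,0,0,0)|\le|f(t,0,0,0)|$ and $f\in\L^1$ in the sense of $(H1)$ for exponent $p$, Lemma~\ref{Lp-estimates} applies and shows that $(Y^n,Z^n,V^n,M^n)$ is in fact uniformly bounded in $\varXi^p$, with a bound controlled by $\E[|\xi|^p+(\int_0^T|f(t,0,0,0)|dt)^p+|R|^p_T]<+\infty$.

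To upgrade boundedness to convergence, I would apply Lemma~\ref{Lp-estimates-variation} to the pair of data $(\xi_n,f_n,R^n)$ and $(\xi_m,f_m,R^m)$. The variation term on the right-hand side is
\[
\E\Biggl[|\xi_n-\xi_m|^p+\Biggl(\int_0^T\big|T_n(f(t,0,0,0))-T_m(f(t,0,0,0))\big|dt\Biggr)^p+\Biggl(\int_0^Td\big|R^n-R^m\big|_s\Biggr)^p\Biggr],
\]
which tends to $0$ as $n,m\to\infty$ by dominated convergence (using $(H1)$ to dominate). Hence $(Y^n,Z^n,V^n,M^n)$ is Cauchy in $\varXi^p$; let $(Y,Z,V,M)$ be its limit. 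Passing to the limit in GBSDE$(\xi_n,f_n+dR^n)$ — the stochastic integrals converge in $\L^p$ by the isometry/BDG bounds, $f_n(s,Y^n_s,Z^n_s,V^n_s)\to f(s,Y_s,Z_s,V_s)$ in the appropriate sense using continuity $(H2)$, the Lipschitz properties $(H5)$--$(H6)$, and the growth control $(H4)$ (exactly as in the $\L^2$ proof, passing to a subsequence converging $dt\otimes d\P$-a.e.\ if necessary) — shows that $(Y,Z,V,M)\in\varXi^p$ solves GBSDE$(\xi,f+dR)$, i.e., it is the desired $\L^p$-solution.

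\emph{Main obstacle.} The delicate point is justifying the convergence of the generator terms $\int_t^T f_n(s,Y^n_s,Z^n_s,V^n_s)\,ds \to \int_t^T f(s,Y_s,Z_s,V_s)\,ds$ in $\L^p$. The Lipschitz parts in $z$ and $v$ are handled directly by $(H5)$--$(H6)$ and the $\varXi^p$ convergence of $Z^n,V^n$, but the $y$-dependence is only continuous with the weak growth condition $(H4)$, so one must argue along a subsequence for which $Y^n\to Y$ pointwise $dt\otimes d\P$-a.e., then combine the local uniform bound from $(H4)$ with the uniform $\L^p$-bound on $\sup_t|Y^n_t|$ to get a uniform integrability / dominated convergence argument (truncating at level $r$ as in Step~3 of the proof of Theorem~\ref{existence-uniqueness-in-L2-multi-case}). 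This is essentially a repetition of the truncation machinery already developed in the $\L^2$ case, now carried out in the $\L^p$ norm; no genuinely new idea is required beyond having the sharp a priori estimate of Lemma~\ref{Lp-estimates} in hand.
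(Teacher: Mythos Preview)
Your proposal is correct and follows essentially the same route as the paper: the same truncation $(\xi_n,f_n,R^n)$, the same appeal to Theorem~\ref{existence-uniqueness-in-L2-multi-case} for an $\L^2$-solution, the same use of Lemma~\ref{Lp-estimates} to upgrade to $\varXi^p$, and the same Cauchy argument via Lemma~\ref{Lp-estimates-variation}. If anything you are more explicit than the paper, which simply asserts that the limit solves the GBSDE without spelling out the generator-convergence step you flag as the ``main obstacle''; your outline of that step (a.e.\ subsequence plus $(H4)$-truncation and uniform $\cS^p$ bounds) is the natural way to fill that gap.
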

\begin{proof}
Uniqueness follows immediately from Lemma~\ref
{Lp-estimates-variation}. Now we deal with the existence. Set
$T_n(x)=\frac{xn}{|x|\vee n}$ for $n\in\N^*$ and define $\xi_n$,
$f_n$, and $R^n$ as follows:
\begingroup
\abovedisplayskip=4pt
\belowdisplayskip=4pt
\begin{align*}
\xi_n&=T_n(\xi),\qquad f_n(t,y,z,v)=f(t,y,z,v)-f(t,0,0,0)+T_n\bigl(f(t,0,0,0)\bigr),\nn\\[-2pt]
R^n_t&=\int_0^t\1_{\{|R|_s\le n\}} dR_s.
\end{align*}
Let $(Y^n,Z^n,V^n,M^n)$ be a solution of GBSDE \eqref{RBSDE-jumps}
associated with $(\xi_n,f_n+dR_n)$. Hence, by Theorem~\ref
{existence-uniqueness-in-L2-multi-case}, for every $n\in\N$, there
exists a unique solution to $(Y^n,Z^n,V^n,\break M^n)\in\varXi^2$ of GBSDE
\eqref{RBSDE-jumps} associated with $(\xi^n,f_n+dR^n)$, but in fact
also in $\varXi^p$, $p\ge2$, according to Lemma~\ref{Lp-estimates}.
Our goal now is to show that $(Y^n,Z^n,V^n,M^n)$ is a Cauchy sequence
in $\varXi^p$. For $m\ge n$, applying Lemma~\ref{Lp-estimates-variation}
yields
\begin{align*}
&\E \Biggl[\Sup_{t\in[0,T]}\big|Y^{m}_t-Y^{n}_t\big|^2+\Biggl(\int_0^T\big|Z^{m}_t-Z^{n}_t\big|^2 dt \Biggr)^\frac{p} {2}\\[-1pt]
&\qquad + \Biggl(\int_{0}^{T}\int_U\big|V^{m}_t(e)-V^{n}_t(e)\big|^2\lambda(de) dt \Biggr)^\frac{p} {2}+ \bigl[M^m-M^n\bigr]_T^\frac{p} {2} \Biggr]\\[-2pt]
&\quad \le C\E \Biggl[|\xi_m-\xi_n|^p+ \Biggl(\int_0^T\big|T_m\bigl(f(t,0,0,0)\bigr)-T_n\bigl(f(t,0,0,0)\bigr)\big| dt \Biggr)^p\\[-2pt]
&\qquad + \Biggl(\int_0^Td\big|R^m-R^n\big|_s\Biggr)^p \Biggr].
\end{align*}
Therefore, letting $n$ and $m$ to infinity, we conclude that
$(Y^n,Z^n,V^n,M^n)$ is a Cauchy sequence in $\varXi^p$ and its limit
$(Y,Z,V,M)\in\varXi^p$ is a solution of GBSDE \eqref{RBSDE-jumps}
associated with $(\xi,f+dR)$, which ends the proof.
\end{proof}
%
\section{Comparison theorem}\label{section-CT}
In this section, we assume that $k=1$ and aim at showing a comparison
theorem for GBSDE. Our result, in particular, extends to the case of
generalized BSDEs in a general filtration the comparison theorem given in
\cite[Prop.~4]{KP14}.
We follow the argument of \cite{QS13}. In particular, we consider the
Dol\'eans--Dade exponential local martingale. Let $\alpha$, $\beta$ be
predictable processes integrable w.r.t.\ $dt$ and $dW_t$, respectively.
Let $\gamma$ be a predictable process defined on $[0, T]\times\varOmega
\times\R$ integrable w.r.t.\ $\widehat\pi(de,ds)$. For any $0\leq t
\leq s \leq T$, let $E$ be the solution of
\[
dE_{t,s} = E_{t,s-} \biggl[ \beta_s
dW_s + \int_U \gamma _s(e)
\widehat{\pi}(de,ds) \biggr],\quad E_{t,t}=1,
\]

and let $\varGamma$ be the solution of\endgroup
%
\begin{equation}
\label{eq:doleans_dade_exp} d\varGamma_{t,s} = \varGamma_{t,s-} \biggl[
\alpha_s ds + \beta_s dW_s + \int
_U \gamma_s(e)\widehat{\pi}(de,ds) \biggr],
\quad\varGamma_{t,t}=1.
\end{equation}
Of course, $\varGamma_{t,s} = \exp ( \int_t^s \alpha_r dr
) E_{t,s}$, and
\[
E_{t,s} = \exp \Biggl( \int_t^s
\beta_r dW_r- \frac{1}{2} \int
_t^s \beta^2_r dr
\Biggr) \prod_{t<r\leq s} \bigl(1+ \gamma_r(
\Delta X_r)\bigr) \xch{e^{-\gamma_r(\Delta X_r)},}{e^{-\gamma_r(\Delta X_r)}}
\]
with $X_t = \int_0^t \int_U u \pi(du,ds)$.

Note that, classically, if $\gamma_t(e)\ge-1,\ d\P\otimes ds\otimes
d\lambda(e)$-a.s., then $\varGamma_{t,.}\ge0$ a.s. (see \cite
[Prop.~3.1]{QS13}).

We make the following monotonicity assumption on $f$ w.r.t. $v$
{\setlength\leftmargini{30pt}
\begin{itemize}
\item[$(H6')$] For each $(y,z,v,v') \in\R\times\R^d \times(\cL
^2_\lambda)^2$, there exists a predictable process $\kappa= \kappa
^{y,z,v,v'} : \varOmega\times[0,T] \times U \to\R$ satisfying:
\begin{itemize}
\item[$\bullet$] $-1 \leq\kappa^{y,z,v,v'}_t(e)$;
\item[$\bullet$] $|\kappa^{y,z,v,v'}_t(e)|\le\vartheta(e)$, where
$\vartheta\in\cL^2_\lambda$ is such that
\end{itemize}
\[
f(t,y,z,v) - f\bigl(t,y,z,v'\bigr) \le\int_U
\bigl(v(e)-v'(e)\bigr) \kappa ^{y,z,v,v'}_t(e)
\xch{\lambda(de),}{\lambda(de)}
\]
$\P\otimes \mathit{Leb} \otimes\lambda$-a.e.
\end{itemize}}%
Notice that $(H6')$ implies $(H6)$ (see Section~5 in \cite{KP14}).

We begin by showing that a linear GBSDE with jumps can be written as a
conditional expectation via an exponential semimartingale. This result
will be used to prove the comparison theorem.
\begin{lemma}\label{Linear-GBSDE-Comp}
Assume that $|\beta|$ is bounded and $\alpha$ is bounded from above.
Suppose also that, $d\P\otimes dt\otimes\lambda(de)$-a.s.,
%
\begin{equation}
\label{properties of gamma-1} -1 \leq\xch{\gamma_t(e),}{\gamma_t(e)}
\end{equation}
and
%
\begin{equation}
\label{properties of gamma-2} \big|\gamma_t(e)\big|\le\vartheta(e),\ \ \text{where}\
\vartheta\in\cL ^2_\lambda.
\end{equation}
Let $(f_t)_{0\le t\le T}$ be a real-valued progressively measurable
process, and let $(Y,Z,V,\break M)$ be the solution of the linear GBSDE
\begin{align}
\label{eq:linear_BSDE}
Y_t &= \xi+ \int_t^T\biggl[ f_s + \alpha_s Y_s +\beta_s Z_s + \int_U\gamma_s(u) V_s(e) \lambda(de) \biggr] ds+\int_t^TdR_s\nn\\
&\quad - \int_t^T\int_U V_s(e) \widehat{\pi}(de,ds) -\int_t^T Z_sdW_s- \int_t^T dM_s.
\end{align}
Then, $\E\Sup_{s\in[t,T]}|\varGamma_{t,s}|^p<+\infty$, and if
%
\begin{equation}
\label{integrability-xi-f-R-comp-thm} \E \Biggl[|\xi|^p+ \Biggl(\int_0^T|f_s|
 ds \Biggr)^p+|R|_T^p \Biggr]<+\infty,
\end{equation}
then the solution $(Y,Z,V,M)$ belongs to $\varXi^p$.\vadjust{\eject}

Furthermore, the process $(Y_t)$ satisfies
%
\begin{equation}
\label{Y-cond-exp-linear-gbsde} Y_t=\E \Biggl[\xi\varGamma_{t,T}+\int
_t^T\varGamma_{t,s} f_s
ds+\int_t^T\varGamma_{t,s-}dR_s\Bigm|
\cF_t \Biggr],\quad 0\le t\le T, \ \text{a.s.}
\end{equation}
\end{lemma}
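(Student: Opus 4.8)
The plan is to obtain all three assertions from the representation \eqref{Y-cond-exp-linear-gbsde}, which in turn follows from It\^o's product formula applied to $\Gamma_{t,s}Y_s$; an $\L^q$ moment bound on $\Gamma$ is the preliminary ingredient, and it is where the boundedness of $\beta$ and of $\alpha$ from above, and the condition $\vartheta\in\cL^2_\lambda$, enter. Since $\alpha$ is bounded from above, $\exp(\int_u^v\alpha_r\,dr)$ is bounded uniformly over $0\le u\le v\le T$, so it suffices to control $E$. Set $N_s:=\exp(\int_0^s\beta_r\,dW_r-\frac12\int_0^s\beta_r^2\,dr)$. The elementary inequality $(1+x)e^{-x}\le1$, valid for $x\ge-1$, together with $\gamma_s(e)\ge-1$, forces the jump product $\prod_{u<r\le v}(1+\gamma_r(\Delta X_r))e^{-\gamma_r(\Delta X_r)}$ to lie in $[0,1]$, whence
\[
0\le E_{u,v}\le\exp\biggl(\int_u^v\beta_r\,dW_r-\frac12\int_u^v\beta_r^2\,dr\biggr)=\frac{N_v}{N_u},\qquad 0\le u\le v\le T.
\]
Since $|\beta|$ is bounded, both $N$ and $1/N$ are, up to deterministic bounded factors, exponential martingales of $\pm\beta$ and hence have moments of every order, so Doob's maximal inequality gives $\E\bigl(\Sup_{0\le u\le v\le T}|\Gamma_{u,v}|\bigr)^q<+\infty$ for every $q\ge1$; in particular the first assertion holds. (The hypothesis $\vartheta\in\cL^2_\lambda$ is used here only to make $E$ well defined, through $\E\sum_{r\le T}\gamma_r(\Delta X_r)^2\le T\|\vartheta\|_{\cL^2_\lambda}^2<+\infty$.)

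Next I would derive the representation. It suffices to work with $(Y,Z,V,M)$ in the natural solution space $\varXi^2$; in particular $Y\in\cS^2$. Apply It\^o's product formula (see \cite{Protter90}) to $s\mapsto\Gamma_{t,s}Y_s$ on $[t,T]$, using \eqref{eq:doleans_dade_exp} for $d\Gamma_{t,s}$ and \eqref{eq:linear_BSDE} for $dY_s$. The key point is that the predictable finite-variation part of $d(\Gamma_{t,s}Y_s)$ collapses to $-\Gamma_{t,s-}f_s\,ds-\Gamma_{t,s-}\,dR_s$: the two $\alpha_sY_s$ contributions cancel; the generator term $-\Gamma_{t,s-}\beta_sZ_s\,ds$ cancels the bracket $d[(\Gamma_{t,\cdot})^c,Y^c]_s=\Gamma_{t,s-}\beta_sZ_s\,ds$ of the continuous martingale parts; and the generator term $-\Gamma_{t,s-}\int_U\gamma_s(e)V_s(e)\lambda(de)\,ds$ cancels the compensator of the Poisson cross-term $\sum_{s}\Delta\Gamma_{t,s}\Delta Y_s$, whose predictable projection equals $\Gamma_{t,s-}\int_U\gamma_s(e)V_s(e)\lambda(de)\,ds$, while the remaining discontinuous cross-term between $\Gamma_{t,\cdot}$ and $M$ vanishes because $M$ is orthogonal to $\widehat\pi$. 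Consequently
\[
\Gamma_{t,s}Y_s+\int_t^s\Gamma_{t,u-}f_u\,du+\int_t^s\Gamma_{t,u-}\,dR_u,\qquad s\in[t,T],
\]
is a local martingale. Since $Y\in\cS^2$, the moment bound of the first step, the Cauchy--Schwarz inequality and \eqref{integrability-xi-f-R-comp-thm} show that $\Sup_{s\in[t,T]}|\Gamma_{t,s}Y_s|+\int_t^T|\Gamma_{t,u-}|(|f_u|\,du+d|R|_u)$ is integrable, so after localising, this process is a uniformly integrable martingale. Taking conditional expectation given $\cF_t$ and reading off the values at $s=t$ (where $\Gamma_{t,t}=1$) and $s=T$ (where $Y_T=\xi$), and using $\Gamma_{t,u-}=\Gamma_{t,u}$ for Lebesgue-a.e.\ $u$, yields \eqref{Y-cond-exp-linear-gbsde}. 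This It\^o expansion---keeping track of all the jump contributions and of the finite-variation term $dR$, and correctly separating the predictable finite-variation pieces from the local martingale ones---is the step I expect to demand the most care.

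Finally, for the membership in $\varXi^p$: from \eqref{Y-cond-exp-linear-gbsde}, $|Y_t|\le\E[D\mid\cF_t]$ with $D:=\bigl(\Sup_{0\le u\le v\le T}|\Gamma_{u,v}|\bigr)\bigl(|\xi|+\int_0^T|f_s|\,ds+|R|_T\bigr)$, so Doob's $\L^p$ maximal inequality gives $\E\Sup_{t\in[0,T]}|Y_t|^p\le\bigl(p/(p-1)\bigr)^p\E[D^p]$, and $\E[D^p]<+\infty$ by H\"older's inequality together with the first-step moment bound on $\Gamma$ (which holds for every exponent) and \eqref{integrability-xi-f-R-comp-thm}; hence $Y\in\cS^p$. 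Since the affine generator of \eqref{eq:linear_BSDE} satisfies assumption $(A)$---with the process there equal to $|f_t|$, with $\mu$ an upper bound of $\alpha$, and with $L=\max(\|\beta\|_\infty,\|\vartheta\|_{\cL^2_\lambda})$---and $Y\in\cS^p$, Lemma~\ref{Lp-estimates} applies and yields $(Z,V,M)\in\cM^p\times\cL^p\times\M^p$; therefore $(Y,Z,V,M)\in\varXi^p$.
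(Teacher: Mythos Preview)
Your derivation of the representation \eqref{Y-cond-exp-linear-gbsde} via It\^o's product formula is essentially the paper's argument, and your direct bound $0\le E_{u,v}\le N_v/N_u$ coming from $(1+x)e^{-x}\in[0,1]$ for $x\ge-1$ is a clean, more elementary alternative to the paper's citation of \cite[Prop.~A.1]{QS13} for the moments of $\varGamma$. One small imprecision: the discontinuous cross term $[\varGamma_{t,\cdot},M]$ does not literally vanish; rather, since $M$ is orthogonal to $W$ and to $\widehat\pi$ (hence to the martingale part of $\varGamma_{t,\cdot}$), this bracket is a local martingale and so goes into $N$ rather than into the predictable finite-variation part---which is all you need.

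There is, however, a genuine gap in your route to $(Y,Z,V,M)\in\varXi^p$. From the representation you bound $|Y_t|\le\E[D\mid\cF_t]$ with $D=\varGamma^\ast X$, where $\varGamma^\ast=\Sup_{u\le v}|\varGamma_{u,v}|$ has moments of all orders and $X=|\xi|+\int_0^T|f_s|\,ds+|R|_T\in L^p$ by \eqref{integrability-xi-f-R-comp-thm}. But H\"older gives
\[
\E\bigl[D^p\bigr]=\E\bigl[(\varGamma^\ast)^pX^p\bigr]\le\bigl\|(\varGamma^\ast)^p\bigr\|_{L^r}\,\bigl\|X^p\bigr\|_{L^{r'}},\qquad \tfrac1r+\tfrac1{r'}=1,
\]
and the second factor is finite only if $X\in L^{pr'}$ with $r'>1$---strictly more than is assumed. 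So $\E[D^p]<+\infty$ does not follow, and Doob's maximal inequality cannot be invoked to get $Y\in\cS^p$.

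The paper avoids this detour entirely: it simply checks that, because $\alpha$ is bounded from above, $\beta$ is bounded, and $|\gamma|\le\vartheta\in\cL^2_\lambda$, the linear generator satisfies $(H3)$, $(H5)$, $(H6)$ (equivalently assumption $(A)$, exactly as you write in your last sentence), and then invokes Lemma~\ref{Lp-estimates} directly for membership in $\varXi^p$. In the paper's application (Proposition~\ref{comp-thm}) the solutions compared are already given in $\varXi^p$, so the hypothesis $Y\in\cS^p$ of that lemma is available from the outset. In short, your final paragraph already contains the correct argument; use it \emph{in place of}, not after, the H\"older/Doob step.
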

\begin{proof}
We first show that $\E\Sup_{s\in[t,T]}|\varGamma_{t,s}|^p<+\infty$.
Indeed, by \eqref{properties of gamma-1}, as mentioned previously, it
follows that $\varGamma_{t,.}\ge0$. Combining this with \eqref
{properties of gamma-2}, using the fact that $|\beta|$ is bounded and
$\alpha$ is bounded from above, and applying \cite[Prop.~A.1]{QS13}
yield that $\varGamma$ is $p$-integrable, that is, $\E|\varGamma
_{t,T}|^p<+\infty$. Hence, using Doob's inequality, we have that
%
\begin{equation}
\label{Gamma-belongs-Sp} \E\Sup_{s\in[t,T]}|\varGamma_{t,s}|^p\le
C_p \Sup_{s\in[t,T]} \E |\varGamma_{t,s}|^p
\le C_p\E|\varGamma_{t,T}|^p<+\infty,
\end{equation}
as desired.

Next, let us show that $(Y,Z,V,M)$ belongs to $\varXi^p$. Clearly, thanks
to the assumptions made on $\alpha$, $\beta$, and $\gamma$ and the
fact $(H6')$ implies $(H6)$, we can easily see that the generator of
the linear GBSDE \eqref{eq:linear_BSDE} satisfies assumptions $(H3)$,
$(H5)$, and $(H6)$. Thus, in view of this and \eqref
{integrability-xi-f-R-comp-thm}, applying Lemma \eqref{Lp-estimates}
yields the claim.

It remains to show that $(Y_t)$ \xch{satisfies}{satisfies Eq.}~\eqref{Y-cond-exp-linear-gbsde}. Indeed, by the It\^o product formula we obtain
\begin{align*}
d(Y_s\varGamma_{t,s}) & = \varGamma_{t,s-} dY_s + Y_{s-} d\varGamma_{t,s} + d [ \varGamma_{t,.} , Y ]_s                                                       \\
& = \varGamma_{t,s-} \biggl( -f_s - \alpha_s Y_s - \beta_s Z_s - \int _U \gamma_s(u) V_s(u) \lambda(de) \biggr) ds-\varGamma_{t,s-}dR_s \\
& \quad + \varGamma_{t,s-} \int_U V_s(e) \widehat{\pi}(de,ds) + \varGamma_{t,s-} Z_s dW_s+ \varGamma_{t,s-} dM_s                 \\
& \quad + Y_{s-} \varGamma_{t,s-} \biggl( \alpha_s ds + \beta_s dW_s + \int _U \gamma_s(u)\xch{\widehat{\pi}(de,ds)\biggr)}{\widehat{\pi}(de,ds)\biggr) )}          \\
& \quad + \varGamma_{t,s-} \beta_s Z_s ds + \varGamma_{t,s-} \int_U V_s(e) \gamma_s(u) \pi(de,ds)                                 \\
& = -\varGamma_{t,s} f_s ds -\varGamma_{t,s-}dR_s+\xch{dN_s,}{dN_s}
\end{align*}
with
\begin{align*}
dN_s&=\varGamma_{t,s-} \int_U\bigl(V_s(e) + Y_{s-} \gamma_s(e)+V_s(e)\gamma _s(e)\bigr) \widehat{\pi}(de,ds)\\
& \quad + \varGamma_{t,s-} (Z_s+Y_s\beta_s) dW_s+\varGamma_{t,s-}dM_s.
\end{align*}
Integrating between $t$ and $T$ yields
%
\begin{equation}
\label{linear-bsde-2} \xi\varGamma_{t,T}-Y_t=-\int
_t^T\varGamma_{t,s} f_s
ds-\int_t^T\varGamma _{t,s-}dR_s+
\int_t^TdN_s \quad\text{a.s.}
\end{equation}
In view of the boundedness assumptions made on the coefficients $\beta
$ and $\gamma$, combined with estimate \eqref{Gamma-belongs-Sp} and
the fact
that $(Y,Z,V,M)\in\varXi^p$, it follows\vadjust{\eject} that the local martingale $N$ is
a uniformly integrable martingale. Therefore, taking
the conditional expectation w.r.t. $\cF_t$ in \eqref{linear-bsde-2},
we obtain
\begin{equation}
\nn Y_t=\E \Biggl[\xi\varGamma_{t,T}+\int
_t^T\varGamma_{t,s} f_s
ds+\int_t^T\varGamma_{t,s-}dR_s\Bigm|
\cF_t \Biggr],
\end{equation}
as desired. This ends the proof.
\end{proof}
\begin{pro}\label{comp-thm}
We consider two sets of data $(\xi_1,f_1+dR_1)$ and $(\xi
_2,f_2+dR_2)$ such that $\xi_1$, $\xi_2$, $R_1$, and $R_2$ satisfy $(H1)$.
Moreover, we assume that $f_1$ and $f_2$ satisfy, respectively,
${(H1)}$--${(H6)}$ and ${(H1)}$--${(H5)}$, ${(H6')}$. Let
$(Y^1,Z^1,V^1,M^1)$ and $(Y^2,Z^2,V^2,M^2)$ be respectively solutions
of GBSDEs \eqref{RBSDE-jumps} associated with $(\xi_1,f_1+dR_1)$
and $(\xi_2,f_2+dR_2)$ in some space $\varXi^p$ with $p\ge2$. If $\xi
^1 \le\xi^2$, $f_1(t,Y^1_t,Z^1_t,\break V^1_t) \le
f_2(t,Y^1_t,Z^1_t,V^1_t)$, and for a.e. $t$, $dR^1\le dR^2$, then a.s.
$Y^1_t \le Y^2_t$ for any $t \in[0,T]$.
\end{pro}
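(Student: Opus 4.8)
The plan is to linearize the equation for the difference of the two solutions and then read off its sign from the Feynman--Kac-type representation of Lemma~\ref{Linear-GBSDE-Comp}. Put $(\bar Y,\bar Z,\bar V,\bar M):=(Y^1-Y^2,Z^1-Z^2,V^1-V^2,M^1-M^2)$, $\bar\xi:=\xi_1-\xi_2$ and $\bar R:=R^1-R^2$; then $(\bar Y,\bar Z,\bar V,\bar M)$ solves an equation of the form~\eqref{RBSDE-jumps} with terminal value $\bar\xi$, finite-variation part $\bar R$, and driver $g_s:=f_1(s,Y^1_s,Z^1_s,V^1_s)-f_2(s,Y^2_s,Z^2_s,V^2_s)$. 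First I would peel off the gap between the two generators at the point $(Y^1_s,Z^1_s,V^1_s)$: setting $a_s:=f_1(s,Y^1_s,Z^1_s,V^1_s)-f_2(s,Y^1_s,Z^1_s,V^1_s)$, which is $\le0$ by assumption, one has $g_s=a_s+\bigl(f_2(s,Y^1_s,Z^1_s,V^1_s)-f_2(s,Y^2_s,Z^2_s,V^2_s)\bigr)$. Because $k=1$, the $f_2$-increment linearizes coordinate by coordinate: passing from $Y^1$ to $Y^2$ yields a scalar $\alpha_s$ with $\alpha_s\le\mu$ by $(H3)$; passing from $Z^1$ to $Z^2$ yields a row vector $\beta_s$ with $|\beta_s|\le L$ by $(H5)$; and for the $v$-variable, $(H6')$ supplies a predictable kernel $\gamma_s(e):=\kappa^{Y^2_s,Z^2_s,V^1_s,V^2_s}_s(e)$ with $-1\le\gamma_s(e)$ and $|\gamma_s(e)|\le\vartheta(e)\in\cL^2_\lambda$, together with a nonpositive remainder $\eta_s\le0$, such that $f_2(s,Y^2_s,Z^2_s,V^1_s)-f_2(s,Y^2_s,Z^2_s,V^2_s)=\int_U\gamma_s(e)\bar V_s(e)\,\lambda(de)+\eta_s$. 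Collecting terms, $(\bar Y,\bar Z,\bar V,\bar M)$ solves the linear GBSDE~\eqref{eq:linear_BSDE} with coefficients $(\alpha,\beta,\gamma)$ as above and free term $\hat f_s:=a_s+\eta_s\le0$.

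To conclude, I would follow the proof of Lemma~\ref{Linear-GBSDE-Comp}. Since $\gamma_s(e)\ge-1$, the Dol\'eans--Dade exponential $\varGamma_{t,\cdot}$ of~\eqref{eq:doleans_dade_exp} built from $(\alpha,\beta,\gamma)$ is nonnegative, and since $|\beta|$ is bounded and $\alpha$ is bounded from above, $\E\sup_{s\in[t,T]}|\varGamma_{t,s}|^p<+\infty$ by that lemma. Applying the It\^o product rule to $\bar Y_s\varGamma_{t,s}$, integrating over $[t,T]$ and taking the conditional expectation given $\cF_t$ (the arising local martingale is uniformly integrable because $(\bar Y,\bar Z,\bar V,\bar M)\in\varXi^p$, $\varGamma_{t,\cdot}\in\cS^p$ and $\beta,\gamma$ are bounded/dominated, while the remaining $ds$-integral is a.s.\ finite and, being nonpositive since $\varGamma_{t,s}\ge0$ and $\hat f_s\le0$, has a well-defined conditional expectation), one obtains the representation~\eqref{Y-cond-exp-linear-gbsde}:
\[
\bar Y_t=\E\Biggl[\bar\xi\,\varGamma_{t,T}+\int_t^T\varGamma_{t,s}\hat f_s\,ds+\int_t^T\varGamma_{t,s-}\,d\bar R_s\Bigm|\cF_t\Biggr],\qquad t\in[0,T].
\]
Each summand on the right is $\le0$: $\bar\xi=\xi_1-\xi_2\le0$ and $\varGamma_{t,T}\ge0$; $\hat f_s\le0$ and $\varGamma_{t,s}\ge0$; and $d\bar R_s=dR^1_s-dR^2_s\le0$ with $\varGamma_{t,s-}\ge0$. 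Therefore $\bar Y_t\le0$, i.e.\ $Y^1_t\le Y^2_t$, for each fixed $t$, and hence for all $t\in[0,T]$ simultaneously, a.s., by right-continuity of $Y^1$ and $Y^2$.

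The genuinely jump-specific ingredient, and the heart of the proof, is the $v$-linearization: extracting from $(H6')$ a \emph{predictable} kernel $\gamma$ with $\gamma_s(e)\ge-1$ is exactly what forces $\varGamma_{t,\cdot}\ge0$, hence what makes the comparison work at all (without such a structural restriction on the jump dependence of $f$, comparison may fail; recall the counterexample in \cite{BBP97}). Two points require care here: a measurable-selection argument is needed to check that $(\omega,s,e)\mapsto\kappa^{Y^2_s,Z^2_s,V^1_s,V^2_s}_s(e)$ remains predictable, with the bounds of $(H6')$, after the random arguments are substituted; and one must track the sign of the remainder $\eta_s$ so that it enters the free term $\hat f_s$ (which must stay $\le0$) rather than the linear coefficients. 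Everything else is routine: boundedness of $\beta$ and boundedness from above of $\alpha$ follow directly from $(H5)$ and $(H3)$; nonnegativity and $\L^p$-integrability of $\varGamma$ from Lemma~\ref{Linear-GBSDE-Comp}; and the integrability needed to justify the It\^o-product-and-conditioning step is already built into the standing hypothesis $(\bar Y,\bar Z,\bar V,\bar M)\in\varXi^p$ together with the sign $\hat f_s\le0$.
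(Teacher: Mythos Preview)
Your proof is correct and follows essentially the same route as the paper: linearize the difference of the two solutions using $(H3)$, $(H5)$, and $(H6')$, build the Dol\'eans--Dade exponential $\varGamma$ from the resulting coefficients, apply the It\^o product formula to $\bar Y_s\varGamma_{t,s}$, and conclude from the sign of each term in the representation. The only cosmetic differences are that the paper works with $Y^2-Y^1$ rather than $Y^1-Y^2$ and keeps the $(H6')$ inequality as an inequality throughout (obtaining $\overline Y_t\ge\E[\cdots\mid\cF_t]$) instead of absorbing the slack into a nonpositive remainder $\eta_s$ in the free term as you do.
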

\begin{proof}
Put
\[
\overline Y = Y^2-Y^1, \quad\overline Z =
Z^2-Z^1, \quad\overline V = V^2 -
V^1, \quad\overline M = M^2 - M^1, \quad
\overline R = R^2 - R^1.
\]
Then $(\overline Y,\overline Z,\overline V,\overline M)$ satisfies
\begin{equation*}
\overline Y_t = \overline\xi+ \int_t^T
h_s ds +\int_t^Td\overline
R_s- \int_t^T\int
_U \overline\psi_s(u) \widehat{\pi}(de,ds) -
\int_t^T \overline Z_s
dW_s- \int_t^T d\overline
M_s,
\end{equation*}
where
\[
h_s = f_2\bigl(Y^2_s,Z^2_s,
\psi^2_s\bigr) -f_1\bigl(Y^1_s,Z^1_s,
\psi^1_s\bigr).
\]
Now we define
\begin{align*}
f_s      & = f_2\bigl(Y^1_s,Z^1_s, \psi^1_s\bigr) -f_1\bigl(Y^1_s,Z^1_s, \psi^1_s\bigr),                             \\
\alpha_s & = \frac{f_2(Y^2_s,Z^1_s,\psi^1_s) - f_2(Y^1_s,Z^1_s,\psi ^1_s)}{\overline Y_s} \1_{\overline Y_s \neq0} , \\
\beta_s  & = \frac{f_2(Y^2_s,Z^2_s,\psi^1_s) - f_2(Y^2_s,Z^1_s,\psi ^1_s)}{\overline Z_s} \1_{\overline Z_s \neq0}.
\end{align*}
Then
\begin{align}
\label{generator-comp-linear-GBSDE}
h_s & = f_s + \alpha_s\overline Y_s + \beta_s \overline Z_s+f_2\bigl(Y^2_s,Z^2_s,V^2_s\bigr) - f_2\bigl(Y^2_s,Z^2_s,V^1_s\bigr) \nn\\
& \geq f_s + \alpha_s \overline Y_s + \beta_s \overline Z_s +\int_U\kappa_s^{Y^2_s,Z^2_s,V^1_s,V^2_s} \overline V_s(u) \xch{\lambda(de),}{\lambda(de)}
\end{align}
since $f_2$ satisfies $(H6')$. Moreover, since $f_2$ is Lipschitz
continuous w.r.t. $z$, $|\beta|$ is bounded
by $L$, whereas, by Assumption $(H3)$, $\alpha$ is bounded from above.
Moreover, the process $ \kappa_s^{Y^2_s,Z^2_s,V^1_s,V^2_s}$ is
controlled by $\vartheta\in\cL^2_\lambda$. Note that, since $-1
\leq\kappa^{y,z,\psi,\phi}_t(e)$, it follows that $\varGamma_{t,.}
\geq0$ a.s. Furthermore, in view of the above, we have from Lemma~\ref
{Linear-GBSDE-Comp} that $\varGamma_{t,.}\in\cS^p$.

Now applying It\^o's formula
to $\bar Y_s \varGamma_{t,s}$ for $s\in[t,T]$ and then using inequality
\eqref{generator-comp-linear-GBSDE} together with the non negativity
of $\varGamma$, we can derive, by doing the same computations
as in the proof of Lemma~\ref{Linear-GBSDE-Comp}, that
%
\begin{equation}
\label{d-y-Gamma-2} -d(\bar Y_s \varGamma_{t,s})\ge
\varGamma_{t,s} f_s ds +\varGamma_{t,s-}d\bar
R_s-dN_s,
\end{equation}
where $N$ is a local martingale. Next, applying Lemma~\ref
{Lp-estimates-variation} yields that $(\bar Y,\bar Z,\bar V,\bar M)$
belongs to $\varXi^p$. Since $\varGamma_{t,.}\in\cS^p$, this, combined
with the boundedness of $\beta$ and $\kappa^{y,z,\psi,\phi}_t(e)$,
implies that $N$ is in fact a martingale.

Therefore, integrating between $t$ and $T$ in \eqref{d-y-Gamma-2} and
then taking
the conditional expectation w.r.t. $\cF_t$, we deduce
\[
\overline Y_t \geq\E \Biggl[ \varGamma_{t,T} \overline\xi+
\int_t^T \varGamma_{t,s}
f_s ds+\int_t^T
\varGamma_{t,s-}d\overline R_s \Bigm| \cF _t \Biggr],
\quad t\in[0,T],\ \text{a.s.}
\]
To conclude, recall that $\varGamma_{t,s} \geq0$ a.s. and, by
assumptions, $\overline\xi\geq0$, $f_s \geq0$, and
$\int_t^Td\overline R_s\ge0$. Consequently, it follows that, for all
$t\in[0,T]$,\ $\overline Y_t \geq0$ a.s. Since $Y^1$ and $Y^2$ are
\cadlag processes, we obtain that
$Y^1_t\le Y^2_t$ a.s., and the conclusion follows.
\end{proof}

Notice that assumptions ${(H1)}$--${(H6)}$ made on $f_1$ are imposed
only to ensure the existence of a solution $(Y^1,Z^1,V^1,M^1)$. The
following corollary, which follows immediately from Proposition~\ref
{comp-thm}, gives again a uniqueness result for GBSDE \eqref
{RBSDE-jumps} in $\varXi^p$ in dimension $1$.
\begin{cor}
Let $p\ge2$ and assume ${(H1)}$--${(H5)}$ and $(H6')$. Then there
exists at most one solution $(Y,Z,V,M)$ to GBSDE \eqref{RBSDE-jumps}
in $\varXi^p$.
\end{cor}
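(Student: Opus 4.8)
The plan is to read off uniqueness directly from the comparison principle of Proposition~\ref{comp-thm}, used in both directions, and then to recover the remaining three components from the a priori estimate of Lemma~\ref{Lemma9}.

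Let $(Y^1,Z^1,V^1,M^1)$ and $(Y^2,Z^2,V^2,M^2)$ be two solutions of GBSDE~\eqref{RBSDE-jumps} lying in $\varXi^p$ and associated with the same data $(\xi,f+dR)$, where $f$ satisfies $(H1)$--$(H5)$ and $(H6')$. Since $(H6')$ implies $(H6)$ (as recalled just before Lemma~\ref{Linear-GBSDE-Comp}), the generator $f$ simultaneously meets the hypotheses imposed on $f_1$ (i.e.\ $(H1)$--$(H6)$) and on $f_2$ (i.e.\ $(H1)$--$(H5)$, $(H6')$) in Proposition~\ref{comp-thm}, and $(H1)$ supplies the integrability required of $\xi$ and $R$. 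Applying that proposition to the pair of data $(\xi_1,f_1+dR_1)=(\xi_2,f_2+dR_2)=(\xi,f+dR)$ and to the solutions $(Y^1,Z^1,V^1,M^1)$, $(Y^2,Z^2,V^2,M^2)$, all three comparison hypotheses $\xi^1\le\xi^2$, $f_1(t,Y^1_t,Z^1_t,V^1_t)\le f_2(t,Y^1_t,Z^1_t,V^1_t)$ and $dR^1\le dR^2$ hold with equality, whence $Y^1_t\le Y^2_t$ for all $t\in[0,T]$, a.s. Interchanging the two solutions and repeating the argument yields $Y^2_t\le Y^1_t$ for all $t$, a.s. Both processes being \cadlag, they are indistinguishable; write $Y:=Y^1=Y^2$.

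It remains to identify $(Z,V,M)$. Subtracting the two copies of \eqref{RBSDE-jumps} and using $Y^1=Y^2$, the quadruple $(\bar Y,\bar Z,\bar V,\bar M):=(0,\,Z^1-Z^2,\,V^1-V^2,\,M^1-M^2)$ solves a GBSDE with zero terminal value, vanishing finite-variation part, and a generator which — by exactly the computation carried out in the uniqueness part of the proof of Theorem~\ref{existence-uniqueness-in-L2-multi-case}, using $(H3)$, $(H5)$ and $(H6)$ — satisfies assumption $(A)$ with $f_t\equiv0$. Since $p\ge2$ and the horizon is finite, $(\bar Y,\bar Z,\bar V,\bar M)\in\varXi^p\subset\varXi^2$, so Lemma~\ref{Lemma9} applies; taking $q=t=0$ there, the right-hand side of~\eqref{exponential-estimate-Z-V-K-L2} vanishes, which forces $\int_0^T|\bar Z_s|^2\,ds=0$, $\int_0^T\int_U|\bar V_s(e)|^2\lambda(de)\,ds=0$ and $[\bar M]_T=0$ a.s.; hence $\bar Z=0$, $\bar V=0$, $\bar M=0$, and therefore $(Y^1,Z^1,V^1,M^1)=(Y^2,Z^2,V^2,M^2)$. (One could equivalently invoke Lemma~\ref{Lp-estimates} in place of Lemma~\ref{Lemma9}, or simply observe that, since $(H6')$ entails $(H6)$, the statement is already a special case of Theorem~\ref{existence-uniqueness-in-Lp-multi-case-1<p<2}.)

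There is no genuine obstacle here: the assertion is a corollary and the argument is short. The only points deserving attention are checking that Proposition~\ref{comp-thm} is applicable with $f$ playing the role of both $f_1$ and $f_2$ — which hinges solely on the implication $(H6')\Rightarrow(H6)$ — and using the standard symmetry trick of applying the comparison inequality to both orderings of the two solutions; the passage from $Y^1=Y^2$ to equality of $(Z,V,M)$ is then routine.
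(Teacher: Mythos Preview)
Your argument is correct and matches the paper's intent: the paper states only that the corollary ``follows immediately from Proposition~\ref{comp-thm}'', and you have spelled out precisely that route---apply the comparison twice with the roles exchanged to get $Y^1=Y^2$, then recover $(Z,V,M)$ by a standard a~priori estimate (or, equivalently, by the Doob--Meyer/orthogonal-decomposition uniqueness once the $ds$ term and the martingale part in the difference equation are separated). Your parenthetical remark that $(H6')\Rightarrow(H6)$ already makes this a special case of Theorem~\ref{existence-uniqueness-in-Lp-multi-case-1<p<2} is also a valid shortcut.
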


\section{Generalized BSDEs with random terminal time}
In this section, we study the issue of existence and uniqueness of $\L
^p(p\ge2)$-solutions of GBSDEs with random terminal time. We follow
the approach in \cite[Section~4]{P99}. Let $\t$ be an $\cF$-stopping
time, not necessarily bounded. Assumptions considered in the case of
GBSDE with constant time (precisely, $(H2)$, $(H3)$, $(H5)$, and
$(H6)$) still hold except for $(H4)$ and $(H1)$,
for which we give the analogues for $p\ge2$:\looseness=1
{\setlength\leftmargini{30pt}
\begin{itemize}
\item[$(H4')$] $\forall r>0$, $\forall n\in\N$, the mapping $t\in
[0,T]\to\Sup_{|y|\le r}|f(t,y,0,0)-f(t,0,0,0)|$ belongs to $\L
^1(\varOmega\times(0,n))$.
\item[$(H1')$] For some $\rho\in\R$
such that $\rho>\nu:=\mu+\frac{2pL^2}{\alpha_p}$, where $\alpha
_p=\min(\frac{p}{2},p(p-1)3^{1-p})$,
%
\begin{equation}
\E \Biggl[e^{\rho p\t}|\xi|^p\,{+} \Biggl(\int_0^\t
e^{\rho\t
}\big|f(s,0,0,0)\big| ds \Biggr)^p\,{+} \Biggl(\int
_0^{\t}e^{\rho\t
}d|R|_s
\Biggr)^p \Biggr]{<}\,\infty.
\end{equation}
\end{itemize}}%
Finally, we will need the following additional assumption on $\xi$ and
$f$: \\
$(H7)$ $\xi$ is $\cF_\t$-measurable, and $\E [ (\int_0^\t e^{\rho\t}|f(t,\xi_t,\eta_t,\gamma_t)| ds )^p
]<+\infty$,
where $\xi_t=\E(\xi|\cF_t)$ and $(\eta,\gamma,N)$ are given by
the martingale representation
\[
\xi=\E(\xi)+\int_0^{+\infty}\eta_s
 dW_s+\int_0^{+\infty}\int
_U\gamma_s(e)\widehat\pi(de,ds)+\xch{N_\t,}{N_\t}
\]
with $N$ orthogonal to $W$ and $\widetilde\pi$. Moreover, the
following holds:
\[
\E \Biggl[ \Biggl(\int_0^{+\infty}|
\eta_s|^2 ds \Biggr)^\frac {p} {2}+ \Biggl(\int
_0^{+\infty}\int_U\big|
\gamma_s(e)\big|^2\,\lambda (de)ds \Biggr)^\frac{p}
{2} +[N]^\frac{p} {2}_\t \Biggr]<+\infty.
\]

Next, let us make precise the notion of a solution of GBSDE with random
terminal time.
\begin{defin}
We say that a quadruple $(Y,Z,V,M)\in\cS\times\cH(0,T)\times\cP
\times\M_{\mathit{loc}}$ with values in $\R^k\times\R^{k\times d}\times\R
^k\times\R^k$ is a solution of GBSDE \eqref{RBSDE-jumps} with random
terminal time $\t$ and data $(\xi,f+dR)$ if
\begin{itemize}
\item on $\{t\ge\t\}$, $ Y_t=\xi$ and $Z_t=V_t=M_t=0$, $\P$-a.s.,
\item$t\to f(t,Y_t,Z_t,V_t)\1_{\{t\le\t\}}\in L^1(0,+\infty)$,
$Z\in\L^2_{\mathit{loc}}(W)$, $V\in G_{\mathit{loc}}(\pi)$, and
\item$\P$-a.s., for all $t\in[0,T]$,
\begin{align}
\label{GBSDE-jumps-random-time}
Y_{t\wedge\t}&=Y_{T\wedge\t}+\int^{T\wedge\t}_{t\wedge\t}f(s,Y_s,Z_s,V_s) ds+\int_{t\wedge\t}^{T\wedge\t}dR_s -\int^{T\wedge\t}_{t\wedge\t}Z_s dW_s\nn\\
&\quad -\int_{t\wedge\t}^{T\wedge\t}\int_UV_{s}(e)\widehat\pi (de,ds)-\int_{t\wedge\t}^{T\wedge\t}dM_s.
\end{align}
\end{itemize}
Furthermore, a solution is said to be $\L^p$ if we have
\begin{align}
\label{estimate-solution-random-terminal-time-Lp-def}
&\E \Biggl[e^{p\rho(t\wedge\t)} |Y_{t\wedge\t}|^p +\int_{0}^{T\wedge\t} e^{p\rho s} |Y_{s}|^{p}ds + \int_{0}^{T\wedge\t} e^{p\rho s}|Y_{s}|^{p-2} |Z_s|^2 ds\nn\\
&\qquad + \int_{0}^{T\wedge\t} e^{p\rho s}|Y_s|^{p-2} \| V_s\|^2_{L^2_\lambda}ds + \int_{0}^{T\wedge\t}e^{p\rho s}|Y_{s}|^{p-2}d[M]_s \Biggr]\nn\\
& \quad < +\infty.
\end{align}
\end{defin}
\begin{pro}\label{prop-uniqueness-random-time}
Assume $(H1')$, $(H2)$, $(H3)$, $(H4')$, $(H5)$, $(H6)$, and $(H7)$.
Then, there exists at most one solution to GBSDE \eqref{RBSDE-jumps}
that satisfies estimate
\eqref{estimate-solution-random-terminal-time-Lp-def}.
\end{pro}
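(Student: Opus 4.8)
The plan is to adapt the argument of \cite[Section~4]{P99}: starting from two $\L^p$-solutions, I would show that their difference vanishes by means of an exponentially weighted It\^o expansion of the function $|\cdot|^p$, exploiting the structural assumption $\rho>\nu$ to give the drift the right sign, and the integrability built into \eqref{estimate-solution-random-terminal-time-Lp-def} to annihilate the boundary term as the time horizon tends to infinity. Concretely, let $(Y,Z,V,M)$ and $(Y',Z',V',M')$ be two solutions of \eqref{GBSDE-jumps-random-time} for the same data $(\xi,f+dR)$, both satisfying \eqref{estimate-solution-random-terminal-time-Lp-def}, and set $(\bar Y,\bar Z,\bar V,\bar M)=(Y-Y',Z-Z',V-V',M-M')$. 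On $\{t\ge\t\}$ this quadruple is identically zero, and on $[0,\t]$ it solves the GBSDE with terminal value $0$ at $\t$, with no $dR$-term, and with generator $g_s:=f(s,Y_s,Z_s,V_s)-f(s,Y'_s,Z'_s,V'_s)$; moreover $s\mapsto g_s\1_{\{s\le\t\}}\in L^1(0,+\infty)$, since this holds separately for $Y$ and $Y'$. By $(H3)$, $(H5)$, $(H6)$, exactly as in the uniqueness part of Theorem~\ref{existence-uniqueness-in-L2-multi-case}, one has the pointwise bound
\[
\operatorname{\widehat{sgn}}(\bar Y_s)\,g_s\le\mu|\bar Y_s|+L|\bar Z_s|+L\|\bar V_s\|_{\cL^2_\lambda},
\]
that is, assumption $(A)$ holds for $g$ with $f_s\equiv0$.

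\textbf{Weighted energy estimate.} Since $p\ge2$, I would apply It\^o's formula to $e^{p\rho s}|\bar Y_s|^p$ on the stochastic interval $[t\wedge\t,T\wedge\t]$, using the pointwise second-order lower bounds \eqref{estimate-traceZ-Lp-geq-2}, \eqref{estimate-martingale-part-Lp-geq-2}, \eqref{estimate-poisson-part-Lp-geq-2} exactly as in Step~2 of the proof of Lemma~\ref{Lp-estimates}. Estimating $p|\bar Y_s|^{p-2}\bar Y_s g_s$ from above via the sign inequality above together with Young's inequality, and choosing the Young parameter so that the resulting $|\bar Y_s|^{p-2}|\bar Z_s|^2$ and $|\bar Y_s|^{p-2}\|\bar V_s\|^2_{\cL^2_\lambda}$ contributions are absorbed by the coefficient $\alpha_p$ on the left-hand side, the drift coefficient of $e^{p\rho s}|\bar Y_s|^p$ becomes $\le p(\nu-\rho)<0$. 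After a standard localization making the $W$-, $\widehat\pi$- and $M$-stochastic integrals true martingales (their brackets being controlled, via Young's inequality, by the quantities appearing in \eqref{estimate-solution-random-terminal-time-Lp-def}), taking expectations, discarding the nonnegative left-hand terms, and letting the localization index tend to infinity by Fatou's lemma, I obtain
\[
\E\bigl[e^{p\rho(t\wedge\t)}|\bar Y_{t\wedge\t}|^p\bigr]\le\E\bigl[e^{p\rho(T\wedge\t)}|\bar Y_{T\wedge\t}|^p\bigr],\qquad t\le T.
\]

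\textbf{Passage to the limit and conclusion (the main obstacle).} On $\{\t\le T\}$ we have $\bar Y_{T\wedge\t}=\bar Y_\t=\xi-\xi=0$, so the right-hand side above equals $\E[e^{p\rho T}|\bar Y_T|^p\1_{\{T<\t\}}]\le 2^{p-1}\E[e^{p\rho T}(|Y_T|^p+|Y'_T|^p)\1_{\{T<\t\}}]$, and this is the step where I expect the real work to lie. I would use that \eqref{estimate-solution-random-terminal-time-Lp-def} gives $\E\int_0^\t e^{p\rho s}|Y_s|^p\,ds<+\infty$ (and likewise for $Y'$), whence $\int_n^{n+1}\E[e^{p\rho s}|Y_s|^p\1_{\{s<\t\}}]\,ds\to0$, so there is a sequence $T_n\uparrow\infty$ along which $\E[e^{p\rho T_n}|Y_{T_n}|^p\1_{\{T_n<\t\}}]\to0$, and similarly for $Y'$. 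Taking $T=T_n$ in the previous estimate and letting $n\to\infty$ yields $\E[e^{p\rho(t\wedge\t)}|\bar Y_{t\wedge\t}|^p]=0$ for every $t\ge0$, hence $\bar Y_{t\wedge\t}=0$ a.s.\ for all $t$, and by right-continuity $\bar Y\equiv0$ on $[0,\t]$. Substituting $\bar Y\equiv0$ into the difference equation shows that $N_t:=\int_0^{t\wedge\t}\bar Z_s\,dW_s+\int_0^{t\wedge\t}\int_U\bar V_s(e)\widehat\pi(de,ds)+\bar M_{t\wedge\t}$ coincides with the continuous finite-variation process $\int_0^{t\wedge\t}g_s\,ds$; being a local martingale that is continuous and of finite variation, $N\equiv0$, and taking its brackets against $W$ and against $\widehat\pi(A,\cdot)$ for $\lambda(A)<\infty$ and invoking the orthogonality of $M$ (Lemma~\ref{loc-mart-decomposition}) forces $\bar Z=0$, $\bar V=0$, $\bar M=0$. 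This establishes uniqueness.
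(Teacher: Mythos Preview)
Your argument is correct and follows the same overall architecture as the paper: It\^o's formula applied to $e^{p\rho s}|\bar Y_s|^p$ on $[t\wedge\t,T\wedge\t]$, the pointwise lower bounds \eqref{estimate-traceZ-Lp-geq-2}--\eqref{estimate-poisson-part-Lp-geq-2}, and the structural inequality coming from $(H3)$, $(H5)$, $(H6)$ to make the drift nonpositive. The one genuine difference lies in the passage $T\to\infty$. The paper does not extract a subsequence; instead it repeats the energy inequality with an auxiliary exponent $\rho'\in(\nu,\rho)$, obtaining
\[
\E\bigl[e^{p\rho'(t\wedge\t)}|\bar Y_{t\wedge\t}|^p\bigr]\le e^{p(\rho'-\rho)T}\,\E\bigl[e^{p\rho(T\wedge\t)}|\bar Y_{T\wedge\t}|^p\bigr],
\]
and then uses that the second factor is bounded uniformly in $T$ by \eqref{estimate-solution-random-terminal-time-Lp-def} while $e^{p(\rho'-\rho)T}\to0$. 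Your route instead exploits the integrability of $\int_0^\t e^{p\rho s}|\bar Y_s|^p\,ds$ (also contained in \eqref{estimate-solution-random-terminal-time-Lp-def}) to find a sequence $T_n\uparrow\infty$ along which the boundary term vanishes. Both are valid; the paper's trick works for every $T$ and avoids a subsequence, while yours avoids the auxiliary exponent $\rho'$. Your identification of $(\bar Z,\bar V,\bar M)=0$ via the local martingale of finite variation is simply a spelled-out version of the paper's one-line appeal to the uniqueness of the semimartingale decomposition.
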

\begin{proof}
Assume that there exist two solutions $(Y,Z,V,M)$ and $(Y',Z',V',M')$
of GBSDE \eqref{GBSDE-jumps-random-time} that satisfy estimate \eqref
{estimate-solution-random-terminal-time-Lp-def}. Set $(\bar Y,\bar
Z,\bar V,\bar M)=(Y-Y',Z-Z',V-V',M-M')$.

Applying It\^o's formula, as
in step $2$ of Proposition~\ref{Lp-estimates}, to $e^{p\rho s}| \bar
{Y}_{s}|^p$ over the interval $[t \wedge\t,T\wedge\t]$, we obtain
an analogue of \eqref{ito-formula-cauchy-p-greater-than-2}
\begin{align}
\label{ito-formula-cauchy-p-greater-than-2-random-terminal-time}
&e^{p\rho(t\wedge\tau)}| \bar{Y}_{t\wedge\tau}|^p\nn \\[1pt]
&\quad  = e^{p\rho(T\wedge\tau)}| \bar{Y}_{T\wedge\tau}|^p\nn\\[1pt]
&\qquad +p\int_{t\wedge\t}^{T\wedge\t}e^{p\rho s} \bigl[ \bar{Y}_s | \bar {Y}_s|^{p-2} \bigl(f(s,Y_s,Z_s,V_s)-f \bigl(s,Y'_s,Z'_s,V'_s \bigr) \bigr)-\rho |\bar{Y}_s|^p \bigr]ds\nn\\[1pt]
&\qquad - p \int_{t\wedge\t}^{T\wedge\t}e^{p\rho s} \bar{Y}_{s-} |\bar {Y}_{s-}|^{p-2} d \bar{M}_s - p \int_{t\wedge\t}^{T\wedge\t} e^{p\rho s}\bar{Y}_{s}|\bar{Y}_{s}|^{p-2} \bar{Z}_s dW_s \nn\\[1pt]
&\qquad - p \int_{t\wedge\t}^{T\wedge\t} \int_U e^{p\rho s}\bar {Y}_{s-} |\bar{Y}_{s-}|^{p-2} \bar{V}_s(e) \widehat{\pi}(de,ds)\nn\\[1pt]
&\qquad - \frac{1}{2}\int_{t\wedge\t}^{T\wedge\t} e^{p\rho s}\operatorname{Trace}\bigl(D^2\theta(\bar{Y}_s) \bar{Z}_s\bar{Z}_s^t\bigr) ds\nn\\[1pt]
&\qquad - \int_{t\wedge\t}^{T\wedge\t} \int_U e^{p\rho s} \bigl( \big|\bar {Y}_{s-} + \bar{Y}_s(e) \big|^p \nn\\[1pt]
&\qquad - |\bar{Y}_{s-}|^p - p \bar{Y}_{s-} |\bar {Y}_{s-}|^{p-2} \bar{Y}_s(e) \bigr) \pi(de,ds)-\aleph_t,
\end{align}
where
\begin{align}
\aleph_t&=\frac{1}{2} \int_{t\wedge\t}^{T\wedge\t}e^{p\rho s}\Sum_{1\le i,j\le d}\frac{\partial^2 \theta}{\partial y_i\partial y_j}(Y_s)d\bigl[\bar{M}^i,\bar{M}^j\bigr]^c_s\nn\\
&\quad + \sum_{t\wedge\t< s \leq T\wedge\t} e^{p\rho s} \bigl( |\bar{Y}_{s-} + \Delta\bar{M}_s |^p - |\bar{Y}_{s-}|^p - pY_{s-} |\bar{Y}_{s-}|^{p-2}\Delta\bar{M}_s \bigr).
\end{align}
The following estimates, which are analogues of \eqref
{estimate-martingale-part-Lp-geq-2} and \eqref
{estimate-poisson-part-Lp-geq-2}, hold:
%
\begin{equation}
\label{estimate-martingale-part-Lp-geq-2-random-time} \aleph_t\ge\alpha_p\int
_{t\wedge\t}^{T\wedge\t}e^{p\rho s}|\bar {Y}_{s-}|\xch{d[\bar{M}]_s,}{d[\bar{M}]_s}
\end{equation}
and
\begin{align}
\label{estimate-poisson-part-Lp-geq-2-random-time}
&- \int_{t\wedge\t}^{T\wedge\t} \int_U e^{p\rho s} \bigl( \big|\bar {Y}_{s-} +\bar{V}_s(e) \big|^p - |\bar{Y}_{s-}|^p- p\bar{Y}_{s-} |\bar {Y}_{s-}|^{p-2}\bar{Y}_s(e) \bigr) \pi(de,ds)\nn\\
&\quad \le-p(p-1)3^{1-p}\int_{t\wedge\t}^{T\wedge\t}e^{p\rho s}|\bar {Y}_{s-}|^{p-2}\big|\bar{Y}_s(e)\big|^2\pi(de,ds),
\end{align}
where $\alpha_p=\min(\frac{p}{2},p(p-1)3^{1-p})$.

Therefore, rearranging \eqref
{ito-formula-cauchy-p-greater-than-2-random-terminal-time}, in view of
\eqref{estimate-traceZ-Lp-geq-2}, \eqref
{estimate-martingale-part-Lp-geq-2-random-time}, and \eqref
{estimate-poisson-part-Lp-geq-2-random-time} yields
{\allowdisplaybreaks
\begin{align}
\label{ito-formula-cauchy-p-greater-than-2-random-terminal-time-2}
& e^{p\rho(t\wedge\tau)}| \bar{Y}_{t\wedge\tau}|^p + \alpha_p\int_{t\wedge\t}^{T\wedge\t}e^{p\rho s} | \bar{Y}_s|^{p-2} |\bar {Z}_s|^2 ds \nn\\
& \qquad +\alpha_p\int_{t\wedge\t}^{T\wedge\t}e^{p\rho s} \int_U|\bar {Y}_s|^{p-2}\big| \bar{V}_s(e)\big|^2\pi(de,ds)+\alpha_p\int _{t\wedge\t }^{T\wedge\t}e^{p\rho s}|\bar{Y}_{s-}|d[ \bar{M}]_s\nn\\
& \quad \le e^{p\rho(T\wedge\tau)}| \bar{Y}_{T\wedge\tau}|^p\nn\\
& \qquad +p\int_{t\wedge\t}^{T\wedge\t}e^{\rho s} \bigl[\bar {Y}_s | \bar{Y}_s|^{p-2} \bigl(f(s,Y_s,Z_s,V_s)-f \bigl(s,Y'_s,Z'_s,V'_s \bigr) \bigr)-\rho|\bar{Y}_s|^p \bigr]ds \nn\\
& \qquad - p \int_{t\wedge\t}^{T\wedge\t}e^{\rho s} \bar {Y}_{s-} |\bar{Y}_{s-}|^{p-2} d \bar{M}_s - p \int_{t\wedge\t }^{T\wedge\t}e^{\rho s} \bar{Y}_{s}|\bar{Y}_{s}|^{p-2} \bar{Z}_s dW_s\nn \\
& \qquad - p \int_{t\wedge\t}^{T\wedge\t} \int _U e^{\rho s} \bar{Y}_{s-} | \bar{Y}_{s-}|^{p-2} \bar{V}_s(e)\,\widehat{ \pi}(de,ds).
\end{align}}%
But from the assumptions on $f$ (using \eqref{estimate-Yf-Lp-p>2} with
$\epsilon=\frac{\alpha_p}{p}$) and Young's inequality we have that
\begin{align}
&\bar{Y}_s | \bar{Y}_s|^{p-2}\bigl(f(s,Y_s,Z_s,V_s)-f\bigl(s,Y'_s,Z'_s,V'_s\bigr) \bigr)-\rho|\bar{Y}_s|^p\nn\\
&\quad \le\biggl(\mu+\frac{2pL^2}{\alpha_p}-\rho\biggr)|\bar{Y}_s|^p+\frac{\alpha_p}{p}|\bar{Y}_s|^{p-2}|\bar{Z}_s|^2+\frac{\alpha_p}{p}|\bar{Y}_s|^{p-2}\|\bar{V}_s\|^2_{\L^2_\lambda}\nn\\
&\quad \le\frac{\alpha_p}{p}|\bar{Y}_s|^{p-2}|\bar{Z}_s|^2+\frac{\alpha_p}{p}|\bar{Y}_s|^{p-2}\|\bar{V}_s\|^2_{\L^2_\lambda}.
\end{align}
Furthermore, observe that by the integrability conditions on the
solution all the local martingales appearing in
\eqref{ito-formula-cauchy-p-greater-than-2-random-terminal-time-2} are
uniformly integrable. Moreover, the following holds:
\begin{align}
\label{expect-equali-pi-lambda}
&\E\int_{t\wedge\t}^{T\wedge\t}e^{p\rho s}\int_U|Y_s|^{p-2}\big|V_s(e)\big|^2\pi(de,ds)\nn\\
&\quad =\E\int_{t\wedge\t}^{T\wedge\t}e^{p\rho s} \int_U|Y_s|^{p-2}\big|V_s(e)\big|^2\lambda(de) ds.
\end{align}
Thus, taking the expectation in \eqref
{ito-formula-cauchy-p-greater-than-2-random-terminal-time-2}, we
obtain, in view of the above, that
%
\begin{equation}
\E e^{p\rho(t\wedge\tau)}| \bar{Y}_{t\wedge\tau}|^p\le\E
e^{p\rho(T\wedge\tau)}| \bar{Y}_{T\wedge\tau}|^p.
\end{equation}
Note that the same result holds with $\rho$ replaced by $\rho'$, with
$\mu+\frac{p^2L^2}{\alpha_p^2}<\rho'<\rho$. Therefore, we have,
for any $0\le t\le T$,
%
\begin{equation}
\E e^{p\rho' (t\wedge\tau)}| \bar{Y}_{t\wedge\tau}|^p\le e^{p(\rho'-\rho)T}
\E e^{p\rho(T\wedge\tau)}| \bar{Y}_{T\wedge
\tau}|^p.
\end{equation}
Consequently, letting $T\rightarrow+\infty$, we deduce in view of
estimate \eqref{estimate-solution-random-terminal-time-Lp-def} that
$\bar{Y}_t=0$.

Since $(Y,Z,V,M)$ and $(Y',Z',V',M')$ satisfy GBSDE \eqref
{GBSDE-jumps-random-time} with $Y=Y'$, then by the uniqueness of the
Doob--Meyer decomposition of semimartingales it follows that
$(Z,V,M)=(Z',V',M')$, whence the uniqueness of the solution of \eqref
{GBSDE-jumps-random-time}.
\end{proof}
\begin{pro}
Assume that $(H1')$, $(H2)$, $(H3)$, $(H4')$, $(H5)$, $(H6)$, and
$(H7)$ are in force. Then, GBSDE \eqref{RBSDE-jumps} has a solution satisfying
{\allowdisplaybreaks
\begin{align}
\label{estimate-solution-random-terminal-time}
&\E \Biggl[\sup_{t\ge0}e^{p\rho(t\wedge\t)}|Y_{t\wedge\tau}|^p+e^{p\rho(t\wedge\t)} |Y_{t\wedge\t}|^p+ \int_{0}^{T\wedge\t} e^{p\rho s}|Y_{s}|^{p} ds\nn\\
&\qquad + \int_{0}^{T\wedge\t} e^{p\rho s}|Y_{s}|^{p-2} |Z_s|^2 ds+ \int_{0}^{T\wedge\t} e^{p\rho s} |Y_s|^{p-2}\|V_s\| ^2_{L^2_\lambda} ds\nn\\
&\qquad + \int_{0}^{T\wedge\t} e^{p\rho s}|Y_{s-}|^{p-2} d[ M]_s \Biggr]\nn\\
&\quad \le C\E \Biggl[e^{ p\rho\t}|\xi|^p+ \Biggl(\int_0^\t e^{\rho\t}\big|f(s,0,0,0)\big| ds\Biggr)^p+ \Biggl(\int_0^{\t}e^{\rho\t}d|R|_s\Biggr)^p \Biggr].
\end{align}}%
Moreover,
\begin{align}
\label{estimate-solution-random-terminal-time-2}
&\E \Biggl[ \Biggl(\int_{0}^{\t}e^{2\rho s}|Z_s|^2 ds \Biggr)^\frac {p} {2}+ \Biggl(\int_{0}^{\t}\int_Ue^{2\rho s}\big| V_{s}(e)\big|^2 \lambda (de)ds \Biggr)^\frac{p} {2}\nn \\
&\qquad + \Biggl(\int_{0}^{\t}e^{2\rho s}d[M]_s \Biggr)^\frac {p} {2} \Biggr]\nn                                                                                   \\
&\quad \le C\E \Biggl[e^{ p\rho\t}|\xi|^p+ \Biggl(\int _0^\t e^{\rho\t }\big|f(s,0,0,0)\big| ds \Biggr)^p+ \xch{\Biggl(\int_0^{\t}e^{\rho\t }d|R|_s \Biggr)^p \Biggr],}{\Biggl(\int_0^{\t}e^{\rho\t }d|R|_s \Biggr)^p \Biggr]}
\end{align}
for some constant $C>0$ depending only on $p$, $L$, and $\mu$.
\end{pro}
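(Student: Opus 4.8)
The plan is to construct a solution on $[0,+\infty)$ by solving the equation on the bounded horizons $[0,n]$, with terminal time $n\wedge\t$, and then letting $n\to+\infty$, in the spirit of \cite[Section~4]{P99}. \textbf{Step 1 (approximation).} Fix $n\in\N$ and set $\xi^n:=\E[\xi\mid\cF_{n\wedge\t}]$. Consider on $[0,n]$ the GBSDE with terminal condition $\xi^n$, generator $\1_{\{s\le\t\}}f$ and driving finite-variation process $\1_{\{s\le\t\}}\,dR$. Using $(H1')$ and $(H4')$ one checks that these restricted data satisfy $(H1)$--$(H6)$ on $[0,n]$, so Theorem~\ref{existence-uniqueness-in-Lp-multi-case-1<p<2} gives a unique $(Y^n,Z^n,V^n,M^n)\in\varXi^p$ on $[0,n]$; extending it by $Y^n_t=\xi^n$, $Z^n_t=V^n_t=0$ and $M^n_t=M^n_{n\wedge\t}$ for $t\ge n\wedge\t$ yields a process solving \eqref{GBSDE-jumps-random-time} with terminal time $n\wedge\t$.

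\textbf{Step 2 (uniform a priori estimates).} Apply It\^o's formula to $e^{p\rho s}|Y^n_s|^p$ over $[t\wedge\t,n\wedge\t]$, as in \eqref{ito-formula-cauchy-p-greater-than-2-random-terminal-time}, and use the bounds \eqref{estimate-traceZ-Lp-geq-2}, \eqref{estimate-martingale-part-Lp-geq-2} and \eqref{estimate-poisson-part-Lp-geq-2}. For the generator term, bundle $(H3)$, $(H5)$, $(H6)$ into assumption $(A)$ with $f_t=|f(t,0,0,0)|$ and use \eqref{estimate-Yf-Lp-p>2} with $\epsilon=\alpha_p/p$; after collecting the weight derivative $-p\rho|Y^n_s|^p$ with the drift, the coefficient in front of $|Y^n_s|^p$ equals $\mu+\frac{2pL^2}{\alpha_p}-\rho=\nu-\rho$, which is strictly negative by $(H1')$, so that term is discarded. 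Taking expectations, applying the BDG inequality to the three local martingale terms and the $\pi$-versus-$\lambda$ comparison employed in Step~1 of Lemma~\ref{Lp-estimates}, one obtains \eqref{estimate-solution-random-terminal-time} and \eqref{estimate-solution-random-terminal-time-2} with a constant $C$ depending only on $p$, $L$, $\mu$ and \emph{not} on $n$; finiteness of the right-hand sides, hence of every term on the left (in particular of $\E[e^{p\rho(n\wedge\t)}|\xi^n|^p]$), is guaranteed by $(H1')$ and $(H7)$.

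\textbf{Step 3 (Cauchy property).} For $m\ge n$, the difference $(Y^m-Y^n,Z^m-Z^n,V^m-V^n,M^m-M^n)$, restricted to $[0,n\wedge\t]$, solves a GBSDE whose generator difference in $y$ satisfies $(A)$ with $f_t\equiv0$, whose $dR$-part cancels, and whose terminal value at time $n\wedge\t$ is $Y^m_{n\wedge\t}-\xi^n$. The weighted analogue of Lemma~\ref{Lp-estimates-variation} (obtained, as in Step~2, from the It\^o computation on $e^{p\rho\cdot}$) then gives
\[
\E\Big[\sup_{t\ge0}e^{p\rho(t\wedge\t)}|Y^m_{t\wedge\t}-Y^n_{t\wedge\t}|^p+\cdots\Big]\le C\,\E\big[e^{p\rho(n\wedge\t)}|Y^m_{n\wedge\t}-\xi^n|^p\big].
\]
It remains to show the right-hand side tends to $0$ as $n,m\to+\infty$. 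Writing $Y^m_{n\wedge\t}=\E\big[\xi^m+\int_{n\wedge\t}^{m\wedge\t}f(s,Y^m_s,Z^m_s,V^m_s)\,ds+\int_{n\wedge\t}^{m\wedge\t}dR_s\,\big|\,\cF_{n\wedge\t}\big]$ and exploiting the representation of $\xi$ supplied by $(H7)$ together with the integrability in $(H1')$ and $(H7)$, one dominates $\E[e^{p\rho(n\wedge\t)}|Y^m_{n\wedge\t}-\xi^n|^p]$ by tail quantities such as $\E[e^{p\rho\t}|\xi|^p\1_{\{\t>n\}}]$, $\E[(\int_{n}^{\t}e^{\rho\t}|f(s,\xi_s,\eta_s,\gamma_s)|\,ds)^p]$, $\E[(\int_{n}^{\t}e^{\rho\t}d|R|_s)^p]$ and the corresponding tails of $\eta$, $\gamma$, $N$, all of which vanish by dominated convergence. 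This is the step I expect to be the main obstacle: the rest is essentially a weighted rerun of the fixed-horizon proofs, whereas here one must use $(H7)$ carefully to make the discrepancy between $Y^m_{n\wedge\t}$ and $\xi$ disappear.

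\textbf{Step 4 (passage to the limit).} Let $(Y,Z,V,M)$ be the limit of $(Y^n,Z^n,V^n,M^n)$ in the topology associated with the left-hand side of \eqref{estimate-solution-random-terminal-time}. Passing to the limit in the approximating equations on each bounded interval (using that $f$ is continuous in $y$ by $(H2)$ and monotone by $(H3)$, exactly as in the fixed-time proof of Theorem~\ref{existence-uniqueness-in-L2-multi-case}) shows that $(Y,Z,V,M)$ solves \eqref{GBSDE-jumps-random-time}, with $Y_t=\xi$, $Z_t=V_t=0$ and $M$ constant on $\{t\ge\t\}$; the estimates \eqref{estimate-solution-random-terminal-time} and \eqref{estimate-solution-random-terminal-time-2} survive the limit by Fatou's lemma, which in particular yields \eqref{estimate-solution-random-terminal-time-Lp-def}, so that Proposition~\ref{prop-uniqueness-random-time} applies and the solution is the unique one.
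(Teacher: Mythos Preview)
Your overall architecture (solve on $[0,n]$, get uniform weighted estimates, prove Cauchy, pass to the limit) matches the paper. The gap is in how you extend $(Y^n,Z^n,V^n,M^n)$ beyond $n$, and this is exactly the point where your Step~3 breaks down.

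You extend by a \emph{constant}: $Y^n_t=\xi^n=\E[\xi\mid\cF_{n\wedge\t}]$, $Z^n=V^n=0$, $M^n$ frozen. With this choice, the terminal discrepancy in Step~3 is
\[
Y^m_{n\wedge\t}-\xi^n=\E\Bigl[\int_{n\wedge\t}^{m\wedge\t}f(s,Y^m_s,Z^m_s,V^m_s)\,ds+\int_{n\wedge\t}^{m\wedge\t}dR_s\ \Big|\ \cF_{n\wedge\t}\Bigr],
\]
and to get the Cauchy property you would need an $\L^p$ bound on $\int_{n\wedge\t}^{m\wedge\t}f(s,Y^m_s,Z^m_s,V^m_s)\,ds$ with weight $e^{\rho\cdot}$. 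But $f$ has \emph{no} growth control in $y$ (only monotonicity $(H3)$ and the qualitative $(H4')$), so this integral cannot be dominated by the a~priori estimates or by $(H1')$; the tail $\E[(\int_n^\t e^{\rho s}|f(s,\xi_s,\eta_s,\gamma_s)|\,ds)^p]$ you invoke from $(H7)$ simply does not appear in your setup, because your $Y^n$ on $[n,m]$ is not $\xi_s$.

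This is precisely why the paper extends differently: for $t\ge n$ it sets $Y^n_t=\xi_t=\E[\xi\mid\cF_t]$, $Z^n_t=\eta_t$, $V^n_t=\gamma_t$, $M^n_t=N_t$, using the representation supplied by $(H7)$. With this extension, on $[n,m]$ one has $\bar Y_s=Y^m_s-\xi_s$, $\bar Z_s=Z^m_s-\eta_s$, $\bar V_s=V^m_s-\gamma_s$, and the It\^o computation for $e^{p\rho s}|\bar Y_s|^p$ produces
\[
\bar Y_s\,f(s,Y^m_s,Z^m_s,V^m_s)\le \mu|\bar Y_s|^2+L|\bar Y_s||\bar Z_s|+L|\bar Y_s|\|\bar V_s\|_{\cL^2_\lambda}+|\bar Y_s|\,|f(s,\xi_s,\eta_s,\gamma_s)|,
\]
so the only ``forcing'' left is $|f(s,\xi_s,\eta_s,\gamma_s)|$, which is exactly what $(H7)$ controls. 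In other words, $(H7)$ is tailored to this extension; your constant extension leaves $(H7)$ without a handle on the problem. Replace your extension by the martingale-representation one and your Step~3 goes through as in the paper.
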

\begin{proof}
We follow the line of the argument of \cite[Thm.~4.1]{P99}. For each
$n\in\N$, we construct a solution $\{(Y^n,Z^n,V^n,M^n)\}$ as follows.
By Theorem~\ref{existence-uniqueness-in-Lp-multi-case-1<p<2}, on the
interval $[0,n]$,
\begin{align}
\label{RBSDE-jumps-random-time-approximation}
Y^n_{t} & =\E(\xi\mid \cF_n)+\int ^{n}_{t}\1_{[0,\t ]}(s)f\bigl(s,Y^n_s,Z^n_s,V^n_s \bigr) ds+\int_{t}^{n}\1_{[0,\t]}(s)dR_s \nn \\
        & \quad -\int^{n}_{t}Z^n_s  dW_s-\int_{t}^{n}\int _UV^n_{s}(e)\widehat\pi(de,ds)-\int _{t}^{n}dM^n_s,
\end{align}
and for $t\ge n$, we have by assumption $(H7)$ that
$Y_t^n=\xi_t$, $Z^n_t=\eta_t$, $V^n_t(e)=\gamma_t(e)$, $M^n_t=N_t$.

{\bf Step 1.} We first show that $(Y^n,Z^n,V^n,M^n)$ satisfies estimate
\eqref{estimate-solution-random-terminal-time}.
Applying It\^o's formula to $e^{p\rho s}| Y^n_{s}|^p$ over the interval
$[t \wedge\t,T\wedge\t]$ for $0\le t\le T\le n$ and combining
with \eqref{estimate-traceZ-Lp-geq-2}, \eqref
{estimate-martingale-part-Lp-geq-2-random-time}, and \eqref
{estimate-poisson-part-Lp-geq-2-random-time} yield
\begin{align}
& e^{p\rho(t\wedge\tau)}\big| Y^n_{t\wedge\tau}\big|^p +\alpha_p\int_{t\wedge\t}^{T\wedge\t}e^{p\rho s} \big| Y^n_s\big|^{p-2} \big|Z^n_s\big|^2 ds\nn                                                                                \\
& \qquad +\alpha_p\int_{t\wedge\t}^{T\wedge\t}e^{p\rho s} \int_U\big|Y^n_s\big|^{p-2}\big|V^n_s(e)\big|^2 \pi(de,ds)+\alpha_p\int_{t\wedge\t }^{T\wedge\t}e^{p\rho s}\big|Y^n_{s-}\big|d \bigl[M^n\bigr]_s\nn                                \\
& \quad \le e^{p\rho(T\wedge\tau)}\big| Y^n_{T\wedge\tau}\big|^p+p\int _{t\wedge \t}^{T\wedge\t}e^{\rho s} \bigl[Y^n_s \big| Y^n_s\big|^{p-2}f\bigl(s,Y^n_s,Z^n_s,V^n_s \bigr)-\rho\big|Y^n_s\big|^p \bigr]ds\nn                            \\
& \qquad +p\int_{t\wedge\t}^{T\wedge\t}e^{\rho s}Y^n_s \big| Y^n_s\big|^{p-2}dR_s- p \int _{t\wedge\t}^{T\wedge\t}e^{\rho s} Y^n_{s-} \big|Y^n_{s-}\big|^{p-2} dM^n_s \nn                                                \\
& \qquad - p \int_{t\wedge \t}^{T\wedge\t}e^{\rho s} Y^n_{s}\big|Y^n_{s}\big|^{p-2} Z^n_s dW_s \nn\\
& \qquad - p \int_{t\wedge\t}^{T\wedge\t} \int_U e^{\rho s} Y^n_{s-} \big|Y^n_{s-}\big|^{p-2} V^n_s(e) \,\widehat{\pi}(de,ds) \nn.
\end{align}
But, from the assumptions on $f$ combined with Young's inequality we
get, for a small enough constant $\delta>0$,
\begin{align}
y|y|^{p-2} f(t,y,z,v) & \leq \biggl( \mu+\frac{2pL^2}{\alpha _p-p\delta} \biggr) |y|^p +|y|^{p-1}\big|f(t,0,0,0)\big|\nn                                                                \\
                      & \quad + \biggl( \frac{\alpha_p}{p} -\delta \biggr) |y|^{p-2} |z|^2+ \biggl( \frac{\alpha_p}{p} -\delta \biggr) |y|^{p-2} \|v\|_{\L^2_\lambda}.\nn
\end{align}
Then, choosing $\delta$ such that $\mu+\frac{2pL^2}{\alpha
_p-p\delta}< \rho$, we deduce from the above that
\begin{align}
\label{random-time-estimate-n}
& e^{p\alpha(t\wedge\tau)} \big|Y^n_{t\wedge\tau}\big|^p+p \bar\rho\int_{t\wedge\tau}^{T\wedge\tau}e^{p\alpha s}\big|Y^n_{s}\big|^p  ds + p\delta \int_{t\wedge\tau}^{T\wedge\tau} e^{p\alpha s} \big|Y^n_{s}\big|^{p-2} \big|Z^n_s\big|^2 ds \nn \\
& \qquad + \alpha_p\int_{t\wedge\tau}^{T\wedge\tau}e^{p\alpha s}\big|Y^n_{s}\big|^{p-2} \big|V^n_s(e)\big|^2\pi(de,ds)+\alpha_p \int_{t\wedge\tau }^{T\wedge\tau} e^{p\alpha s} \big|Y^n_{s}\big|^{p-2} d\bigl[M^n\bigr]_s\nn                     \\
& \qquad -p \biggl(\frac{\alpha_p}{p} -\delta \biggr)\int_{t\wedge\tau}^{T\wedge \tau}e^{p\alpha s}\big|Y^n_{s}\big|^{p-2} \big\|V^n_s\big\|^2_{\cL^2_\lambda}ds \nn                                                                      \\
& \quad \leq e^{p\alpha(T\wedge\tau)}\big|Y^n_{T\wedge\tau}\big|^p + p \int _{t\wedge\tau}^{T\wedge\tau} e^{p\alpha s}\big|Y^n_s\big|^{p-1}\big|f(s,0,0,0)\big|  ds\nn                                                                               \\
& \qquad +p\int_{t\wedge\t}^{T\wedge\t}e^{\rho s} \big| Y^n_s\big|^{p-1}d|R|_s- p \int _{t\wedge\t}^{T\wedge\t}e^{\rho s} Y^n_{s-} \big|Y^n_{s-}\big|^{p-2} dM^n_s \nn                                                                    \\
& \qquad - p \int_{t\wedge\t}^{T\wedge\t}e^{\rho s} Y^n_{s}\big|Y^n_{s}\big|^{p-2} Z^n_s dW_s\nn                                                                                                                                  \\
& \qquad - p \int_{t\wedge\t}^{T\wedge\t} \int_U e^{\rho s} Y^n_{s-} \big|Y^n_{s-}\big|^{p-2} V^n_s(e)\,\widehat{\pi}(de,ds).
\end{align}
Note that all the local martingales in the last inequality are true
martingales. Thus, taking the expectation in \eqref
{random-time-estimate-n}, we get in view of \eqref
{expect-equali-pi-lambda} that
\begin{align}
\label{ineq5.11}
& \E \Biggl[e^{p\rho(t\wedge\tau)} \big|Y^n_{t\wedge\tau}\big|^p+p \delta \int_{t\wedge\tau}^{T\wedge\tau}e^{p\rho s}\big|Y^n_{s}\big|^p  ds + p\delta\int_{t\wedge\tau}^{T\wedge\tau}e^{p\rho s} \big|Y^n_{s}\big|^{p-2} \big|Z^n_s\big|^2 ds\nn  \\
& \qquad +p\delta\int_{t\wedge\tau}^{T\wedge\tau}e^{p\rho s}\big|Y^n_{s}\big|^{p-2} \big\|V^n_s\big\|_{\cL^2_\lambda} ds +\alpha_p \int _{t\wedge\tau}^{T\wedge\tau} e^{p\rho s} \big|Y^n_{s}\big|^{p-2} d\bigl[M^n\bigr]_s \Biggr]\nn \\
& \quad \leq\E [X ],
\end{align}
where
\begin{align*}
X & =e^{p\rho(T\wedge\tau)}\big|Y^n_{T\wedge\tau}\big|^p + p \int _{t\wedge \tau}^{T\wedge\tau} e^{p\rho s}\big|Y^n_s\big|^{p-1}\big|f(s,0,0,0)\big|  ds\nn \\
  & \quad +p\int_{t\wedge\tau}^{T\wedge\tau}\,e^{p\rho s}\big|Y^n_{s-}\big|^{p-1}\xch{d|R|_s.}{d|R|_s}
\end{align*}
Next, as in the proof of Lemma~\ref{Lp-estimates}, including a $\sup_{s\in[t,T]}$ in \eqref{random-time-estimate-n} and applying the BDG
inequality, we get in view of \eqref{ineq5.11} that
%
\begin{equation}
\label{eq5.10} \E \Bigl[\sup_{s\in[t,T]}e^{p\rho(s\wedge\tau)}
\big|Y^n_{s\wedge
\tau}\big|^p \Bigr]\le C_p\E[X].
\end{equation}
But by Young's inequality we have
\begingroup
\abovedisplayskip=7pt
\belowdisplayskip=7pt
\begin{align}
\label{young-ineq-random-time-Yf}
 & p\E\int_{t\wedge\tau}^{T\wedge\tau} \,e^{p\rho s}|Y_s|^{p-1}\big|f(s,0,0,0)\big| ds\nn                                                                                        \\[-2pt]
 & \quad \le p\E\Biggl(\Sup_{s\in[t\wedge\t,T\wedge\t]}e^{(p-1)\rho s}|Y_{t}|^{p-1} \int_{t\wedge\tau}^{T\wedge\tau}e^{\rho s}\big|f(s,0,0,0)\big| ds \Biggr)\nn                       \\[-2pt]
 & \quad \le\frac{1}6\E\Bigl(\Sup_{s\in[t\wedge\t,T\wedge\t]}e^{p\rho s}|Y_{t}|^{p} \Bigr)+
 \xch{d'_p\E \Biggl(\int_{t\wedge\tau}^{T\wedge\tau }e^{\rho s}\big|f(s,0,0,0)\big|  ds \Biggr)^p,}{d'_p\E \Biggl(\int_{t\wedge\tau}^{T\wedge\tau }e^{\rho s}\big|f(s,0,0,0)\big|  ds \Biggr)^p}
\end{align}
and
\begin{align}
\label{young-ineq-random-time-YR}
 & p\E\int_{t\wedge\tau}^{T\wedge\tau} \,e^{p\rho s}|Y_{s-}|^{p-1} d|R|_s\nn                                                                                         \\[-2pt]
 & \quad \le\frac{1}6E\bigl(\Sup_{s\in[t\wedge\t,T\wedge\t]}e^{p\rho s}|Y_{t}|^{p} \bigr)+d''_p\E \Biggl(\int _{t\wedge\tau}^{T\wedge\tau }e^{\rho s}d|R|_s \Biggr)^p.
\end{align}
Consequently, combining \eqref{eq5.10} with \eqref
{young-ineq-random-time-Yf} and \eqref{young-ineq-random-time-YR} and
letting $T\rightarrow+\infty$, we deduce that
\begin{align}
\label{estimate-sup-Yn-random-time}
 & \E \Bigl[\sup_{t\ge0}e^{p\rho(t\wedge\t)} \big|Y^n_{t\wedge\tau }\big|^p \Bigr]\nn                                                                                \\[-2pt]
 & \quad \le C''_p \E \Biggl[e^{p\rho\tau}| \xi|^p+ \Biggl(\int_0^\t e^{\rho s}\big|f(s,0,0,0)\big| ds \Biggr)^p+ \Biggl(\int _0^{\t}e^{\rho s}d|R|_s \Biggr)^p \Biggr] .
\end{align}
Finally, going back to \eqref{ineq5.11}, we conclude in view of \eqref
{young-ineq-random-time-Yf}, \eqref{young-ineq-random-time-YR}, and
\eqref{estimate-sup-Yn-random-time} that estimate \eqref
{estimate-solution-random-terminal-time} holds for
$(Y^n,Z^n,V^n,M^n)$.

{\bf Step 2.} Let us show that $(Y^n,Z^n,V^n,M^n)$ is a Cauchy sequence.
For $m > n$, define
\[
\bar Y_t = Y^m_t - Y^n_t,
\quad\bar Z_t = Z^m_t - Z^n_t,
\quad\bar V_t = V^m_t - V^n_t,
\quad\bar M_t = M^m_t - M^n_t.
\]
For $n \leq t \leq m$, we have
\begin{align*}
\bar Y_t & = \int_{t\wedge\tau}^{m\wedge\tau} f \bigl(s,Y^m_s,Z^m_s,V^m_s \bigr) ds - \int_{t\wedge\tau}^{m\wedge\tau} \bar Z_s dW_s             \\[-2pt]
         & \quad - \int_{t\wedge\tau}^{m\wedge\tau} \int _U \bar V_s(e) \widehat\pi(du,ds) - \bar M_{m\wedge\tau} + \bar M_{t\wedge \tau}.
\end{align*}
\endgroup
Consequently, again for $n \leq t \leq m$,
{\allowdisplaybreaks
\begin{align*}
& e^{p\rho(t\wedge\tau)}| \bar{Y}_{t\wedge\tau}|^p +\alpha_p\int_{t\wedge\t}^{m\wedge\t}e^{p\rho s} | \bar{Y}_s|^{p-2} |\bar {Z}_s|^2 ds\nn                                                        \\
& \qquad +\alpha_p\int_{t\wedge\t}^{m\wedge\t}e^{p\rho s} \int_U|\bar {Y}_s|^{p-2}\big| \bar{V}_s(e)\big|^2\pi(de,ds)+\alpha_p\int _{t\wedge\t }^{m\wedge\t}e^{p\rho s}|\bar{Y}_{s-}|d[ \bar{M}]_s\nn             \\
& \quad \le  p\int_{t\wedge\t}^{m\wedge\t}e^{\rho s} \bigl[ \bar{Y}_s | \bar{Y}_s|^{p-2}f \bigl(s,Y^m_s,Z^m_s,V^m_s \bigr)-\rho|\bar{Y}_s|^p \bigr]ds\nn                                                      \\
& \qquad - p \int_{t\wedge\t}^{m\wedge\t}e^{\rho s} \bar{Y}_{s}|\bar {Y}_{s}|^{p-2} \bar{Z}_s dW_s- p \int_{t\wedge\t}^{m\wedge\t }e^{\rho s} \bar{Y}_{s-} |\bar{Y}_{s-}|^{p-2} d \bar{M}_s               \\
& \qquad - p \int_{t\wedge\t}^{m\wedge\t} \int_U e^{\rho s} \bar {Y}_{s-} |\bar{Y}_{s-}|^{p-2} \bar{V}_s(e)\,\widehat{\pi}(de,ds) \nn                                                                     \\
& \quad \le p\int_{t\wedge\t}^{m\wedge\t}e^{\rho s} \bigl[\mu| \bar {Y}_s|^{p}+L|\bar{Y}_s|^{p-1}| \bar{Z}_s|+L|\bar{Y}_s|^{p-1}\big\|\bar {V}_s(e)\big\|_{\cL^2_\lambda}-\rho|\bar{Y}_s|^p \bigr]ds                 \\[-1pt]
& \qquad + p\int_{t\wedge\t}^{m\wedge\t}e^{\rho s} \bar{Y}_s | \bar {Y}_s|^{p-2}f(s, \xi_s,\eta_s,\gamma_s) ds- p \int _{t\wedge\t }^{m\wedge\t}e^{\rho s} \bar{Y}_{s}| \bar{Y}_{s}|^{p-2} \bar{Z}_s dW_s \\[-1pt]
& \qquad - p \int_{t\wedge\t}^{m\wedge\t}e^{\rho s} \bar{Y}_{s-} |\bar {Y}_{s-}|^{p-2} d \bar{M}_s                                                                                                        \\[-1pt]
& \qquad - p \int_{t\wedge\t}^{m\wedge\t} \int_U e^{\rho s} \bar {Y}_{s-} |\bar{Y}_{s-}|^{p-2} \bar{V}_s(e)\,\widehat{\pi}(de,ds) \nn.
\end{align*}}%
We deduce by already used arguments that
\begingroup
\abovedisplayskip=3pt
\belowdisplayskip=3pt
\begin{align*}
 & \E \Biggl[\Sup_{n\le t\le m}e^{p\rho(t\wedge\tau)}| \bar {Y}_{t\wedge\tau}|^p+\int_{n\wedge\t}^{m\wedge\t}e^{p\rho s} | \bar{Y}_s|^{p} ds +\int_{n\wedge\t}^{m\wedge\t}e^{p\rho s} | \bar{Y}_s|^{p-2} |\bar{Z}_s|^2ds \nn \\[-1pt]
 & \qquad +\int_{n\wedge\t}^{m\wedge\t}e^{p\rho s} \int _U|\bar{Y}_s|^{p-2}\big\| \bar{V}_s(e)\big\|_{\cL^2_\lambda}^2 ds+\int _{n\wedge\t}^{m\wedge\t}e^{p\rho s}|\bar{Y}_{s-}|d[ \bar {M}]_s \Biggr]\nn                       \\[-1pt]
 & \quad \le C\E \Biggl(\int_{n\wedge\t}^{\t}e^{\rho s}\big|f(s, \xi_s,\eta _s,\gamma_s)\big| ds \Biggr)^p,
\end{align*}
and the last term tends to zero as $n\rightarrow\infty$.

Next, for $t \leq n$,
\begin{align*}
\bar Y_t & = \bar Y_n+\int_{t\wedge\tau}^{n\wedge\tau} \bigl(f\bigl(s,Y^m_s,Z^m_s,V^m_s \bigr)-f\bigl(s,Y^n_s,Z^n_s,V^n_s \bigr) \bigr) ds - \int_{t\wedge\tau}^{n\wedge\tau} \bar Z_s dW_s \\
         & \quad - \int_{t\wedge\tau}^{n\wedge\tau} \int _U \widehat V_s(e) \widehat\pi(de,ds) - \bar M_{n\wedge\tau} + \bar M_{t\wedge \tau}.
\end{align*}
Arguing as in Proposition~\ref{prop-uniqueness-random-time}, we get
\begin{align}
\nn \E e^{p\rho(t\wedge\tau)}| \bar{Y}_{t\wedge\tau}|^p\E\int _0^\t e^{p\rho s}|\bar{Y}_s|^p  ds & \le \E e^{p\rho(n\wedge\tau)}| \bar {Y}_{n}|^p                                                 \\[-2pt]
& \le C\E \Biggl(\int_{n\wedge\t}^{\t}e^{\rho s}\big|f(s, \xi_s,\eta _s,\gamma_s)\big| ds \Biggr)^p,\nn
\end{align}
and letting $n\rightarrow\infty$, the convergence of the sequence
$Y^n$ follows.

Next, it remains to show the convergence of the martingale part
$(Z^n,V^n,M^n)$. We follow the proof of Lemma~\ref{Lp-estimates}. We
apply It\^o's formula to $e^{2\rho s}|\bar Y_s|^2$ for $n\le t\le m$:
\begin{align*}
&\int_{t\wedge\t}^{m\wedge\t}e^{2\rho s}|\bar{Z}_s|^2 ds+\int_{t\wedge\t}^{m\wedge\t}\int_Ue^{2\rho s}\big|\bar{V}_{s}(e)\big|^2\pi (de,ds)+\int_{t\wedge\t}^{m\wedge\t}e^{2\rho s}d[\bar{M}]_s\nn\\[-1pt]
&\quad =2\int_{t\wedge\t}^{m\wedge\t}e^{2\rho s} \bigl[\bar Y_{s} \bigl(f\bigl(s,Y^m_s,Z^m_s,V^m_s\bigr)-f(s,\xi_s,\eta_s,\gamma_s) \bigr)-\rho |\bar Y_s|^2 \bigr] ds\\[-1pt]
&\qquad +2\int_{t\wedge\t}^{m\wedge\t}e^{2\rho s}\bar Y_{s}f(s,\xi _s,\eta_s,\gamma_s) ds-2\int_{0}^{\t_n}e^{2\rho s}\bar Y_{s}\bar Z_s dW_s\\[-1pt]
&\qquad -2\int_{t\wedge\t}^{m\wedge\t}\int_Ue^{2\rho s}\bar {Y}_{s-}\bar{V}_{s}(e)\widehat{\pi}(de,ds)-2\int_{t\wedge\t}^{m\wedge\t}e^{2\rho s}\bar{Y}_{s-} d\bar{M}_s .
\end{align*}
\endgroup
Mimicking the same argumentation used to obtain \eqref
{gbsde-ito2-Lp-pgreaterthan2-T} (assumptions on $f$ and BDG and Young
inequalities) leads to
\begin{align}
 &\E \Biggl[ \Biggl(\int_{n\wedge\t}^{m\wedge\t}e^{2\rho s}| \bar Z_s|^2 ds \Biggr)^\frac{p} {2}+ \Biggl( \int_{n\wedge\t}^{m\wedge\t }\int_Ue^{2\rho s}\big| \bar V_{s}(e)\big|^2 \lambda(de)ds \Biggr)^\frac {p} {2}\nn \\
 &\qquad + \Biggl(\int_{n\wedge\t}^{m\wedge\t}e^{2\rho s}d[ \bar M]_s \Biggr)^\frac{p} {2} \Biggr]\nn                                                                                                         \\
 &\quad \le C(p,T) \Biggl[\E\Sup_{t\ge n}e^{p\rho s}|\bar Y_{t}|^p+\E \Biggl(\int_{n\wedge\t}^{\t}e^{\rho s}\big|f(s, \xi_s,\eta_s,\gamma_s)\big| ds \Biggr)^p \Biggr].
\end{align}
Next, arguing similarly for the case $t\le n$, we get
\begin{align}
 & \E \Biggl[ \Biggl(\int_{0}^{n\wedge\t}e^{2\rho s}| \bar Z_s|^2 ds \Biggr)^\frac{p} {2}+ \Biggl( \int_{0}^{n\wedge\t}\int_Ue^{2\rho s}\big| \bar V_{s}(e)\big|^2 \lambda(de)ds \Biggr)^\frac{p} {2}\nn \\
 & \qquad + \Biggl(\int_{0}^{n\wedge\t}e^{2\rho s}d[ \bar M]_s \Biggr)^\frac{p} {2} \Biggr]\nn                                                                                                \\
 & \quad \le C(p,T) \bigl[\E\Sup_{t\ge n}e^{p\rho n\wedge\t}|\bar Y_{n\wedge\t}|^p \bigr].
\end{align}
Consequently, letting $n\rightarrow\infty$, we deduce from the above
that, in both cases, the sequence $(Z^n,V^n,M^n)$ is a Cauchy sequence
for the norm
\begin{equation}
\nn \E \Biggl(\int_{0}^{\t}e^{2\rho s}|\bar
Z_s|^2 ds \Biggr)^\frac {p} {2}+\E \Biggl(\int
_{0}^{\t}\int_Ue^{2\rho s}\big|
\bar V_{s}(e)\big|^2 \lambda(de)ds \Biggr)^\frac{p}
{2}
\xch{+\E \Biggl(\int_{0}^{\t}e^{2\rho s}d[\bar M]_s \Biggr)^\frac{p} {2},}{+\E \Biggl(\int_{0}^{\t}e^{2\rho s}d[\bar M]_s \Biggr)^\frac{p} {2}}
\end{equation}
and thus it converges to $(Z,V,M)$. Finally, the limit $(Y,Z,V,M)$ is a
solution of GBSDE \eqref{GBSDE-jumps-random-time} that satisfies
estimates \eqref{estimate-solution-random-terminal-time} and \eqref
{estimate-solution-random-terminal-time-2}.
\end{proof}

\bibliographystyle{vmsta-mathphys}
%

\end{document}